\numberwithin{equation}{section}
\newcommand{\Ker}{\mathrm{Ker}}
\newcommand{\im}{\mathrm{Im}\,}
\newcommand{\Pic}{\mathrm{Pic}}
\newcommand{\Ext}{\mathrm{Ext}}
\newcommand{\rk}{\mathrm{rk}}
\newcommand{\codim}{\mathrm{codim}}
\newcommand{\comma}{\raisebox{0.5ex}{,}}
\newcommand{\Cliff}{\operatorname{Cliff}}
\newcommand{\red}{\operatorname{red}}
\newcommand{\Coker}{\operatorname{Coker}}
\newtheorem{theorem}{{\textbf Theorem}}[section]
\newtheorem{proposition}[theorem]{{\textbf Proposition}}
\newtheorem{corollary}[theorem]{{\textbf Corollary}}
\newtheorem{lemma}[theorem]{{\textbf Lemma}}
\newtheorem{defit}[theorem]{{\textbf Definition}}
\newtheorem{Properties}[theorem]{{\textbf Properties}}
\newtheorem{conjecture}[theorem]{{\textbf Conjecture}}
\newtheorem{remit}[theorem]{{\textbf Remark}}
\newtheorem{convention}{{\textbf Convention}}
\newenvironment{remark}{\begin{remit}\rm}{\end{remit}}
\newenvironment{definition}{\begin{defit}\rm}{\end{defit}}
\title[Linear Stability and rank two Clifford Indices 
 ]{  Linear Stability and rank two Clifford Indices of Algebraic Curves with applications }
\author{Ali Bajravani}
\address{A.\ Bajravani: Institut für Mathematik, Humboldt Universität zu Berlin, Germany\\
	Department of Mathematics, Azarbaijan Shahid Madani University, Tabriz, I.\ R.\ Iran, P.\ O.\ box 53751-71379}
\email{ali.bajravani@hu-berlin.de; bajravani@azaruniv.ac.ir}
\address{A.\ Ortega: Institut für Mathematik, Humboldt Universität zu Berlin,\\
	Unter den Linden 6, D-10099 Berlin, Germany. 
}
\email{ortega@math.hu-berlin.de}
\author{Angela\ Ortega}
\keywords{Butler's Conjecture, coherent system, Lazarsfeld--Mukai bundle, linear stability}
\subjclass[2010]{14H45, 14H51, 14H60}
\begin{document}

\begin{abstract}
We prove that any vector bundle computing the rank-two Clifford index of a smooth projective algebraic curve is linearly semistable. We also identify conditions under which such bundles become linearly stable, thereby addressing a question posed by A. Castorena, G. H. Hitching, and E. Luna in the rank-two case.
Furthermore, we demonstrate that, in certain special cases, this property is equivalent to the (semi)stability of the associated Lazarsfeld–Mukai bundles. This yields a positive solution, in specific cases, to a generalized version of a conjecture proposed by Mistretta and Stoppino. 
We also study the moduli space $S_0(n, d, 5)$ of 
generated $\alpha$-stable coherent systems of type (n,d,5) for small values of $\alpha$ and n=2,3. We show that a general element of an irreducible component $X \subseteq S_0(2, d, 5)$ or $X \subseteq S_0(3, d, 5)$ is linearly stable 
whenever $2\delta_2\leq d\leq \frac{3g}{2}$.  As 
an application of this,  we prove 
that Butler's conjecture holds non-trivially for $S_0(2,d,5)$ within the given range for $d$.
\end{abstract}

\maketitle

\vspace{-0.4cm}

\section{Introduction}

Let $X$ be an irreducible nondegenerate projective variety and let $L\rightarrow X$ be a globally generated line bundle of degree
$d$. We denote by $\psi_L: X \rightarrow \mathbb{P}:=\mathbb{P}(H^0(L)^*)$, the morphism induced by $L$.  The reduced degree of $X$, is defined as
$$\red \deg(X):={\deg(\psi_L(X))\over{\codim_{\mathbb{P}}X+1}}\cdot$$
Equivalently, this can be expressed as 
$$\red \deg(X)=\frac{\deg L}{h^0(L)-\dim X}\comma    $$      
The invariant $\red \deg(X)$ plays a significant role in the study of the geometry of $X$; it satisfies the inequality
 $$\red \deg(X)\geq 1, $$
 and a classical result of Eisenbud and Harris asserts the following:  if $X\subset \mathbb{P}^n$ is smooth and is not contained in any hyperplane $\mathbb{P}^{n-1}\subset \mathbb{P}^n$, then $\red \deg(X)$ attains its minimum value if and only if $X$ is either a rational normal scroll or the Veronese surface in $\mathbb{P}^5$ (see \cite{EH}).
 
Mumford introduced in \cite{Mum} the notion of linear stability for projective varieties, formulated as a property of the linear system $L$ embedding a variety $X \subset \mathbb{P}(H^0(L)^*)$. Accordingly, a variety $X$ of dimension  $r$ is called linearly stable (respectively, linearly semistable) if, for all subspaces $W\subset H^0(L)$ such that the image of
 the projection  
 $$\pi_W: \mathbb{P}(H^0(L)^*) \rightarrow \mathbb{P}(W^*)$$
 induced by $W$ has dimension $r$,  the following inequality holds: 
$$\red \deg(\pi_W(X)) > \red \deg (X) \quad  (\mbox{respectively}, \quad \!\!\!\!\!\geq). $$

Recently in \cite{CHE} has been proposed a definition extending the notion of linear (semi)stability to higher rank for generated coherent systems.

Let $E$ be a vector bundle over a smooth projective curve $C$, generated by a subspace of sections $V \subseteq H^0(C,L)$; that is the evaluation map 
$V \rightarrow E_{|x}$ is surjective for every point $x\in C$. Note that $\dim V > \rk E$, whenever $E$ is non-trivial. In this context, the linear slope of $E$ with respect to $V$ is defined as
$$\lambda(E, V):=\frac{\deg E}{\dim V-\rk E}\cdot
$$
When $V=H^0(L)$ we write $\lambda(E)$ for brevity.

A globally generated coherent system $(E, V)$ of type
$(n,d,k)$; that is, a pair consisting of a vector bundle $E$ of rank $n$ and degree $d$, generated by a subspace $V\subseteq H^0(E)$ with $\dim V=k$; is said to be linearly (semi)stable if, for every generated subsystem $(F, W)$, the inequality
$$\lambda(E, V) < \lambda(F, W) \quad  (\mbox{respectively}, \quad \!\!\!\!\!\leq)
$$
holds.

It follows directly from the Riemann–Roch and Clifford Theorems that the canonical bundle $K_C$ of a smooth curve $C$ is linearly semistable. In \cite{BS}, M. A. Barja and L. Stoppino showed that if $V<H^0(K_C)$ is a general subspace of codimension $c\leq \frac{\Cliff(C)}{2}$, then $(K_C, V)$ is linearly stable. 

Recall that among line bundles $L$ of fixed degree $d$ satisfying $h^0(L), h^1(L)\geq 2$, those computing the $\Cliff(C)$ attain the maximum number of sections. As a result, the linear slope 
$\lambda$ associated with such line bundles, is expected to be minimal, suggesting that these line bundles should be linearly (semi)stable.  
This expectation was confirmed in \cite[Proposition 3.3]{MS}, where the authors established the linear semistability of such line bundles. 
The proof is based on a comparison between the Clifford index of $L$ and the Clifford indices of its generated subbundles.

Since slope semistability is fundamental in the definition of higher rank Clifford indices (originally introduced by Lange and Newstead) the argument used in the case of line bundles does not directly generalize to higher rank. Nevertheless, we prove that vector bundles computing the rank-two Clifford index of the curve $C$ are linearly semistable.

We begin by observing that for any line subbundle $L$ of a generated semistable and rank $n$ vector bundle $E$, generated by a subspace $V\subseteq H^0(E)$ and satisfying a certain numerical condition, one can bound the number of sections of $V$ in $L$ in terms of $\dim V$ and the rank of  $E$. 
As a consequence, we show that the linear slope of any globally generated invertible subsheaf of $E$ is greater than $\lambda(E)$.
Then we prove that the Clifford index of a globally generated (not necessarily semistable) rank-two vector bundle can be effectively compared with $\Cliff(C)$ under certain additional assumptions (see Lemma \ref{LNlem}). These observations allow us to establish the following theorem, which addresses Question $5$ in \cite{CHE} in the rank-two case.

\begin{theorem}\label{introthm} (\textit{Theorem \ref{indicecomputingthm1}, Theorem \ref{indicecomputingthm0}})
Let $C$ be a non-hyperelliptic curve and
let $E$ be a vector bundle computing $\Cliff_2(C)$, satisfying $\mu(E)\leq g-1$. Then $E$ is linearly semistable. Moreover, $E$ fails to be linearly stable if and only if one of the following conditions holds:\\
(i) $E$ contains a globally generated line subbundle $L$ with $h^0(L)=\frac{h^0(E)}{2}$, or \\
(ii) $E$ contains a rank-two globally generated locally free subsheaf $T$ such that $\Cliff(T)=\Cliff_2(C)$.
\end{theorem}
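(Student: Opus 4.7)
The plan is to verify the linear semistability inequality $\lambda(F,W)\ge\lambda(E)$ for every proper generated subsystem $(F,W)\subsetneq (E,H^0(E))$, and then to identify exactly when equality can occur. Because $E$ has rank two and computes $\Cliff_2(C)$ it is slope-semistable, so the candidate destabilizing subsystems split into the line-subbundle case $F=L$ and the rank-two subsheaf case $F=T$; in each case it is enough to take $W=H^0(F)\cap H^0(E)$ maximal, since shrinking $W$ only raises $\lambda(F,W)$.

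For a line subbundle $L$, semistability of $E$ together with the hypothesis $\mu(E)\le g-1$ keeps $\deg L$ in the Clifford range, so $\Cliff(L)\ge \Cliff(C)$. Substituting the resulting bound $h^0(L)\le (\deg L-\Cliff(C))/2+1$ into $\lambda(L,W)=\deg L/(\dim W-1)$ and comparing with $\lambda(E)=\deg E/(h^0(E)-2)$ --- using the identity $\deg E=2(h^0(E)-2)+2\Cliff_2(C)$ coming from $E$ computing $\Cliff_2(C)$, together with $\Cliff_2(C)\le\Cliff(C)$ --- yields $\lambda(L,W)\ge\lambda(E)$. Tracing back the equality case in each of the two ingredient estimates forces $h^0(L)=h^0(E)/2$, namely condition (i).

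For the rank-two case, the subcase $T=E$ gives $\lambda(E,W)>\lambda(E)$ strictly as soon as $W\subsetneq H^0(E)$, so we may assume $T\subsetneq E$. Lemma~\ref{LNlem} then yields $\Cliff(T)\ge \Cliff_2(C)=\Cliff(E)$. Setting $a=\deg T$, $b=h^0(T)-2$, $A=\deg E$, $B=h^0(E)-2$, and using $a\le A$ together with $b\le B$ (the latter coming from the injection $H^0(T)\hookrightarrow H^0(E)$), the Clifford inequality rewrites as $A-a\le 2(B-b)$, while the target slope inequality $a/b\ge A/B$ is equivalent to $(A-a)/(B-b)\le a/b$; this follows at once from $a/b=\lambda(T)\ge 2$, i.e.\ $\Cliff(T)\ge 0$, which is guaranteed by $\Cliff_2(C)\ge 0$. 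Equality characterizes condition (ii). For the converse direction, in each of situations (i) and (ii) the natural subsystem $(L,H^0(L))$, respectively $(T,H^0(T))$, realizes $\lambda(F)=\lambda(E)$ by retracing the equality computation, so $E$ fails to be linearly stable. The main technical obstacle I foresee is in the rank-two case: matching the Clifford-type bound of Lemma~\ref{LNlem} to the slope comparison and securing $\Cliff(T)\ge 0$ under the sole hypothesis $\mu(E)\le g-1$.
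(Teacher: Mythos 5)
Your treatment of line subbundles is essentially the paper's own route (Lemma \ref{Newsteadlem1} and Corollary \ref{indicecomputingcor}): the fact that a nontrivial generated $L\subset E$ contributes to $\Cliff(C)$, the identity $d_E=2(h^0(E)-2)+2\Cliff_2(C)$, and the semistability bound $d_L\le d_E/2$ combine to give $\lambda(L)\ge\lambda(E)$, with equality forcing $h^0(L)=h^0(E)/2$. That half is fine.

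The rank-two case has a genuine gap: you invoke Lemma \ref{LNlem} for an arbitrary proper rank-two subsheaf $T\subsetneq E$, but that lemma assumes $T$ is \emph{not} semistable and admits \emph{no trivial quotient line bundle}, and both hypotheses matter. If $T$ is non-semistable but has a trivial quotient, the conclusion $\Cliff(T)\ge\Cliff(C)$ is simply false --- Remark \ref{remark2} exhibits $T=L\oplus\mathcal{O}_C$ with $L\in W^2_t$ and $\Cliff(T)<\Cliff(C)=\Cliff_2(C)$ --- so your chain $\Cliff(T)\ge\Cliff_2(C)\Rightarrow\lambda(T)\ge\lambda(E)$ breaks; the paper instead passes to the line subbundle $L$ in the extension $0\to L\to T\to\mathcal{O}_C\to 0$ and reuses the rank-one estimate (Proposition \ref{indicecomputinglem1}). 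If $T$ is semistable, Lemma \ref{LNlem} says nothing: for $h^0(T)\ge 4$ one gets $\Cliff(T)\ge\Cliff_2(C)$ directly from the definition of $\Cliff_2(C)$ as an infimum over semistable bundles, but for $h^0(T)=3$ the bundle $T$ does not contribute to $\Cliff_2(C)$ at all, and the comparison must instead go through $h^0(\det T)\ge 2$, hence $d_T\ge\delta_1$, against the bound $d_E\le 2h^0(E)+2\delta_1-8$. This is precisely where the dichotomy $h^0(E)\ge 5$ versus $h^0(E)=4$ (Theorems \ref{indicecomputingthm1} and \ref{indicecomputingthm0}) enters and where the extra equality cases for $h^0(E)=4$ originate. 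Since your equality analysis rests on the (partly invalid) Clifford inequality for $T$, the characterization of when linear stability fails is also incomplete as written.
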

Our approach to proving Theorem \ref{introthm} differs from the method proposed in \cite{CHE}. Since Lemma \ref{LNlem} appears unlikely to generalize to higher ranks, determining whether vector bundles computing
$\Cliff_n(C)$ are linearly semistable for
$n\geq 3$ remains an open challenge.\\

On the other hand, the slope (semi)stability of the Lazarsfeld–Mukai bundle associated with a generated coherent system $(E, V)$, namely the kernel bundle of the surjective evaluation morphism 
$$\phi_{E, V}: V\otimes \mathcal{O}_C \rightarrow E$$
 denoted by $M_{E, V}$, implies the linear (semi)stability of $(E, V)$;
but the converse does not hold in general. Various counterexamples are known, see for instance \cite[Theorem 1.1]{CMT}. However, there are cases in which linear (semi)stability does imply the (semi)stability of the corresponding Lazarsfeld–Mukai bundle. In this direction, Castorena and Torres López showed in \cite{CT} that linear (semi)stability implies (semi)stability for any generated line bundle on a general curve.
  
For arbitrary curves, Mistretta--Stoppino formulated a pioneering conjecture asserting that
if $(L, V)$ is a $g^r_d$, that is a coherent system of type $(1, d, r+1)$, satisfying
$$d-2r\leq \Cliff(C),$$
then the linear (semi)stability of $(L, V)$ implies the (semi)stability of the associated Lazarsfeld--Mukai bundle $M_{L, V}$ (\cite[Conjecture 6.1]{MS},  Conjecture \ref{MSconjecture}). They verified the conjecture in various cases, in particular the case $V=H^0(L)$, thereby concluding the (semi)stability of $M_L$ for line bundles computing $\Cliff(C)$. 

We propose a higher rank version of  
Mistretta--Stoppino's Conjecture
(see \ref{MSBconjecture} (ii))
 and verify it in certain cases for rank $n=2$. The first key idea in our proof is that 
 the so-called Butler diagram associated to $E$ can be related to the Butler diagrams of $L\subset E$ with $h^0(L)=2$, and that one  of its corresponding quotient bundle  (Proposition \ref{diagramlem}). The second crucial point is that, whenever $h^0(L)\geq 2$, one can compare the linear slopes of $L$, $E$ and $E/L$. Using these two 
observations, we prove
\begin{theorem}\label{introthm2} (\textit{Theorem \ref{MSconjecturerank=2}, Theorem 
\ref{h0_less_than 6}})
Let $E$ be a vector bundle computing $\Cliff_2(C)$ and suppose that either $E$ admits a line subbundle $L$ with $h^0(L)=2$ or $h^0(L)\leq 6$. Then $E$ is linearly (semi)stable if and only if the associated Lazarsfeld--Mukai bundle $M_E$ is (semi)stable.
\end{theorem}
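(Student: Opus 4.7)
The plan is to treat both directions. The implication from (semi)stability of $M_E$ to linear (semi)stability of $(E, H^0(E))$ is the standard one: from the defining Butler sequence
\begin{equation*}
0 \to M_E \to H^0(E) \otimes \mathcal{O}_C \to E \to 0,
\end{equation*}
every generated coherent subsystem $(F, W) \subseteq (E, H^0(E))$ produces a saturated subbundle $M_{F, W} \hookrightarrow M_E$ with $\mu(M_{F, W}) = -\lambda(F, W)$ and $\mu(M_E) = -\lambda(E)$, so destabilizing $(F, W)$ destabilizes $M_E$. I would dispose of this direction in a paragraph. The substance lies in the converse.

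For the converse in the first case, the strategy is argument by contradiction: assume $E$ is linearly (semi)stable, $L \subset E$ is a generated line subbundle with $h^0(L) = 2$, and suppose $M_E$ is destabilized by some subbundle $N \subset M_E$. Invoking Proposition \ref{diagramlem} to connect the Butler diagram of $E$ with those of $L$ and of the quotient $E/L$, I would aim for a short exact sequence of the form
\begin{equation*}
0 \to M_{L} \to M_E \to M_{E/L, V''} \to 0,
\end{equation*}
where $V''$ is the image of $H^0(E)$ in $H^0(E/L)$. Under the hypothesis that $E$ computes $\Cliff_2(C)$, together with $h^0(L) = 2$, the line bundle $L$ necessarily computes $\Cliff(C)$, so the line bundle case of the Mistretta--Stoppino conjecture (already established in \cite{MS}) yields (semi)stability of $M_L$. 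The same rank-one result applied to a suitable generated subsystem of $E/L$ controls $M_{E/L, V''}$. The linear-slope comparison between $L$, $E$ and $E/L$ that is available when $h^0(L) \geq 2$ then converts the destabilizing data of $N$, split as $N \cap M_L$ and its image in $M_{E/L, V''}$, into numerical inequalities that contradict either linear (semi)stability of $E$ or the already-established (semi)stability of $M_L$ and $M_{E/L, V''}$.

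For the second case $h^0(E) \leq 6$, the plan is to reduce to the first. The combined constraints that $E$ has rank two, computes $\Cliff_2(C)$, and satisfies $h^0(E) \leq 6$ restrict the numerical type of $E$ to a finite list. For each configuration I would show, using Clifford's theorem applied to generated line subbundles and their quotients together with a pigeonhole counting of sections, that $E$ must admit a generated line subbundle $L$ with $h^0(L) \geq 2$; in such a case either the first part applies directly, or, when $h^0(L) > 2$, the linear slope comparison still forces the desired conclusion via the same short exact sequence argument. The few residual configurations, in which every generated line subbundle has $h^0 = 1$, can be analyzed by hand using the classification of small-section bundles on the relevant curves.

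I expect the main obstacle to be the exact sequence extracted from Proposition \ref{diagramlem}: in general, the induced map $H^0(E) \to H^0(E/L)$ need not be surjective onto a subspace that generates $E/L$, and the quotient $M_E / M_L$ need not coincide with $M_{E/L, V''}$. Ensuring that these coincidences hold under the Clifford-index hypothesis, and tracking the semistable versus stable dichotomy carefully across the two ends of the sequence so that strict inequalities propagate, is where I anticipate the delicate work to lie.
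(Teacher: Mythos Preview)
Your sketch for the subpencil case is in the right spirit but contains a genuine gap in the situation $N\cap M_L=0$. Writing $G=E/L$, the slope comparison $\lambda(L)\geq\lambda(E)\geq\lambda(G)$ translates to $\mu(M_L)\leq\mu(M_E)\leq\mu(M_G)$. Thus when $N$ embeds entirely into $M_G$, the semistability of $M_G$ only yields $\mu(N)\leq\mu(M_G)$, and since $\mu(M_G)\geq\mu(M_E)$ this does \emph{not} preclude $\mu(N)>\mu(M_E)$. The paper does not close this case via the exact sequence at all: instead it uses Proposition~\ref{diagramlem}(ii) to deduce that $\rk(\im\alpha_N)=1$ (the image of $\alpha_N$ sits inside the line bundle $G$), and then invokes Corollary~\ref{rank1cor}, which in turn relies on the Castelnuovo-type Theorem~\ref{specialcase} showing that when $\rk I=1$ the auxiliary sequence $0\to\mathcal{O}^{r-1}\to F_S\to\det F_S\to 0$ is exact on global sections. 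This entire line of argument is absent from your plan, and without it the $N\cap M_L=0$ case does not close. (Your handling of the case $N\cap M_L=M_L$ is essentially the paper's argument and is fine.)

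Your plan for $h^0(E)\leq 6$ has a more serious gap: the proposed reduction to the subpencil case fails. Bundles computing $\Cliff_2(C)$ with $h^0(E)\in\{4,5,6\}$ need not admit any line subbundle $L$ with $h^0(L)\geq 2$; the paper's own Examples section exhibits, for Petri curves of genus~$5$ and for general curves of genus $g\geq 7$, stable $E$ computing $\Cliff_2(C)$ with $h^0(E)=4$ and no subpencil whatsoever. These are not a finite list amenable to ``by hand'' analysis. The paper's actual proof of Theorem~\ref{h0_less_than 6} is entirely different: one takes a maximal-slope subbundle $S\subset M_E$ with $\rk F_S\geq 3$, uses Proposition~\ref{exactglobalsectionsLN} (itself an application of Lemma~\ref{LNlem} to the rank-two quotient $F$ of $F_S$) and Lemma~\ref{bounddimF_S} to bound $h^0(F_S)<h^0(E)$, hence $\dim W\leq 5$ and $\dim W-\rk F_S\in\{1,2\}$. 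Then $S^*$ itself contributes to $\Cliff(C)$ or $\Cliff_2(C)$, and comparing $\Cliff(S^*)\geq\Cliff_2(C)=\Cliff(E)$ with the destabilizing inequality $-\mu(S)\leq d_E/3$ forces $d_E$ to be absurdly small. None of these ingredients appear in your proposal.
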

\vspace{0.2cm}

The concept of linear stability for generated coherent systems is closely related to the Butler conjecture (see \cite{BT},  \cite{CH2024}
 and \cite{MS}). This conjecture predicts that, on a general curve $C$, the Lazarsfeld–Mukai bundle $M_{E, V}$ is semistable 
whenever $(E, V)$ is general $\alpha$-stable coherent system, for small values of  $\alpha$ (Conjecture \ref{Butconjecture}). 
Butler's conjecture is known to hold for rank-one general generated coherent systems on general curves (\cite{BBN} and \cite{FL}). In higher rank,  the conjecture holds for coherent systems $(E, V)$ of type $(2, d, 4)$ in certain range of $d$ (\cite{Ortetal}). In \cite{CH2024}, the authors proved the conjecture for coherent systems of type $(2,d,5)$ under the assumptions that either $d=2\delta_2$, or $d=2\delta_2-1$ and $g \equiv 3 \textrm{ mod } 2$ (see Definition 2.2)
for curves of genus $g\geq 18$. They proved this result by relating the stability of $M_{E, V}$ not only to the 
linear stability of a generated coherent system $(E,V)$
but also to its $\alpha_L$-stability  for sufficiently large $\alpha_L$. 
In contrast, inspired by the ideas in \cite{Ortetal} and relying on Lemma \ref{LNlem}, we take a different approach. Specifically, we analyze the elements of $S_0(2, d, 5)$ and $S_0(3, d, 5)$ from the linear stability point of view and show that either a coherent system $(E, V)$ is linearly stable which, in our setting, is equivalent to the stability of its Lazarsfeld–Mukai bundle, or the locus of systems where this property does not hold, is less than the expected dimension of 
$S_0(2, d, 5)$.  We obtain the following theorem: 
\begin{theorem}\label{Butler}(\textit{Theorem \ref{Butlerfor2d5}})
Suppose $C$ is a general curve and $2\delta_2\leq d\leq \frac{3g}{2}$. Then, the Butler conjecture holds non-trivially for coherent systems of type $(2, d, 5)$.
\end{theorem}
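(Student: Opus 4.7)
The plan is to combine the equivalence between linear (semi)stability and (semi)stability of the Lazarsfeld--Mukai bundle, established in Theorem \ref{introthm2} for bundles computing $\Cliff_2(C)$ with $h^0(E)\leq 6$, with a dimension count on $S_0(2,d,5)$ that rules out the loci on which linear stability can fail. Since a system of type $(2,d,5)$ has $\dim V=5$, the hypothesis $h^0(E)\leq 6$ is automatic; the lower bound $d\geq 2\delta_2$, together with the definition of $\delta_2$ (Definition 2.2 referenced in the introduction), is designed to ensure that a general $(E,V)$ in an irreducible component $X\subseteq S_0(2,d,5)$ has underlying bundle computing $\Cliff_2(C)$, while the upper bound $d\leq 3g/2$ gives $\mu(E)\leq g-1$, placing us inside the hypotheses of Theorem \ref{introthm} and Theorem \ref{introthm2}.

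The heart of the argument is the dichotomy anticipated in the introduction: for the general element $(E,V)$ of an irreducible component $X\subseteq S_0(2,d,5)$, either $(E,V)$ is linearly stable, or the locus $Y\subseteq X$ of non-linearly-stable systems has $\dim Y<\dim X$. In the first case Theorem \ref{introthm2} directly yields the (semi)stability of $M_{E,V}$, which is exactly Butler's conjecture on $X$. In the second case, the generic element of $X$ lies outside $Y$ and is therefore linearly stable, so the same conclusion applies. The non-triviality part of the statement is handled by appealing to the existence results for generated $\alpha$-stable coherent systems of type $(2,d,5)$ used in \cite{CH2024} and \cite{Ortetal}, showing that $S_0(2,d,5)$ is non-empty and of positive expected dimension throughout the range $2\delta_2\leq d\leq 3g/2$.

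The main obstacle I expect is the dimension estimate for $Y$. Each destabilizing configuration identified in Theorem \ref{introthm} (a globally generated line subbundle $L\subset E$ with $h^0(L)=h^0(E)/2$, or a rank-two globally generated subsheaf $T\subset E$ with $\Cliff(T)=\Cliff_2(C)$) gives rise to a locally closed stratum of $X$ whose dimension must be bounded above via a Brill--Noether-type estimate on the general curve $C$, combined with the numerical control on subbundles provided by Lemma \ref{LNlem}, and then compared with the expected dimension of $S_0(2,d,5)$. A secondary difficulty is ensuring that these strata-by-strata dimension bounds hold uniformly in $d$ throughout the full interval $[2\delta_2,3g/2]$ and across every irreducible component of $S_0(2,d,5)$, rather than only for a single degree or a distinguished component.
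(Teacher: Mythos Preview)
Your proposal has two genuine gaps.

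First, you cannot invoke Theorem \ref{introthm2} here. That theorem concerns \emph{complete} coherent systems $(E,H^0(E))$ with $E$ computing $\Cliff_2(C)$. A general $(E,V)\in S_0(2,d,5)$ has $\dim V=5$, but this does not bound $h^0(E)$ from above (so ``$h^0(E)\le 6$ is automatic'' is false), nor is there any reason why $E$ should compute $\Cliff_2(C)$: computing $\Cliff_2(C)$ means achieving the \emph{minimum} of $\Cliff(E)$ over all admissible bundles, and a generic bundle of degree $d>2\delta_2$ will not do so. The paper does not use Theorem \ref{introthm2} at this point; instead it proves directly (Theorem \ref{Butlerconjthrm3}) that for any generated $(E,V)$ of type $(2,d,5)$ in the given range on a general curve, linear (semi)stability of $(E,V)$ is equivalent to (semi)stability of $M_{E,V}$, using only that $C$ is general (via Mercat's theorem $\Cliff_2(C)=\Cliff(C)$) and Lemma \ref{LNlem}. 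The destabilizing configurations you list from Theorem \ref{introthm} are likewise not the right ones: the relevant bad locus is $N_0(2,d,5)$, the locus of systems admitting a rank-one subsystem $(L,W)$ with $\dim W=3$, and the paper bounds its dimension in Lemma \ref{closedness}.

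Second, and more seriously, you omit the $(3,d,5)$ side entirely. Butler's conjecture asserts that $(E,V)\mapsto (M_{E,V}^*,V^*)$ is a birational equivalence between $S_0(2,d,5)$ and $S_0(3,d,5)$; as recalled in \S\ref{butlersection}, this holds if and only if $T(2,d,5)$ is dense in $S_0(2,d,5)$ \emph{and} $T(3,d,5)$ is dense in $S_0(3,d,5)$. Density on the $(2,d,5)$ side alone is not enough, since $S_0(3,d,5)$ could a priori have extra components not hit by the dual map. The paper devotes an entire subsection (Theorems \ref{Butlerconjthrm3-1}, \ref{Butlerconjthrm4-1}, Proposition \ref{prop(3,d,5)}) to the analogous linear-stability and dimension analysis on $S_0(3,d,5)$, and this is where most of the work lies.
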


\noindent \textit{Notations:} Throughout the paper, $C$ will denote a complex projective smooth curve. The canonical line bundle over $C$ will be denoted by $K_C$. For any sheaf $E$ over $C$, we abbreviate $H^i(C, E)$ and $h^0(C, E)$ to $H^i(C)$ and
$h^i(C)$, respectively.

\section{Preliminaries}
\subsubsection{Clifford Indices and the Gonality Sequence}
We begin by recalling the classical Clifford index and the higher Clifford indices of $C$, as introduced by Lange and Newstead in \cite{LNe22}.
\begin{definition}\label{Cliffordindices}
(i) The Clifford index of $C$ is defined to be 
\begin{align}\label{Cliffordindex}
\Cliff(C):=\min\{\Cliff(L): \mbox{$L$ is a line bundle with } h^0(L)\geq 2, \ h^1(L)\geq 2\},
\end{align}
in which $\Cliff(L):=d-2h^0(L)+2$.

\noindent (ii) If $E\rightarrow C$ is a vector bundle of rank $n$ and degree $d$, then 
$$\Cliff_n(C):=\small{\inf\{\Cliff(E): E \!\!\!\!\quad \mbox{is semistable with}\!\!\!\!\quad h^0(E)\geq 2n, \ \mu(E)\leq g-1\}},$$
$$\gamma_n(C):=\small{\inf\{\Cliff(E): E \!\!\!\!\quad \mbox{is semistable with}\!\!\!\!\quad h^0(E)\geq n+1, \ \mu(E)\leq g-1\}},$$
where $\Cliff(E):=\mu(E)-\frac{2}{n}h^0(E)+2.$
\end{definition}
If $L$ is a line bundle computing $\Cliff(C)$, the equality $\Cliff(\oplus^nL)=\Cliff(L)$ shows that $\Cliff_n(C)\leq \Cliff(C)$.
\begin{definition}
The gonality sequence $\{\delta_r\}_{r\geq 1}$ of $C$ is defined as
\begin{align}\label{gonalityseq}
\delta_r:=\min \{ d: \mbox{$C$ admits a $g^r_d$ }
\}.
\end{align}
\end{definition}
As a well-known and useful fact, the following inequalities hold:
\begin{align}\label{Clifgon}
\delta_1-3\leq \Cliff(C)\leq \delta_1-2.
\end{align}
Since $\Cliff_2(C)\leq \Cliff(C)$ it follows that $\Cliff_2(C)\leq \delta_1-2$, however the inequality $\delta_1-3\leq \Cliff_2(C)$ 
does not hold in general. Recall that for a general curve $C$
$$
\delta_r= \left\lceil  \frac{rg}{r+1}+r \right \rceil.
$$

The following Lemma is a crucial observation in our arguments.
\begin{lemma}\label{Newsteadlem}
  Suppose $E$ is a rank $2$ bundle computing $\Cliff_2(C)$ with $\mu(E)\leq g-1$. 
 If $E$ possesses a line subbundle $M$ such that $h^0(M) \geq 2$, then 
$$\Cliff_2(C)=\Cliff(E)=\Cliff(M)=\Cliff \left( E/M\right)=\Cliff(C).$$
Moreover, the following equality holds
$h^0(E)=h^0(M)+h^0\left(E/M \right)$.
\end{lemma}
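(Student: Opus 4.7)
The plan is to analyze the short exact sequence $0 \to M \to E \to N \to 0$ with $N := E/M$, and to combine Clifford's theorem for the line bundles $M$ and $N$ with a comparison via the split bundle $M \oplus M$.

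First, I would observe that semistability of $E$ together with $\mu(E) \leq g-1$ forces $\deg M \leq g-1$, whence $h^1(M) \geq h^0(M) \geq 2$ by Riemann--Roch; Clifford's theorem then gives $\Cliff(M) \geq \Cliff(C)$. Moreover, $M \oplus M$ is rank-two semistable with $h^0 \geq 4$, $\mu \leq g-1$, and $\Cliff(M \oplus M) = \Cliff(M)$, so the definition of $\Cliff_2(C)$ gives $\Cliff(E) = \Cliff_2(C) \leq \Cliff(M)$. A direct calculation from the long exact sequence in cohomology of $0 \to M \to E \to N \to 0$, using $\Cliff(F) = \mu(F) - h^0(F) + 2$ for rank-two $F$, yields the identity
$$\Cliff(E) = \frac{\Cliff(M) + \Cliff(N)}{2} + \alpha, \qquad \alpha := h^0(M) + h^0(N) - h^0(E) \geq 0;$$
in particular $\Cliff(N) \leq 2\Cliff_2(C) - \Cliff(M) \leq \Cliff_2(C) \leq \Cliff(C)$.

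The main obstacle is to verify that $N$ itself meets the Clifford hypotheses $h^0(N) \geq 2$ and $h^1(N) \geq 2$, since neither follows immediately from the assumptions. For the first, if $h^0(N) \leq 1$ the cohomology sequence forces $h^0(E) \leq h^0(M) + 1$, and a direct computation gives $\Cliff(E) - \Cliff(M) \geq \tfrac{1}{2}(\deg N - \deg M) + h^0(M) - 1 > 0$ (using $\deg N \geq \deg M$ from semistability and $h^0(M) \geq 2$), contradicting $\Cliff(E) \leq \Cliff(M)$. For the second, I would argue by contradiction: assuming $h^1(N) = 0$ or $1$, Riemann--Roch gives $\Cliff(N) = 2g - \deg N$ or $2g - 2 - \deg N$ respectively, and combined with $\Cliff(N) \leq \Cliff_2(C)$ this forces $\deg N \geq 2g - 2 - \Cliff_2(C)$; using $\deg M + \deg N \leq 2(g-1)$ one finds $\deg M \leq \Cliff_2(C)$, hence $\Cliff(M) \leq \deg M - 2h^0(M) + 2 \leq \Cliff_2(C) - 2$, contradicting $\Cliff(M) \geq \Cliff_2(C)$.

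Once both Clifford hypotheses for $N$ are secured, $\Cliff(N) \geq \Cliff(C)$, and the identity above gives
$$\Cliff_2(C) = \Cliff(E) \geq \frac{\Cliff(M) + \Cliff(N)}{2} + \alpha \geq \Cliff(C) \geq \Cliff_2(C),$$
forcing equality throughout. Hence $\alpha = 0$, $\Cliff(M) = \Cliff(N) = \Cliff(C) = \Cliff_2(C)$, and $h^0(E) = h^0(M) + h^0(N)$, as required.
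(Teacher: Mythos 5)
Your proof is correct: every step checks out. The identity $\Cliff(E)=\tfrac{1}{2}(\Cliff(M)+\Cliff(N))+\alpha$ with $\alpha=h^0(M)+h^0(N)-h^0(E)\geq 0$ is a correct formal computation, the comparison $\Cliff_2(C)\leq\Cliff(M\oplus M)=\Cliff(M)$ is valid, and both contradiction arguments (ruling out $h^0(N)\leq 1$ via $\Cliff(E)-\Cliff(M)\geq\tfrac{1}{2}(\deg N-\deg M)+h^0(M)-1\geq 1$, and ruling out $h^1(N)\leq 1$ via $\deg M\leq\Cliff_2(C)$, hence $\Cliff(M)\leq\Cliff_2(C)-2$) are sound, using only semistability of $E$, $\mu(E)\leq g-1$, and Riemann--Roch. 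The route is, however, genuinely different from the paper's: the paper does not prove this lemma at all, but simply cites Lemma 2.6 of Lange--Newstead (\cite{LNe}), where the corresponding statement is established. Your argument therefore buys a self-contained, elementary proof from first principles within the paper's own framework, at the cost of some length; the only cosmetic quibble is that the inequality $\Cliff(M)\geq\Cliff(C)$ follows from the definition of $\Cliff(C)$ as a minimum over contributing line bundles (Clifford's theorem itself only gives $\Cliff(M)\geq 0$), and is not literally ``Clifford's theorem.''
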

\begin{proof}
Given that $E$ computes $\Cliff_2(C)$, this follows directly from \cite[Lemma 2.6]{LNe}.	
\end{proof}

\subsubsection{Lazarsfeld--Mukai Bundles} 
Recall that a coherent system of type $(n, d, n+m)$ on $C$ is a pair $(E, V)$ where $E$ is a vector bundle of rank $n$ and degree $d$ and $V$ is a $(n+m)$-dimensional subspace of  $H^0(E)$. A coherent system $(E, V)$ is called complete if $V=H^0(E)$ and non-complete otherwise. It is called globally generated if the evaluation morphism 
\begin{align}\label{evaluemorphism}
    \phi_{E, V}: V\otimes \mathcal{O}_C \rightarrow E\end{align} 
is surjective.

If $(E, V)$ is a globally generated coherent system, then the kernel $M_{V,E}$ 
of $\phi_{E, V}$ in the exact sequence 
\begin{align}
	0 \rightarrow M_{E,V} \rightarrow V\otimes\mathcal{O}_C \stackrel{\phi_{E, V}}{\longrightarrow} E\rightarrow 0,  
\end{align}
is called the Lazarsfeld--Mukai bundle of $(E, V)$.
 We abbreviate $M_E$ for $M_{E, V}$ when $V=H^0(E)$, and we refer to it as the Lazarsfeld--Mukai bundle of $E$.

\subsubsection{Butler Diagram}
For a globally generated coherent system $(E, V)$ and for a subbundle $S\subseteq M_{E, V}$, there exists a diagram: 

{\small
\begin{align}\label{Butlerdiagram}
\xymatrix{& 0   \ar[d]& 0\ar[d] &\\
 0\ar[r]&S\ar[d] \ar[r],&W\otimes \mathcal{O}_C\ar[d]\ar[r] &F_S \ar[r]\ar[d]^{\alpha_S}&0\\
0\ar[r] &M_{E, V}\ar[r]& V\otimes \mathcal{O}_C\ar[r]& E\ar[r]&0.
}
\end{align}
}
We refer to Diagram \eqref{Butlerdiagram} as the {\it Butler diagram} of $(E, V, S)$, and abbreviate it to the Butler diagram of $(E, S)$
 when $V=H^0(E)$.

The vector subspace $W \subset V$  is determined as 
$$W^*:=\im (V^*\rightarrow H^0(S^*)).$$
Then, $W^*$ generates $S^*$ and $F_S$ is defined as
$$F_S:=[\ker (W^*\otimes \mathcal{O}_C\rightarrow S^*)]^*.$$ 
Throughout this paper, we set: 
\begin{align}\label{kernelalpha}
I_S:=\im \alpha_S, \quad N_S:=\ker \alpha_S
\end{align}
We now record some useful properties of the Buttler diagram. 
For further details, see for instance \cite[Remark 2.2]{MS}.

\begin{Properties}\label{Butler_prop}
\mbox{}
\begin{enumerate}
\item The bundle $F_S$ is globally generated by $W\subseteq H^0(E)$,

\item The map $\alpha_S$ is non-zero,

\item $H^0(F_S^*)=0$,

\item If $S$ is assumed to be a destabilizing subbundle of $E$ with maximal slope and $\rk(F_S)>n$, then 
\begin{align}\label{F_sdegreeinequality1}
\deg(F_S)\leq \frac{\dim W-\rk(F_S)}{\dim W-\rk I_S}\cdot \deg(I_S).
\end{align}
If, in addition, $E$ is semistable, then
\begin{align}\label{F_sdegreeinequality2}\deg (E)\geq \deg (I_S)> \deg F_S.\end{align}
 The last inequality is strict because $\rk F_S > \rk I_S$. 
\end{enumerate}
\end{Properties}
Set $Q:=M_E/S$ in Diagram (\ref{Butlerdiagram}). We will also use the following result from \cite[Theorem 1.1(3)]{CT}.
\begin{lemma}\label{Abel}
 If $S$ is stable of maximal slope, then $H^0(Q)=0$.
\end{lemma}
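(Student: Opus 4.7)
I would proceed by contradiction, supposing $H^0(Q)\neq 0$. Two preliminary observations drive the argument. First, the defining sequence $0\to M_E\to V\otimes\Oc\to E\to 0$ (with $V=H^0(E)$) gives $H^0(M_E)=0$ upon taking cohomology, and consequently $H^0(S)=0$ from the subbundle inclusion $S\subseteq M_E$. Second, dualizing that same sequence yields
\[
0\to E^*\to V^*\otimes\Oc\to M_E^*\to 0,
\]
so $M_E^*$ is globally generated; dualizing $0\to S\to M_E\to Q\to 0$ then exhibits $S^*$ as a quotient of $M_E^*$, so $S^*$ is globally generated as well.

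A nonzero section of $Q$ defines an injection $\Oc\hookrightarrow Q$; since $Q$ is locally free on the curve, the saturation of the image is a line subbundle $L\subseteq Q$ with $\deg L\geq 0$ and $h^0(L)\geq 1$. Pulling $L$ back along $M_E\twoheadrightarrow Q$ produces a saturated rank-$(\rk S+1)$ subbundle $S''\subseteq M_E$ sitting in
\[
0\to S\to S''\to L\to 0.
\]
The maximal-slope hypothesis on $S$ forces $\mu(S'')\leq\mu(S)$, which after clearing denominators becomes $\rk S\cdot\deg L\leq\deg S$. In particular $\deg S\geq 0$, so $\deg S^*\leq 0$.

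Now the stability of $S$ (equivalently, of $S^*$) completes the argument. Since $S^*$ is globally generated and has positive rank, it admits a nonzero section whose saturation is a line subbundle $L'\subseteq S^*$ with $\deg L'\geq 0$. If $\rk S\geq 2$, stability of $S^*$ requires $\deg L'<\mu(S^*)\leq 0$, contradicting $\deg L'\geq 0$. If $\rk S=1$, stability is vacuous, but then $S^*$ itself is a globally generated line bundle of nonpositive degree, forcing $S^*=\Oc$ and hence $h^0(S)=1$, contradicting $H^0(S)=0$. The crux is the interplay between the maximal-slope condition (which gives $\deg S\geq 0$) and the global generation of $M_E^*$ (which, combined with stability, forbids $\deg S^*\leq 0$); the only delicate bookkeeping is verifying that $S''$ is genuinely a saturated subbundle of $M_E$, which is guaranteed because $L$ was chosen saturated in $Q$. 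I do not anticipate any serious obstacle beyond correctly keeping track of the dualizations.
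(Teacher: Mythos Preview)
Your argument is correct. The paper does not prove this lemma itself; it simply quotes it from \cite[Theorem~1.1(3)]{CT} (Castorena--Torres-L\'opez). You have supplied a clean self-contained proof: the key steps---$H^0(M_{E,V})=0$ forcing $H^0(S)=0$, global generation of $M_{E,V}^*$ passing to its quotient $S^*$, and the maximal-slope inequality $\mu(S'')\le\mu(S)$ yielding $\deg S\ge 0$---combine exactly as needed, and your case split on $\rk S$ handles the rank-one endpoint correctly. Since the paper outsources the argument entirely, there is no in-paper proof to compare against; your write-up could stand in for the citation.
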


\subsubsection{ Linear Stability} 
For a generated coherent system $(E, V)$ of type $(n, d, k)$ with $d>0$, the linear slope of $(E, V)$,
 denoted by $\lambda(E, V)$, is defined in \cite{CHE} as $$\lambda(E, V):=\frac{d}{k-n}.$$
When $V=H^0(E)$ we abreviate $\lambda(E, V)$ as $\lambda(E)$.

 Recently, A. Castorena, G. H. Hitching, and E. Luna have extended the notion of linear stability to coherent systems, as developed in \cite{CHE} and \cite{CH2024}. We recall the following definition from \cite{CHE}.

\begin{definition}
A generated coherent system $(E, V)$ of type $(n, d, k)$ is called linearly (semi)stable if for each globally generated coherent subsystem $(F, W)$ of $(E, V)$, with $\deg(F)> 0$, we had 
$$\lambda(F, W)(\geq)> \frac{d}{k-n}\cdot$$
\end{definition}
\begin{remark}
If $M$ is a locally free subsheaf of a vector bundle $E$ then its saturation $M^s\subset E$,  satisfies 
$\mu(M)\leq \mu(M^s).$
It follows that one can equivalently define (semi)stability of vector bundles by checking the slope of their subbundles. However, this argument fails in the realm of linear (semi)stability: although a subsheaf $M\subset E$ may be globally generated, its saturation $M^s$ need not preserve this property.
 \end{remark}
 
Barja--Stoppino \cite{BS} proved that if $V\subset H^0(K_C)$ is a general subspace of codimension $\leq \Cliff(C)$ then $(K_C, V)$ is linearly semistable. They used this fact to study a lower bound for the slope of fibred surfaces. In an analogous manner Mistretta--Stoppino proved the following Proposition in \cite[Proposition 3.3]{MS}:
\begin{proposition}\label{MSprop}
Let $C$ be a curve of genus $g\geq 2$. Let $L\in \Pic(C)$ be a globally
generated line bundle such that $\deg(L)-2(h^0(L)-1)\leq \Cliff(C)$.
 Then $L$ is linearly
semistable. It is linearly stable unless 
$L=K_C(D)$ with $D$ an effective divisor of degree
$2$, or $C$ is hyperelliptic and $\deg(L)-2(h^0(L)-1)$. 
\end{proposition}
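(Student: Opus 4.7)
My plan is to argue by contradiction. I will assume $(L, H^0(L))$ fails to be linearly semistable and extract a globally generated coherent subsystem $(F, W) \subsetneq (L, H^0(L))$ with $\lambda(F, W) < \lambda(L)$. Writing $F = L(-D)$ for the base divisor $D$ of $W$, and observing that enlarging $W$ to $H^0(F)$ (which contains it) can only \emph{decrease} $\lambda(F,W)$ — the denominator $\dim W - 1$ grows while the numerator is fixed — I reduce to the complete situation $W = H^0(F)$ with $F = L(-D)$ globally generated. Introduce the shorthand $d = \deg L$, $e = \deg F$, $r = h^0(L) - 1$, $s = h^0(F) - 1$; the global generation of $L$ together with $D \neq 0$ forces $s < r$, and the goal becomes
\[
re \;\geq\; sd,
\]
with strict inequality outside the listed exceptional configurations.

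The main dichotomy is according to whether $F$ lies in the Clifford range. If $h^1(F) \geq 2$, I combine the Clifford inequality $\Cliff(F) \geq \Cliff(C)$ with the hypothesis $\Cliff(L) \leq \Cliff(C)$ to deduce $\deg D \leq 2(r-s)$, substitute into the identity $re - sd = d(r-s) - r \deg D$, and obtain the estimate $re - sd \geq (r-s)\Cliff(L)$; this settles the case whenever $L$ is special, since Clifford's theorem then gives $\Cliff(L) \geq 0$. In the complementary case $h^1(F) \leq 1$, the inclusion $F \subseteq L$ forces $h^1(L) \leq h^1(F) \leq 1$, so $L$ is itself essentially non-special, and I split on the values of $h^1(F), h^1(L) \in \{0,1\}$. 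Riemann--Roch expresses $s$ explicitly in terms of $e$, and direct computation yields $re - sd = g \deg D$ when both $h^1$ vanish, and $(g-1)\deg D$ when both are $1$ (writing $F = K_C(-A)$, $L = K_C(-B)$ with $A \sim B + D$). The mixed sub-case $h^1(F) = 1$, $h^1(L) = 0$ with $F = K_C(-A)$ and $L = K_C(D-A)$ rewrites the desired inequality as $(g-1)\deg D + \deg A \geq 2(g-1)$, and the constraint $r - s = \deg D - 1 \geq 1$ imposed by global generation of $L$ gives $\deg D \geq 2$, closing the argument.

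Tracking the equality cases across the analysis pinpoints exactly the listed exceptions: equality in the Clifford branch forces $\Cliff(F) = \Cliff(L) = 0$, which by Clifford's theorem can occur only on hyperelliptic $C$ with $L$ and $F$ multiples of the $g^1_2$ (the hyperelliptic exception); equality in the delicate mixed sub-case forces $\deg D = 2$ and $A = 0$, i.e., $L = K_C(D)$ with $D$ effective of degree $2$. The step I expect to be the hardest is the non-special branch of the Clifford case, where $\Cliff(L)$ can be negative and the naive estimate $(r-s)\Cliff(L)$ is unhelpful; to handle this I will combine the alternative identity $re - sd = g \deg D - d \cdot h^1(F)$ (valid when $L$ is non-special, since then $r = d - g$) with the Clifford bound $e \leq 2g - 2 h^1(F) - \Cliff(C)$, and verify non-negativity through a short sign analysis that uses the full strength of $\Cliff(L) \leq \Cliff(C)$.
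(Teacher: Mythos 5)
Your argument is correct, and I verified that the ``hard'' non-special branch does close as you predict: from $\Cliff(L)\leq\Cliff(C)$ one gets $\deg D\geq 2h^1(F)$, and combining this with $e\leq 2g-2h^1(F)-\Cliff(C)$ in the identity $re-sd=\deg D\,(g-h^1(F))-e\,h^1(F)$ yields $re-sd\geq h^1(F)\Cliff(C)\geq 0$, with equality only in the hyperelliptic exception. Note, however, that the paper offers no proof of this statement at all --- it is quoted verbatim from Mistretta--Stoppino \cite[Proposition~3.3]{MS} --- and your proof is essentially the argument of that reference: reduce to complete generated subsheaves $L(-D)$, compare $\Cliff(L(-D))$ with $\Cliff(C)\geq\Cliff(L)$ when $L(-D)$ contributes, and otherwise run a Riemann--Roch case analysis on $h^1$, tracking equality to isolate $K_C(D)$ and the hyperelliptic case.
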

\begin{remark}
(i) The equality in Proposition \ref{MSprop} does not imply that $L$ computes the Clifford index of $C$. 
In fact, there exist line bundles $L$ for which $\Cliff(L)=\Cliff(C)$ but $h^1(L)\leq 1$. By definition, such a line bundle $L$ does not contribute to the Clifford index of $C$. For example, consider a general line bundle $L$ of degree $\frac{g-1}{2}$ on a general curve of odd genus $g=2g_1+1$. Then 
$\Cliff(K\otimes L^*)=\Cliff(C)$, but $K\otimes L^*$ does not compute the Clifford index of $C$, because $h^0(L)=0$ and thus it does not contribute to $\Cliff(C)$. However, Proposition \ref{MSprop}  applies to the line bundle $K\otimes L^*$, as it satisfies the inequality stated therein. 

(ii) A similar situation occurs for the line bundles appearing in Lemma \ref{Newsteadlem}. Since $E$ is semistable with 
$\mu(E)\leq g-1$, the line bundle $M$ satisfies $\deg(M)\leq g-1$. Therefore, it contributes to $\Cliff(C)$ and by the result of the mentioned lemma,  it computes $\Cliff(C)$. However, there is no guarantee that the quotient line bundle $E/M$ computes $\Cliff(C)$. Nevertheless, by the result of Lemma \ref{Newsteadlem}, we have $\Cliff(E/M)=\Cliff(C)$. In particular, by \cite[Proposition 3.3]{MS} and \cite[Theorem 5.1]{MS}, we have that $M_{E/M}$ is linearly semistable in general and stable under the hypothesis in \cite[Proposition 3.3]{MS}.
\end{remark}

To conclude, we recall the following conjecture due to Mistretta--Soppino (\cite[Conjecture 6.1]{MS}). 
\begin{conjecture}\label{MSconjecture}
(\textit{MS Conjecture}):	Let $(L,V )$ be a generated linear series as above. If $\deg(L)-2(\dim V-1)\leq \Cliff(C)$, then $(L,V)$ is linearly (semi)stable if and only if $M_{L,V}$ is (semi)stable.
\end{conjecture}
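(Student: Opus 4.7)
The plan is to establish the two implications separately, using the Butler diagram as the main tool; the Clifford hypothesis only enters in the harder direction.

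For the ``if'' direction, I would argue that every globally generated coherent subsystem $(F, W) \subseteq (L, V)$ produces, via the Butler diagram with $S = M_{F, W}$, an injection of Lazarsfeld--Mukai bundles $M_{F, W} \hookrightarrow M_{L, V}$ — the snake lemma applied to the two Butler sequences makes this routine. Reading off the slope inequality $\mu(M_{F, W}) \leq \mu(M_{L, V})$ and translating it through
$$\mu(M_{F, W}) = -\frac{\deg F}{\dim W - 1}, \qquad \mu(M_{L, V}) = -\frac{\deg L}{\dim V - 1}$$
gives precisely $\lambda(F, W) \geq \lambda(L, V)$, that is, linear (semi)stability. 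This direction does not use the Clifford hypothesis.

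For the ``only if'' direction, I would assume $(L, V)$ linearly (semi)stable and argue by contradiction: suppose $M_{L, V}$ is not (semi)stable, and pick $S \subset M_{L, V}$ stable of maximal slope. Form the Butler diagram of $(L, S)$. Since $L$ has rank one, the image $I_S$ of $\alpha_S \colon F_S \to L$ is a line subsheaf $L(-D)$ globally generated by the image of $W$ in $H^0(I_S)$, and the natural map $W \to H^0(I_S)$ is injective because $W \subseteq V = H^0(L)$ already factors through $H^0(I_S)$. Hence $(I_S, W)$ is a globally generated coherent subsystem of $(L, V)$, and linear (semi)stability applied to it gives
$$\frac{\deg I_S}{\dim W - 1} \,\geq\, \frac{\deg L}{\dim V - 1}.$$
On the other hand, destabilization $\mu(S) \geq \mu(M_{L, V})$ read off the sequence $0 \to S \to W \otimes \mathcal{O}_C \to F_S \to 0$ yields $\deg F_S / (\dim W - \rk F_S) \leq \deg L / (\dim V - 1)$. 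Subtracting the two inequalities forces
$$\deg N_S \,=\, \deg F_S - \deg I_S \,\leq\, -\,\deg L \cdot \frac{\rk F_S - 1}{\dim V - 1},$$
which is already a contradiction when $\rk F_S = 1$ (then $F_S = I_S$ and the two inequalities clash) and places a strong negativity constraint on the kernel bundle $N_S = \ker \alpha_S$ in the remaining case $\rk F_S \geq 2$.

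The hard part is ruling out $\rk F_S \geq 2$, and this is where the Clifford hypothesis $\deg L - 2(\dim V - 1) \leq \Cliff(C)$ must intervene. The idea would be to combine Lemma \ref{Abel} (which forces $H^0(M_{L, V}/S) = 0$, hence $h^0(F_S) \geq \dim W$) with the map $F_S \to L$ to extract a semistable subquotient whose Clifford-type invariant is strictly smaller than $\Cliff(C)$, contradicting the definition. I expect the main obstacle to be precisely this extraction: $F_S$ itself need not be semistable and $N_S$ need not be globally generated, so one cannot directly feed $F_S$ into the definition of the Clifford index. Pinning down the correct semistable subquotient whose numerical invariants are controlled by $(I_S, W)$ and $N_S$ is the delicate step that has kept the conjecture open in full generality; the partial results of the present paper (see Theorem \ref{introthm2}) navigate this step only in the rank-two case under auxiliary hypotheses on the existence of a particular line subbundle.
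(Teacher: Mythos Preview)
The statement you were asked to prove is recorded in the paper as a \emph{conjecture} (Conjecture~\ref{MSconjecture}), not as a theorem; the paper contains no proof of it and indeed uses only the partial cases established by Mistretta--Stoppino themselves (notably $V=H^0(L)$, \cite[Theorem~5.1]{MS}) as input elsewhere. So there is no ``paper's own proof'' to compare against. Your outline is honest about this: the ``if'' direction is the easy one and your argument via the inclusion $M_{F,W}\hookrightarrow M_{L,V}$ is correct; for the ``only if'' direction you dispose of the case $\rk F_S=1$ correctly (there $F_S=I_S$ and the linear-stability inequality for $(I_S,W)$ is literally the slope inequality for $S$), and you rightly identify $\rk F_S\ge 2$ as the step that remains open in general.

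One correction to your closing paragraph: Theorem~\ref{introthm2} is not progress on the rank-one MS Conjecture. It addresses the paper's \emph{higher-rank analogue}, Conjecture~\ref{MSBconjecture}(ii), for rank-two bundles $E$ computing $\Cliff_2(C)$; the ``rank two'' there refers to the rank of $E$, not to the rank of a destabilising $F_S$ inside $M_{L,V}$ for a line bundle $L$. The techniques of \S\ref{equivalencesection} (Proposition~\ref{diagramlem}, Lemma~\ref{finalthrm}, Theorem~\ref{specialcase}) are tailored to that rank-two situation and do not, as the paper stands, feed back into the rank-one Conjecture~\ref{MSconjecture}. So while your diagnosis of where the difficulty lies is accurate, the reference you give points to a parallel problem rather than to partial progress on the statement at hand.
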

Mistretta–Stoppino proved Conjecture \ref{MSconjecture} under certain conditions, including the case $V=H^0(L)$, which plays a crucial role in our arguments, see \cite[Theorem 5.1]{MS}.
For higher ranks, we expect that at least the following extension of Conjecture (\ref{MSconjecture}) holds.
\begin{conjecture}\label{MSBconjecture}
		Let $E$ be a globally generated vector bundle computing $\Cliff_n(C)$ with $n\geq 2$.  Then\\
	(i) $E$ is linearly (semi)stable. \\
	(ii) $E$ is linearly (semi)stable if and only if $M_{E}$ is (semi)stable.
\end{conjecture}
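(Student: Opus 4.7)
The plan is to split the conjecture into its two parts, treat the rank-two case concretely using the Clifford-theoretic input already developed in the paper, and flag where the argument breaks for $n \geq 3$. For part (i), given a globally generated coherent subsystem $(F, W) \subseteq (E, H^0(E))$ with $\deg F > 0$, I would aim to extract a linear-slope inequality $\lambda(F, W) \geq \lambda(E)$ from the Clifford-type lower bound on $F$ implied by the assumption that $E$ computes $\Cliff_n(C)$. The semistability of $E$ together with the bound $\mu(E) \leq g-1$ guarantees that any subbundle still lies in the range where the Clifford indices are well defined. In rank two the reduction is clean: a proper rank-two subbundle $F \subsetneq E$ is saturated with $\Cliff(F) \geq \Cliff_2(C)$, while a line subbundle $L \subset E$ with $h^0(L) \geq 2$ triggers Lemma \ref{Newsteadlem}, forcing $\Cliff(L) = \Cliff(E/L) = \Cliff(C)$ and the additivity $h^0(E) = h^0(L) + h^0(E/L)$, from which $\lambda(L) \geq \lambda(E)$ follows by direct algebra.

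For part (ii), one direction is formal: if $M_E$ is (semi)stable, then the Butler diagram \eqref{Butlerdiagram} translates any destabilizing subsystem of $(E, H^0(E))$ into a destabilizing subbundle of $M_E$. For the converse I would argue by contradiction. Choose a maximal-slope destabilizing subbundle $S \subsetneq M_E$, form the Butler diagram, and extract the globally generated $F_S$ together with the nonzero map $\alpha_S : F_S \to E$ with image $I_S$. The strategy is then to exploit Properties \ref{Butler_prop}, together with the estimates \eqref{F_sdegreeinequality1}--\eqref{F_sdegreeinequality2}, to turn $(I_S, W)$ into a subsystem violating the linear (semi)stability of $(E, H^0(E))$. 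In rank two the argument bifurcates on $\rk I_S$: when $\rk I_S = 1$, $I_S$ is a globally generated line subbundle of $E$ and the Butler estimates give the desired linear-slope violation; when $\rk I_S = 2$, the saturation of $I_S$ equals $E$ and one compares $\deg I_S$, $\deg F_S$, and $\dim W$ similarly. In higher rank the approach should proceed via Proposition \ref{diagramlem}, relating the Butler diagram of $E$ to those of line subbundles $L \subset E$ with $h^0(L) = 2$, thereby reducing to the line-bundle case in which Conjecture \ref{MSconjecture} is known to hold (in particular for $V = H^0(L)$).

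The principal obstacle for part (i) when $n \geq 3$ is the absence of a higher-rank analogue of Lemma \ref{LNlem}: one would need numerical control on globally generated subbundles of all intermediate ranks $1 \leq m \leq n$ of a bundle computing $\Cliff_n(C)$, and no such control is currently available. For part (ii), even in rank two, the delicate point is the possible range of $h^0(L)$ for line subbundles $L \subset E$: the diagrammatic reduction to Mistretta--Stoppino's result only yields a clean conclusion under numerical hypotheses that force $h^0(L)$ to be small (the paper's Theorem \ref{introthm2} handles $h^0(L) = 2$ or $h^0(L) \leq 6$), and the truly general case appears to require a genuinely new input going beyond the Butler formalism.
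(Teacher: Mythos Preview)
First, a framing point: the statement is a conjecture, and the paper does not claim a full proof. What it establishes is part~(i) completely for $n=2$ (Theorems~\ref{indicecomputingthm1} and~\ref{indicecomputingthm0}) and part~(ii) for $n=2$ only under the extra hypothesis that $E$ admits a subpencil or $h^0(E)\le 6$ (Theorems~\ref{MSconjecturerank=2} and~\ref{h0_less_than 6}). You correctly flag that the general case is open, but your sketch of the $n=2$ arguments has a genuine gap. For part~(i) your claim that a proper rank-two locally free subsheaf $F\subsetneq E$ automatically satisfies $\Cliff(F)\ge\Cliff_2(C)$ is exactly the step that fails: $F$ need not be semistable, and then it does not contribute to $\Cliff_2(C)$ at all. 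This is precisely where Lemma~\ref{LNlem} enters --- it bounds $\Cliff(F)$ from below by $\Cliff(C)$ for non-semistable $F$ with no trivial quotient --- yet you invoke that lemma only as an obstruction for $n\ge 3$, never as the essential ingredient it already is for $n=2$ (see Proposition~\ref{indicecomputinglem1}). The case $h^0(F)=3$, where $F$ does not contribute to $\Cliff_2$ even if semistable, also needs a separate argument via $h^0(\det F)\ge 2$ and the gonality bound, which your outline omits.

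For part~(ii) with $n=2$, your bifurcation on $\rk I_S$ is not the paper's route, and the claim that when $\rk I_S=1$ ``the Butler estimates give the desired linear-slope violation'' is too quick: what is actually needed (Theorem~\ref{specialcase}, Corollary~\ref{rank1cor}) is a Castelnuovo-type step showing that $0\to\mathcal O_C^{\oplus(\rk F_S-1)}\to F_S\to\det F_S\to 0$ is exact on global sections, after which Lemma~\ref{finalthrm} applies. In the subpencil case (Theorem~\ref{MSconjecturerank=2}) the paper does not look at $\rk I_S$ at all; it decomposes $S$ along the filtration $M_L\subset M_E$ via Proposition~\ref{diagramlem} and reduces to the known semistability of $M_{E/L}$ supplied by Mistretta--Stoppino. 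You attribute Proposition~\ref{diagramlem} to a higher-rank strategy, but in the paper it is a rank-two device whose role is to split the rank-two Butler diagram into two rank-one pieces, not to descend from rank $n\ge 3$.
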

\begin{remark}\label{rem0}
The globally generated condition in Conjecture \ref{MSBconjecture} is redundant when $n=2$, as Lange and Newstead proved in \cite{LNe1} that every vector bundle computing rank two Clifford index is primitive; that is both $E$ and $K\otimes E^*$ are globally generated.
\end{remark}

\subsubsection{Butler's Conjecture}\label{butlersection}
For a given coherent system $(E, V)$ and a non-negative real number $\alpha$, we say that $(E, V)$ is $\alpha$-(semi)stable, if for each coherent subsystem $(F, W)$ of $(E, V)$ we have $$\mu_\alpha(F,W)(\leq)<\mu_\alpha(E,V),$$ in which $\mu_\alpha(F, W)$ is defined to be 
$$\mu_\alpha(F, W):= \frac{\deg(F)+\alpha\cdot \dim W}{\rk (F)}\cdot$$ 
For $\alpha> 0$, there exists a moduli space $G(\alpha, n, d, n+m)$ parametrizing  $\alpha$-semistable coherent systems of type $(n,d,n+m)$ over $C$. If $(E, V)$ is $\alpha$-stable for 
small values of $\alpha$, then $E$ would be semistable, as well. For $\alpha$ close to $0$,
 following \cite{Ortetal} and \cite{CH2024},   we set  
$$S_0(n,d,n+m) := \{(E,V ) \in G(\alpha, n,d,n+m) : (E,V ) \quad \!\!\!\! \mbox{generated}  \}.$$
The locus $S_0(n, d, n+m)$ is expected to have dimension equal to the Brill--Noether number $\beta(n, d, n+m)$, given by
 $$\beta(n, d, n+m)=n^2(g-1)+1-(n+m)\cdot [(n+m)-d+n(g-1)].$$
 
\begin{definition}
Let $(E, V)$ be a coherent system.
\begin{itemize}

\item A coherent subsystem $(F, W) \leq (E, V)$ is said to be \emph{$\alpha$-destabilizing} if $\mu_\alpha(F, W) \geq \mu_\alpha(E, V)$.

     \item A subbundle $F \leq E$ is said to \emph{destabilize} $E$ if $\mu(F) \geq \mu(E)$. If $E$ is semistable and admits a destabilizing subbundle, then $E$ is referred to as a \emph{strictly semistable} bundle.
    
    \item Similarly, a coherent subsystem $(F, W) \leq (E, V)$ is said to \it{linearly destabilize} $(E, V)$ if $\lambda(F, W) \geq \lambda(E, V)$. If $(E, V)$ is linearly semistable and admits such a subsystem, then it is called a \it{strictly linearly semistable} coherent system.
\end{itemize}
\end{definition}

\begin{conjecture}\label{Butconjecture}
(\textit{D.C. Butler}):
Suppose that $C$ is a general curve and $(E,V)$ a general element of any
 component of $S_0(n,d,n+m)$. Then the coherent system $(M_{E, V}^*, V^*)$ is $\alpha$-stable for $\alpha$ close to $0$, and the map $(E,V ) \mapsto (M_{E, V}^*, V^*)$ gives a birational equivalence between $S_0(n, d, n+m)$ and $S_0(m, d, n+m)$.    
\end{conjecture}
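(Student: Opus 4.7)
The plan is to leverage Theorem \ref{introthm2}, which identifies linear (semi)stability of a coherent system $(E,V)$ with (semi)stability of its Lazarsfeld--Mukai bundle $M_{E,V}$ whenever $h^0(L)\leq 6$ -- a hypothesis automatically satisfied here since $\dim V = 5$. Combined with the linear stability statement for general elements of irreducible components of $S_0(2,d,5)$ announced in the abstract, this would convert Butler's conjecture in our setting into a direct consequence of the stability of the Lazarsfeld--Mukai bundle under duality.

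First, I would fix an irreducible component $X$ of $S_0(2,d,5)$ and consider a general element $(E,V)\in X$. The linear stability theorem for general elements of $X$ in the range $2\delta_2\leq d\leq \frac{3g}{2}$ relies on Lemma \ref{LNlem}, the bound on the number of sections of line subbundles of generated semistable bundles, and crucially on a dimension estimate: the locus of coherent systems that fail to be linearly stable -- corresponding to the exceptional configurations (i) and (ii) of Theorem \ref{introthm} -- must be shown to have dimension strictly less than the expected Brill--Noether dimension $\beta(2,d,5)$ of $X$.

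Next, once linear stability of a general $(E,V)\in X$ is in hand, I would apply Theorem \ref{introthm2} to conclude that $M_{E,V}$ is stable. Dualizing the defining sequence of $M_{E,V}$ yields a coherent system $(M_{E,V}^*, V^*)$ of type $(3,d,5)$ with stable underlying bundle, hence $\alpha$-stable for $\alpha$ close to $0$, so it lies in $S_0(3,d,5)$. For the birational equivalence, I would exhibit the natural inverse given by the Lazarsfeld--Mukai construction applied to a general $(F,W)\in S_0(3,d,5)$: the kernel of the dualized evaluation morphism recovers $(E,V)$. Since the expected Brill--Noether dimensions $\beta(2,d,5)$ and $\beta(3,d,5)$ coincide and the map is generically injective, generic isomorphism of dense open subsets follows, establishing birationality.

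The main obstacle I anticipate is bounding the two exceptional loci pinpointed in Theorem \ref{introthm}: coherent systems $(E,V)$ with a globally generated line subbundle $L\subset E$ satisfying $h^0(L)=\frac{h^0(E)}{2}$, and those with a rank-two globally generated locally free subsheaf $T\subset E$ with $\Cliff(T)=\Cliff_2(C)$. Each configuration must be parametrized explicitly -- the first via the relevant Brill--Noether loci $W^{h^0(L)-1}_{\deg L}$ of line bundles together with extension spaces, the second through the higher-rank Lange--Newstead loci -- and shown to lie in a proper closed subset of $X$ of dimension strictly below $\beta(2,d,5)$. The numerical interval $2\delta_2\leq d\leq \frac{3g}{2}$ is exactly what forces these dimension estimates to be favourable; performing the rank-two subsheaf case carefully is the crux of the argument, since the parameter count there is the least forgiving.
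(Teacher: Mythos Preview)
The statement you are asked to prove is Butler's conjecture in full generality; the paper does not prove this, and neither does your proposal. What the paper establishes is the special case of type $(2,d,5)$ for $2\delta_2\le d\le\frac{3g}{2}$ (Theorem~\ref{Butlerfor2d5}), and your outline is really an attempt at that special case. So at the outset there is a scope mismatch.

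More seriously, even restricted to $(2,d,5)$, your plan rests on a misapplication of Theorems~\ref{introthm} and~\ref{introthm2}. Both of those results have as a standing hypothesis that $E$ \emph{computes} $\Cliff_2(C)$; the condition ``$h^0(E)\le 6$'' in Theorem~\ref{introthm2} is an additional restriction, not a replacement for it. A general $(E,V)\in S_0(2,d,5)$ has no reason whatsoever to compute $\Cliff_2(C)$, so neither theorem applies, and the exceptional loci you propose to bound (those from Theorem~\ref{introthm}) are simply not the relevant obstruction loci here. The paper does not reuse Theorems~\ref{introthm} and~\ref{introthm2} for Butler; it proves separate, purpose-built equivalence statements (Theorems~\ref{Butlerconjthrm3} and~\ref{Butlerconjthrm3-1}) valid for \emph{any} generated $(E,V)$ of type $(2,d,5)$ or $(3,d,5)$ in the given degree range, using the Mercat equality $\Cliff_2(C)=\Cliff(C)$ on a general curve and direct slope estimates on $S^*$.

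Your treatment of the birational equivalence is also incomplete. Butler's conjecture requires density of $T(2,d,5)$ in $S_0(2,d,5)$ \emph{and} density of $T(3,d,5)$ in $S_0(3,d,5)$; equality of Brill--Noether numbers and generic injectivity of $D$ do not by themselves yield the second density, since $S_0(3,d,5)$ could have extra components missed by the image. The paper handles the $(3,d,5)$ side independently: it shows (Theorems~\ref{Butlerconjthrm3-1},~\ref{Butlerconjthrm4-1}, Proposition~\ref{prop(3,d,5)}) that a general element of any component of $S_0(3,d,5)$ either lies outside the rank-two subnet locus $N^2_0(3,d,5)$, hence is linearly stable, or that the component containing it has dimension below $\beta(3,d,5)$. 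The actual bad locus on the $(2,d,5)$ side is the subnet locus $N_0(2,d,5)$ (Lemma~\ref{closedness}), not the Clifford-index-computing configurations you describe.
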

\begin{definition}
 Following \cite{Ortetal} and \cite{CH2024}, we shall say the Butler's conjecture holds non-trivially for type $(n, d, n+m)$, if $S_0(n, d, n+m)$ is nonempty and conjecture \ref{Butconjecture} holds.
\end{definition}

\section{Linear Stability of Bundles computing the second Clifford Index}\label{Linearstabilitysection}
We begin this section with two lemmas that will play a central role in many of the arguments throughout the paper.
\begin{lemma}\label{Newsteadlem1}
Suppose $(E, V)$ is a rank $n$ globally generated coherent system with $\mu(E)\leq g-1$ and 
    $\frac{d_E}{n}-\frac{2}{n}\dim V+2\leq \Cliff(C)$. If $E$ is semistable and $L\subset E$ is an invertible subsheaf, then 
	\begin{align}\label{subsections}
		\dim [H^0(L)\cap V]\leq \frac{\dim V}{n}\cdot
	\end{align}
In particular, if $\Cliff(E)=\Cliff_2(C)$ and there is a line subbundle $L\subset E$ with $h^0(E)=2h^0(L)$, then   $E$ is strictly semistable.	
\end{lemma}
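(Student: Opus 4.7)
First I would set $V_L := H^0(L) \cap V$ and $v := \dim V_L$, and dispose of the trivial case $v \leq 1$: since $V$ generates the rank-$n$ bundle $E$ we have $\dim V \geq n$, so $v \leq 1 \leq \dim V/n$. I therefore assume $v \geq 2$ and consider the line subsheaf $M := \im(V_L \otimes \mathcal{O}_C \to L)$, which satisfies $V_L \subseteq H^0(M)$ and $\deg M \leq \deg L$ (as a non-zero subsheaf of a line bundle on a curve, $M$ is automatically locally free of rank one).

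The crucial step is to show that $M$ qualifies as a contributor to $\Cliff(C)$, i.e., that both $h^0(M), h^1(M) \geq 2$. The first is immediate from $h^0(M) \geq v \geq 2$. For the second, I would combine the semistability of $E$ (which gives $\deg L \leq \mu(E)$) with the hypothesis $\mu(E) \leq g-1$ to obtain $\deg M \leq g-1$; Riemann--Roch then yields $h^1(M) = h^0(M) - \deg M + g - 1 \geq v \geq 2$. This is the point where the two hypotheses on $E$ enter essentially.

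With $M$ now admissible for $\Cliff(C)$, the chain
\[
\Cliff(C) \;\leq\; \Cliff(M) \;\leq\; \deg L - 2v + 2 \;\leq\; \tfrac{d_E}{n} - 2v + 2,
\]
combined with the numerical hypothesis $\tfrac{d_E}{n} - \tfrac{2}{n}\dim V + 2 \leq \Cliff(C)$, gives $v \leq \dim V/n$, which is (\ref{subsections}).

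For the final claim I would specialize to $n = 2$ and $V = H^0(E)$, so that $v = h^0(L)$ (and note $h^0(L) \geq 2$ is automatic, since $E$ computing $\Cliff_2(C)$ forces $h^0(E) \geq 2n = 4$, hence $h^0(L) = h^0(E)/2 \geq 2$). The hypothesis $h^0(E) = 2h^0(L)$ turns (\ref{subsections}) into an equality, so every inequality in the chain above must also be an equality; in particular $\deg L = \mu(E)$, exhibiting $L$ as a destabilizing line subbundle and proving that $E$ is strictly semistable. The main obstacle in the argument is the Riemann--Roch step controlling $h^1(M)$: the semistability assumption and the bound $\mu(E) \leq g-1$ have to work in tandem to guarantee that $M$ is admissible for the classical Clifford index; once this is established the rest is a tight chain of inequalities.
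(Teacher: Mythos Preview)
Your proof is correct. The first part follows the paper's approach essentially verbatim; your introduction of the image sheaf $M$ is unnecessary (the paper works directly with $L$, since $v\geq 2$ already forces $h^0(L)\geq 2$), but it does no harm.

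For the second part your argument is genuinely different from the paper's, and in a useful way. The paper invokes Lemma~\ref{Newsteadlem} (a result of Lange--Newstead) to conclude $\Cliff(E)=\Cliff(L)=\Cliff(C)$, and then reads off $\deg E = 2\deg L$ from $h^0(E)=2h^0(L)$. You instead observe that $v = \dim V/n$ collapses the entire chain of inequalities from the first part into equalities, forcing $\deg L = d_E/2$ directly. Your route is more self-contained: it avoids appealing to the external Lange--Newstead lemma and shows that the ``strictly semistable'' conclusion is already latent in the inequality chain of the first part. The paper's route, on the other hand, extracts the stronger information $\Cliff(L)=\Cliff(E/L)=\Cliff(C)$ along the way, which is used elsewhere (e.g.\ in Lemma~\ref{Newsteadlem3}); your argument does not yield this for free.
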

\begin{proof}
Since $\dim V\geq n+1$, the assertion holds whenever $\dim [H^0(L)\cap V]\leq 1$. If $\dim [H^0(L)\cap V]\geq 2$ then, using $\mu(L)\leq \mu(E)\leq g-1$, we obtain $h^1(L)\geq 2$. Thus, $L$ contributes to the Clifford index and 
	we have $$d_L-	2\dim [H^0(L)\cap V]+2\geq d_L-2h^0(L)+2\geq \Cliff(C)\geq \frac{d_E}{n}-\frac{2}{n}\dim V+2.$$
	This, together with the inequality $d_L\leq \frac{d_E}{n}$, which follows from the semistability of $E$, establishes the assertion.
	
For the second part, by Lemma \ref{Newsteadlem}, we have
$$\Cliff_2(C)=\Cliff(E)=\Cliff(L)=\Cliff(C).$$ Therefore, the condition $h^0(E)=2h^0(L)$ implies $\deg(E)=2\deg(L)$, as required.
\end{proof}

\begin{corollary}\label{indicecomputingcor}
Let $C$ be a non-hyperelliptic curve and suppose that the vector bundle $E$ computes $\Cliff_n(C)$. If $L$ is a non-trivially generated  invertible subsheaf of $E$, then
$\lambda(L)\geq \lambda(E)$.   
\end{corollary}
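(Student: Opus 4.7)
My plan is to extract two numerical inequalities about $L\subset E$ and combine them algebraically. The two inputs are an upper bound on $h^0(L)$ coming from Lemma \ref{Newsteadlem1}, and a lower bound on $\Cliff(L)$ coming from the fact that $L$ contributes to $\Cliff(C)$.

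Since $E$ computes $\Cliff_n(C)$, by Definition \ref{Cliffordindices} it is semistable with $\mu(E)\le g-1$, $h^0(E)\ge 2n$, and $\Cliff(E)=\Cliff_n(C)\le \Cliff(C)$. This last inequality is precisely the numerical hypothesis of Lemma \ref{Newsteadlem1} applied to $V=H^0(E)$, and because $L\subset E$ forces $H^0(L)\subset H^0(E)$, the lemma delivers
$$h^0(L)\;\le\;\frac{h^0(E)}{n}.$$
Next, non-trivial generation of $L$ yields $h^0(L)\ge 2$, while semistability of $E$ gives $\deg L\le \mu(E)\le g-1$, so Riemann--Roch forces $h^1(L)\ge 2$. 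Hence $L$ contributes to $\Cliff(C)$, and
$$\Cliff(L)\;\ge\;\Cliff(C)\;\ge\;\Cliff_n(C)\;=\;\Cliff(E),$$
which after clearing denominators becomes $n(d_L-2h^0(L))\ge d_E-2h^0(E)$.

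To conclude, I multiply this last inequality through by $h^0(E)-n>0$ and rearrange. A brief expansion shows that
$$n\bigl[d_L(h^0(E)-n)-d_E(h^0(L)-1)\bigr]\;\ge\;\bigl(h^0(E)-n\,h^0(L)\bigr)\bigl(d_E-2h^0(E)+2n\bigr).$$
Both factors on the right are non-negative: the first by the estimate coming from Lemma \ref{Newsteadlem1}, and the second equals $n\,\Cliff(E)=n\,\Cliff_n(C)$, which is non-negative by Clifford's theorem for semistable vector bundles of slope $\le 2g-2$. Therefore $d_L(h^0(E)-n)\ge d_E(h^0(L)-1)$, which is exactly $\lambda(L)\ge \lambda(E)$. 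The main obstacle is recognizing the clean factorization in the displayed inequality; once it is spotted, the non-negativity of the two factors is immediate from the two preceding steps.
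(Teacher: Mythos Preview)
Your proof is correct and follows essentially the same route as the paper's: both arguments combine the bound $n\,h^0(L)\le h^0(E)$ from Lemma~\ref{Newsteadlem1} with the inequality $\Cliff(L)\ge \Cliff_n(C)=\Cliff(E)$ (valid because $L$ contributes to $\Cliff(C)$), and then manipulate algebraically. The only cosmetic difference is that the paper divides through by $h^0(L)-1$ and reduces to checking $\frac{d_E-2(h^0(E)-n)}{n(h^0(L)-1)}\ge \frac{d_E-2(h^0(E)-n)}{h^0(E)-n}$, whereas you multiply by $h^0(E)-n$ and isolate the factorization; in both cases the non-negativity of $n\Cliff(E)=d_E-2h^0(E)+2n$ is what makes the final step work, and you are right to make this explicit via the Clifford theorem for semistable bundles.
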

\begin{proof}
Since $L\subset E$ is non-trivial and a globally generated line bundle on $C$, then $h^0(L)\geq 2$. Moreover, as $\mu(E)\leq g-1$, the line bundle $L$ contributes to $\Cliff(C)$. Thus we have $$d_L-2h^0(L)+2\geq \Cliff_n(C)=\frac{d_E}{n}-\frac{2}{n}h^0(E)+2.$$
This is equivalent to 
$$\frac{d_L}{h^0(L)-1}-2\geq \frac{d_E-2(h^0(E)-n)}{n\cdot (h^0(L)-1)}\cdot$$
Therefore, it suffices to prove $$\frac{d_E-2(h^0(E)-n)}{n\cdot (h^0(L)-1)}\geq \frac{d_E}{h^0(E)-n}-2,$$ which is equivalent to $ n\cdot h^0(L) \leq h^0(E).$
Thus, $\lambda(L)\geq \lambda(E)$ by Lemma \ref{Newsteadlem1}. 
\end{proof}
\begin{lemma}\label{LNlem}
Let $F$ be a globally generated vector bundle of rank two on $C$, which is not semistable and satisfies $\mu(F)\leq g-1$. Assume further that $F$ admits no trivial quotient line bundle. Then
$$\Cliff(F)\geq \Cliff(C).$$
\end{lemma}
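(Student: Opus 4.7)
The plan is to reduce the inequality $\Cliff(F) \geq \Cliff(C)$ to an additive statement for the Clifford values of the two line bundles appearing in a destabilizing extension of $F$. Since $F$ is not semistable, I would pick a destabilizing line subbundle $L \subset F$ to obtain
$$0 \to L \to F \to M \to 0 \quad \text{with} \quad \mu(L) > \mu(F) > \mu(M).$$
The quotient $M$ is globally generated (being a quotient of the globally generated $F$), and by hypothesis $M \ne \mathcal{O}$; since a globally generated line bundle with a single section must be trivial, this forces $h^0(M) \geq 2$. Taking cohomology yields $h^0(F) \leq h^0(L) + h^0(M)$, and together with $\deg F = \deg L + \deg M$ one obtains the key reduction
$$2\Cliff(F) \;\geq\; \Cliff(L) + \Cliff(M),$$
so it suffices to prove $\Cliff(L) + \Cliff(M) \geq 2\Cliff(C)$.

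I would then split into cases according to whether each of $L$ and $M$ contributes to $\Cliff(C)$ in the sense of Definition \ref{Cliffordindices}(i). When both contribute the conclusion is immediate. Note that, since $h^0(M) \geq 2$, the only way $M$ can fail to contribute is $h^1(M) \leq 1$, whereas $L$ can fail for either reason. In the three remaining subcases I would invoke the elementary bounds $\Cliff(L) \geq \deg L$ when $h^0(L) \leq 1$ and $\Cliff(L) \geq 2g - 2 - \deg L$ when $h^1(L) \leq 1$ (from Riemann--Roch), together with the analogous bound for $M$. These interact with the numerical data $\deg L > \deg M$, $\deg L + \deg M \leq 2g - 2$ (from $\mu(F) \leq g - 1$), and the observation that any line bundle contributing to $\Cliff(C)$ has degree at least $\Cliff(C) + 2$.

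The main technical subcase is when $M$ contributes to $\Cliff(C)$ but $L$ does not: here one crucially uses $M \ne \mathcal{O}$ to deduce $\deg M \geq \Cliff(C) + 2$, hence $\deg L \geq \Cliff(C) + 3$, which yields $\Cliff(L) \geq \Cliff(C)$ via either of the two bounds above (for the case $h^1(L) \leq 1$, combine with $\deg L \leq 2g - 2 - \deg M$). The other two subcases are easier: if only $L$ contributes, the constraint $\deg M < g - 1$ together with $\Cliff(C) \leq (g-1)/2$ forces $\Cliff(M) \geq 2g - 2 - \deg M \geq \Cliff(C)$; and if neither contributes, adding the elementary bounds on both sides and using $\deg L + \deg M \leq 2g - 2$ gives $\Cliff(L) + \Cliff(M) \geq 2g - 2 \geq 2\Cliff(C)$. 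That the non-trivial-quotient hypothesis is genuinely needed is shown by the split bundle $F = \mathcal{O} \oplus L$ with $L$ computing $\Cliff(C)$, for which a short calculation yields $\Cliff(F) = \Cliff(C)/2 < \Cliff(C)$ whenever $\Cliff(C) > 0$.
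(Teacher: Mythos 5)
Your proof is correct and follows essentially the same route as the paper: both pass to the destabilizing (Harder--Narasimhan) extension $0\to L\to F\to M\to 0$, use the non-trivial-quotient hypothesis to see that the globally generated quotient $M$ contributes to $\Cliff(C)$, and conclude via the averaging inequality $2\Cliff(F)\geq \Cliff(L)+\Cliff(M)$ after a case analysis on whether $L$ contributes. The only cosmetic difference is that the paper bounds $\Cliff(L)$ against $\Cliff(M)$ (via $h^0$ or, through Serre duality, $h^1$ comparisons) rather than directly against $\Cliff(C)$, and your case in which $M$ fails to contribute is in fact vacuous since $\deg M<\mu(F)\leq g-1$ already forces $h^1(M)\geq 2$.
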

\begin{proof}
	Let $0\rightarrow L_1\rightarrow F\rightarrow L_2 \rightarrow 0$ be a Harder--Narasimhan filtration of $F$ with $\deg(L_1)>\mu(F)>\deg(L_2)$. The assumption of non-semistability implies that $F$ is non-trivially generated. 
	Since $F$ admits no trivial quotient by assumption, $L_2$ can not be the trivial line bundle; therefore, it is non-trivially generated; hence $h^0(L_2)\geq 2$. As $\deg(L_2)< g-1$, the line bundle $L_2$ contributes to $\Cliff(C)$. 
    
    From $\deg(L_1)>\deg(L_2)$ we conclude that if $h^0(L_1)\leq h^0(L_2)$ then $\Cliff(L_1)>\Cliff(L_2)$.
    
	Likewise; if $h^1(L_1)\leq h^0(L_2)$ then from 
	$$\deg(K\otimes L_1^*)=(2g-2-d_F)+\deg(L_2)\geq \deg(L_2),$$
	we get $\Cliff(K\otimes L_1^*)\geq \Cliff(L_2)$.
	Now, since $\Cliff(L_1)=\Cliff(K\otimes L_1^*)$, we have $\Cliff(L_1)\geq\Cliff(L_2)$. 
	
	If neither of the two cases occur, then $L_1$ contributes to $\Cliff(C)$.
Hence, the result follows from
	$$\Cliff(F)\geq \frac{\Cliff(L_1)+\Cliff(L_2)}{2}\geq \Cliff(C).$$
\end{proof}

\begin{remark}\label{remark2}
Let $F$ be as in Lemma \ref{LNlem}, and $(E, V)$ as in Lemma \ref{Newsteadlem1}.
	\begin{itemize}
	\item 	If $F$ admits a trivial quotient, then it's Clifford index can be strictly smaller than $\Cliff_2(C)$. For instance, suppose $C$ is a general curve. Then, there exists an integer $t$ with 
    $$\delta_2\leq t < 2\bigg[  \frac{g-1}{2}\bigg]+4 .$$ 
If $L\in W^2_{t}$ is globally generated, then the bundle $F:=L\oplus \mathcal{O}_C$ is also globally generated and satisfies
	$$\Cliff(F)<\Cliff(C)=\Cliff_2(C).$$
	
	\item If $\Cliff_2(C)=\Cliff(C)$, then $V=H^0(E)$ would be the only subspace of $H^0(E)$ satisfying the conditions in  Lemma \ref{Newsteadlem1}. 
	If $\Cliff_2(C)\leq \Cliff(C)-1$ and $E$ is a bundle computing $\Cliff_2(C)$, then every hyperplane $V\subset H^0(E)$ fulfills the assumptions of Lemma \ref{Newsteadlem1}. See \cite{LNe23} and \cite{LNe24} for examples of curves with this property.

    \item
 A surjection $E\to \mathcal{O}_C \to 0$ will split by \cite[Lemma 1.1]{Ba}. Therefore, if $E$ admits a trivial quotient, the trivial quotient bundle would be a direct summand of $E$; however, this fact is not required in Lemma (\ref{LNlem}).  
\end{itemize}

\end{remark}

	\begin{proposition}\label{indicecomputinglem1}
	Suppose $C$ is a non hyperelliptic curve, and let $E$ be semistable vector bundle with $\mu(E)\leq g-1$. Assume $(E, V)$ is a globally generated rank-two coherent system such that
	$$\frac{d_E}{2}-\dim V+2\leq \Cliff(C).$$
    If $(F, W)$ is a globally generated subsystem of $(E, V)$, where $F\subset E$ is non-semi-stable, rank two locally free subsheaf of $E$, then
	$\lambda(F, W)\geq \lambda(E, V)$.
	
Moreover, if $E$ computes $\Cliff_2(C)$ and $V=H^0(E)$, then 	$\lambda(F)> \lambda(E)$ unless either $\Cliff(F)=\Cliff_2(C)$ or $F$ admits a globally generated line subbundle $L$ with $h^0(L)=h^0(F)$ and $\deg(L)=\deg(F)$.
\end{proposition}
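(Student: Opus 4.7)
The plan is to bound $\lambda(F,W)$ from below and $\lambda(E,V)$ from above via Clifford-type inequalities, and then compare the two using the inclusion $W\subseteq V$. I would distinguish two cases according to whether $F$ admits a trivial quotient line bundle: only the first case fits Lemma~\ref{LNlem} directly, while the second must be treated through the destabilizing line subbundle of $F$.

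\emph{Case 1: $F$ has no trivial quotient.} Here the hypotheses of Lemma~\ref{LNlem} are satisfied, since $F$ is globally generated (inherited from $(F,W)$), rank two, non-semistable by assumption, and $\mu(F)\leq\mu(E)\leq g-1$ because $F\subset E$ is a subsheaf of the same rank. The lemma yields $\Cliff(F)\geq\Cliff(C)$, which I would rewrite as $d_F\geq 2h^0(F)-4+2\Cliff(C)$. Combined with $\dim W\leq h^0(F)$, this gives
\[
\lambda(F,W)=\frac{d_F}{\dim W-2}\geq 2+\frac{2\Cliff(C)}{\dim W-2}.
\]
Meanwhile, the hypothesis $\tfrac{d_E}{2}-\dim V+2\leq\Cliff(C)$ rewrites as $\lambda(E,V)\leq 2+\tfrac{2\Cliff(C)}{\dim V-2}$. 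Since $\Cliff(C)\geq 1$ (as $C$ is non-hyperelliptic) and $\dim W\leq\dim V$, the comparison yields $\lambda(F,W)\geq\lambda(E,V)$.

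\emph{Case 2: $F$ admits a trivial quotient.} The splitting result cited in Remark~\ref{remark2} forces $F\cong L\oplus\cO_C$, and non-semistability gives $d_L>0$. Lemma~\ref{LNlem} fails here, so I would replace $F$ by $L$: viewing $L$ as a globally generated invertible subsheaf of $E$, one has $h^0(L)\geq 2$ and, using $d_L\leq\mu(E)\leq g-1$ together with Riemann--Roch, also $h^1(L)\geq 2$, so $L$ contributes to $\Cliff(C)$ and $\Cliff(L)\geq\Cliff(C)$. The crucial observation is $\dim W\leq h^0(F)=h^0(L)+1$, which yields $\lambda(F,W)\geq d_L/(h^0(L)-1)=\lambda(L)$. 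A computation parallel to Case~1 applied to the line bundle $L$, combined with the intermediate bound $h^0(L)\leq\dim V/2$ (deduced from $\Cliff(L)\geq\Cliff(C)$ together with $d_L\leq d_E/2$), then gives $\lambda(L)\geq\lambda(E,V)$.

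For the \emph{moreover} statement, with $V=H^0(E)$ and $E$ computing $\Cliff_2(C)$, specializing the chain above to $W=H^0(F)$ yields $\lambda(F)\geq\lambda(E)$. I would then trace the equality conditions: in Case~1 equality forces $\Cliff(F)=\Cliff(C)=\Cliff_2(C)$, which is the first exceptional possibility; in Case~2 equality forces $\Cliff(L)=\Cliff_2(C)$ together with $h^0(F)=h^0(E)$ and $\deg(L)=\deg(F)$, producing the globally generated line subbundle of the second exceptional possibility. The main obstacle in the argument is precisely the trivial-quotient case: Lemma~\ref{LNlem} is inapplicable, so one must reroute the Clifford bound through the destabilizing line subbundle $L$ and carefully invoke global generation of $F$ via $h^0(F)=h^0(L)+1$ to transfer the bound from $L$ back to $(F,W)$.
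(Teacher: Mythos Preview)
Your proof of the main inequality $\lambda(F,W)\geq\lambda(E,V)$ is correct and follows the same two-case split as the paper. In Case~2 you invoke the Ballico splitting $F\cong L\oplus\cO_C$ to obtain $\dim W\leq h^0(L)+1$, whereas the paper works with the subsystem $(L,\,H^0(L)\cap W)$; since $W$ generates $F$ and $H^0(F)/H^0(L)\hookrightarrow H^0(\cO_C)$ is at most one-dimensional, one has $\dim(H^0(L)\cap W)=\dim W-1$, so the two formulations give exactly the same bound and both reduce to the section estimate $2h^0(L)\leq\dim V$ of Lemma~\ref{Newsteadlem1}.

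There is a slip in your equality analysis for the ``moreover'' clause in Case~2. Tracing the chain
\[
\lambda(F)=\lambda(L)=2+\frac{\Cliff(L)}{h^0(L)-1}\ \geq\ 2+\frac{\Cliff(C)}{h^0(L)-1}\ \geq\ 2+\frac{2\Cliff(C)}{h^0(E)-2}\ \geq\ \lambda(E)
\]
to equality forces $\Cliff(L)=\Cliff(C)=\Cliff_2(C)$ and $2h^0(L)=h^0(E)$, \emph{not} $h^0(F)=h^0(E)$ as you claim; in fact $h^0(F)=h^0(L)+1=h^0(E)/2+1$. This $L$ therefore has $h^0(L)=h^0(F)-1$ and does not literally satisfy the proposition's stated second exception ``$h^0(L)=h^0(F)$''. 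The condition that actually emerges from the argument --- a line subbundle $L\subset E$ with $h^0(L)=h^0(E)/2$ --- is precisely the one used downstream in Theorem~\ref{indicecomputingthm1}; the proposition's phrasing of the second exceptional case appears to be imprecise on this point, so you should record the correct conclusion rather than trying to match it verbatim.
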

\begin{proof}
	If $F$ admits no trivial quotient, then by Lemma (\ref{LNlem}), we have 
	$\Cliff(F)\geq \Cliff(C)$.
	Therefore,
	$$d_F-2(\dim W-2)\geq d_F-2(h^0(F)-2)\geq 2\Cliff_2(C)\geq d_E-2(\dim V-2),
	$$
which implies
	$$\frac{d_F-2(\dim W-2)}{2(\dim W-2)}\geq \frac{d_E-2(\dim V-2)}{2(\dim W-2)}\geq \frac{d_E-2(\dim V-2)}{2(\dim V-2)}.$$
Thus, 
	$$\frac{1}{2}\lambda(F, W)-1\geq \frac{d_E-2(\dim V-2)}{2(\dim W-2)}> \frac{d_E-2(\dim V-2)}{2(\dim V-2)}=\frac{1}{2}\lambda(E, V)-1,$$
	as desired.
	
	If $F$ has a representation $0\to L\to F\to \mathcal{O}_C\to 0$, then 
	$$\lambda(F, W)\geq \lambda(L, H^0(L)\cap W).$$
	Since
	$h^0(L)\geq 2$ and $\mu(L)\leq \mu(E)\leq g-1$, the line bundle $L$ contributes to $\Cliff(C)$. Therefore, we have 
	$$d_L-2\dim [W\cap H^0(L)]+2\geq d_L-2h^0(L)+2\geq \Cliff_2(C)\geq \frac{d_E}{2}-\dim V+2, $$
	which implies 
	$$ \frac{d_L}{\dim W\cap H^0(L)-1}-2\geq \frac{d_E-2(\dim V-2)}{2(\dim W\cap H^0(L)-1)}$$
	Therefore, it suffices to prove 
	$$\frac{d_E-2(\dim V-2)}{2(\dim W\cap H^0(L)-1)}\geq \frac{d_E}{\dim V-2}-2,$$
which is equivalent to 
$$2\dim [W\cap H^0(L)] \leq \dim V.$$
	Thus, by Lemma (\ref{Newsteadlem1}), $\lambda(L, W\cap H^0(L))\geq \lambda(E, V)$.

The second statement follows immediately from our argument.
\end{proof}
Now, we turn to complete the proof of conjecture \ref{MSBconjecture}(i) in the case $n=2$.
Recall that any such bundle $E$, satisfies the following inequality by definition: 
\begin{align}\label{Cliffdefinequality}
d_E\leq 2h^0(E)+2\delta_1-8.
\end{align}
\begin{theorem}\label{indicecomputingthm1}
	Let $C$ be a non-hyperelliptic curve, and
	assume that $E$ computes $\Cliff_2(C)$ satisfying $5\leq h^0(E)$. Then, $E$ is linearly semistable.

	The bundle $E$ fails to be linearly stable if and only if either $E$ admits a rank two globally generated and locally free subsheaf $T$ with $\Cliff(T)=\Cliff_2(E)$, or $E$ contains a line subbundle $L$ with $h^0(L)=\frac{h^0(E)}{2}$.	
\end{theorem}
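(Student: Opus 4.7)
My strategy is to verify, for every globally generated coherent subsystem $(F,W)\subsetneq (E,H^0(E))$ with $\deg F>0$, that $\lambda(F,W)\geq \lambda(E)$, and then to characterize the equality cases as (i) or (ii). Since $E$ has rank two by Remark \ref{rem0}, only $\rk F\in\{1,2\}$ must be considered. Throughout I will use that $\dim W\leq h^0(F)$ implies $\lambda(F,W)\geq \lambda(F)$, so it often suffices to bound $\lambda(F)$.

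For the rank-one case, writing $F=L$, the bound $\lambda(L)\geq \lambda(E)$ is immediate from Corollary \ref{indicecomputingcor} applied with $n=2$. Tracing the chain of inequalities in that proof, equality $\lambda(L)=\lambda(E)$ should force both $2h^0(L)=h^0(E)$ and $\Cliff(L)=\Cliff_2(C)$; Lemma \ref{Newsteadlem} then yields $\deg L=d_E/2$, placing us in condition (ii).

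For the rank-two case with $F\subsetneq E$, I first observe that $\dim W<h^0(E)$, since otherwise the surjection $W\otimes \cO_C\to F$ composed with $F\hookrightarrow E$ would agree with the generation map of $E$, forcing $F=E$. When $F$ is non-semistable, Proposition \ref{indicecomputinglem1} applies---its hypothesis $d_E/2-h^0(E)+2=\Cliff_2(C)\leq \Cliff(C)$ is automatic---and its moreover statement, combined with $\dim W<h^0(E)$, gives strict inequality $\lambda(F,W)>\lambda(E)$ except when either $\Cliff(F)=\Cliff_2(C)$ (condition (i)), or $F$ admits a globally generated line subbundle $L$ with $h^0(L)=h^0(F)$ and $\deg L=\deg F$; in the latter case, $L$ is a line subbundle of $E$ and the rank-one analysis identifies it with condition (ii). When $F$ is semistable with $h^0(F)\geq 4$, the inequality $\Cliff(F)\geq \Cliff_2(C)$ combined with the higher-rank Clifford inequality $\Cliff(F)\geq 0$ leads, via a short manipulation using $d_E-d_F\leq 2(h^0(E)-h^0(F))$ and $\lambda(E)\geq 2$, to $\lambda(F)\geq \lambda(E)$, with the equality case forcing $\Cliff(F)=\Cliff_2(C)$ and placing us again in condition (i).

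The hardest part is the semistable rank-two subcase with $h^0(F)=3$, where $F$ does not contribute directly to $\Cliff_2(C)$ and the assumption $h^0(E)\geq 5$ must be used essentially. In this case $\dim W=3$ and $\lambda(F,W)=d_F$, while the Lazarsfeld--Mukai bundle $M_{F,W}$ is a line subbundle of $M_E$ of degree $-d_F$; equivalently, $N:=M_{F,W}^*$ is a degree-$d_F$ line bundle globally generated by three sections. My plan is to split according to $d_F$: when $d_F\leq g$, the inequality $h^1(N)=g+2-d_F\geq 2$ shows that $N$ contributes to $\Cliff(C)$, yielding $d_F\geq \Cliff(C)+4$, which combined with the rank-two Clifford inequality on $E$ gives $d_F\geq \lambda(E)$; when $d_F\geq g+1$, the estimate $\lambda(E)\leq d_E/3\leq 2(g-1)/3<g+1\leq d_F$ gives strict inequality directly. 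Tracing the equality cases here to match (i) or (ii) is where I anticipate the main technical difficulty.
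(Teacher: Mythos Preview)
Your proposal is correct and follows essentially the same architecture as the paper: rank-one subsheaves via Corollary~\ref{indicecomputingcor}, non-semistable rank-two subsheaves via Proposition~\ref{indicecomputinglem1}, and semistable rank-two subsheaves with $h^0(F)\geq 4$ via $\Cliff(F)\geq\Cliff_2(C)$.

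The only genuine divergence is in the semistable subcase $h^0(F)=3$. The paper observes directly that $h^0(\det F)\geq 2$, hence $d_F\geq\delta_1$; then assuming $d_F=\lambda(F)\leq\lambda(E)$ and using inequality~(\ref{Cliffdefinequality}) yields $\delta_1(h^0(E)-4)\leq 2(h^0(E)-4)$, which is absurd for $h^0(E)\geq 5$ and $C$ non-hyperelliptic. Your route via $N:=M_{F,W}^*$ is correct but more circuitous---note that in fact $N=\det F$ (take determinants in $0\to M_{F,W}\to W\otimes\cO_C\to F\to 0$), so you are really proving $h^0(\det F)\geq 3$, which is even stronger than what the paper uses. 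Your case split at $d_F\leq g$ versus $d_F\geq g+1$ and the detour through $\Cliff(N)\geq\Cliff(C)$ are then unnecessary: the single bound $d_F\geq\delta_1$ (or $\delta_2$) handles everything uniformly.

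Finally, your anticipated ``main technical difficulty'' in tracing equality cases for $h^0(F)=3$ does not arise: both your argument and the paper's yield \emph{strict} inequality $\lambda(F)>\lambda(E)$ in this subcase, so it contributes nothing to the characterization of when $E$ fails to be linearly stable.
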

\begin{proof}
Taking Proposition (\ref{indicecomputinglem1}) and Corollary (\ref{indicecomputingcor})
into account, it suffices to exclude the possibility that globally generated rank two locally free subsheaves of $E$, which are semistable, destabilize 
$E$ linearly.

Suppose $F\subset E$ is a globally generated, locally free, and semistable subsheaf of rank two. Then $h^0(F)\geq 3$ and $\mu(F)<\mu(E)\leq g-1$. We consider two cases:
	
First, assume that $h^0(F)\geq 4$. Then $F$ contributes to $\Cliff_2(C)$, and hence	$\Cliff_2(F)\geq \Cliff_2(E)$. Therefore, $\frac{d_F}{2}-(h^0(F)+2)\geq \frac{d_E}{2}-(h^0(E)-2)$,
 from which we it follows that
 $$\frac{d_F-2(h^0(F)-2)}{2(h^0(F)-2)}\geq \frac{d_E-2(h^0(E)-2)}{2(h^0(F)-2)} \geq \frac{d_E-2(h^0(E)-2)}{2(h^0(E)-2)}\cdot$$
Thus,
	$\lambda(F)\geq \lambda(E),$ as desired.
	
If $h^0(F)=3$, then 
$h^0(\det(F))\geq 2$, so $d_F\geq \delta_1$. Therefore the inequality $\lambda(F)=d_F\leq \lambda(E)$ together with (\ref{Cliffdefinequality}), implies 
$$\delta_1\leq \frac{2h^0(E)+2\delta_1-8}{h^0(E)-2}\comma$$
which is equivalent to
$$ \delta_1\cdot(h^0(E)-4)\leq 2(h^0(E)-4).$$
This is impossible since $h^0(E)\geq 5$ and $C$ is non-hyperelliptic. 

The second statement holds by our argument in this theorem and in the proof of Proposition (\ref{indicecomputinglem1}).
\end{proof}

\begin{theorem}\label{indicecomputingthm0}
	Let $E$ be a vector bundle computing $\Cliff_2(C)$ satisfying $h^0(E)=4$. Then, 
	$E$ is linearly semistable, and it is linearly stable if and only if $E$ does not admit any line subbundle $L$ with $h^0(L)=2$.
\end{theorem}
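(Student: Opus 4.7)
The plan is to classify all globally generated coherent subsystems $(F,W) \le (E,H^0(E))$ with $\deg F > 0$ and to show both that $\lambda(F,W) \ge \lambda(E) = d_E/2 = \Cliff_2(C)+2$, and that equality forces the existence of a line subbundle $L \subset E$ with $h^0(L)=2$. Since $\rk F \in \{1,2\}$ and $\dim W \le 4$, only a few configurations arise. The reverse direction is immediate: if $E$ contains a line subbundle $L$ with $h^0(L)=2$, Lemma \ref{Newsteadlem} gives $d_L = d_E/2$; such line bundles compute $\Cliff(C)$ and are therefore globally generated (otherwise subtracting base points would produce a smaller Clifford index), so $(L, H^0(L))$ is a valid subsystem with $\lambda(L) = \lambda(E)$, showing $E$ is not linearly stable.

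First I would treat the rank-one case. If $(F,W)$ is a rank-one subsystem, Lemma \ref{Newsteadlem1} gives $\dim W \le 2$, while $\deg F > 0$ forces $\dim W \ge 2$, so $\dim W = 2$ and $h^0(F) = 2$. Lemma \ref{Newsteadlem} then yields $d_F = d_E/2$, so $\lambda(F,W) = d_E/2 = \lambda(E)$: rank-one subsystems always produce equality and correspond bijectively to line subbundles of $E$ with $h^0 = 2$. For the rank-two case $F \subsetneq E$, the bound $h^0(F) \le h^0(E) = 4$ together with the observation that $H^0(F) = H^0(E)$ would force $F = E$ imply $h^0(F) = 3$ and $\dim W = 3$, so $\lambda(F,W) = d_F$. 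If $F$ is not slope-semistable, Proposition \ref{indicecomputinglem1} applies; its potential equality alternatives are incompatible with our setting, since $\Cliff(F) = \Cliff_2(C)$ forces $d_F = d_E - 2 \neq d_E/2$ (outside the degenerate range $d_E = 4$), and a line subbundle $L \subset F$ with $h^0(L) = h^0(F) = 3$ is ruled out by Lemma \ref{Newsteadlem1}. Thus $\lambda(F,W) > \lambda(E)$ strictly in this subcase.

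The main obstacle is the rank-two semistable subcase. Here $\det F$ is globally generated as a quotient of $\Lambda^2(W \otimes \mathcal{O}_C) = \mathcal{O}_C^{\oplus 3}$, and Clifford's inequality for semistable rank-two bundles forces $d_F \ge 2(h^0(F)-2) = 2$, so $\det F \neq \mathcal{O}_C$ and $h^0(\det F) \ge 2$. If $d_F > g-1$, then $d_F > g-1 \ge d_E/2$ and $\lambda(F,W) > \lambda(E)$ trivially. Otherwise $d_F \le g-1$, and Riemann--Roch gives $h^1(\det F) \ge 2$, so $\det F$ contributes to $\Cliff(C)$; the inequality $d_F - 2h^0(\det F) + 2 \ge \Cliff(C) \ge \Cliff_2(C)$ yields $d_F \ge \Cliff(C) + 2 \ge d_E/2$. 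For the equality $d_F = d_E/2$ one must have $\Cliff(C) = \Cliff_2(C)$ and $h^0(\det F) = 2$, so the natural map $\Lambda^2 H^0(F) \to H^0(\det F)$ has a one-dimensional kernel. Since every element of $\Lambda^2 \mathbb{C}^3$ is automatically decomposable, this kernel is spanned by some $u \wedge v$ with $u, v \in H^0(F)$ linearly independent; then $u, v$ span a rank-one subsheaf of $F$ whose saturation in $E$ is a line subbundle $L \subset E$ with $h^0(L) \ge 2$. Lemma \ref{Newsteadlem1} forces $h^0(L) = 2$, producing the required line subbundle and completing the proof.
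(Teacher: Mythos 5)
Your argument is correct and reaches the stated dichotomy, but it takes a genuinely different route through the decisive rank-two step. The paper treats \emph{all} rank-two subsheaves $F$ uniformly and in one line: since $F$ is generated with $h^0(F)=3$ and $\deg F>0$, the quotient $\det F$ of $\Lambda^2(W\otimes\mathcal{O}_C)$ is generated and non-trivial, so $h^0(\det F)\geq 2$ and $\deg F\geq \delta_1\geq \lambda(E)$ — no semistability split, no case $d_F>g-1$, and no appeal to the Clifford contribution of $\det F$. It then characterizes the equality case by showing $F$ must be an extension of $L$ by $\mathcal{O}_C$ or of $\mathcal{O}_C$ by $L$ and disposing of the first possibility with the base-point-free pencil trick. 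You instead split into semistable and non-semistable $F$, handle the former via the Clifford inequality for $\det F$ (hence the auxiliary case $d_F>g-1$), and in the equality case extract the pencil from the kernel of $\Lambda^2 H^0(F)\to H^0(\det F)$ using the decomposability of every element of $\Lambda^2\mathbb{C}^3$. Your wedge argument produces the line subbundle with $h^0=2$ more transparently than the paper's extension analysis, while the paper's $\delta_1$-bound is shorter and avoids your case distinctions. Two small caveats on your side: Proposition \ref{indicecomputinglem1}, which you invoke for non-semistable $F$, is stated for non-hyperelliptic curves, whereas Theorem \ref{indicecomputingthm0} carries no such hypothesis; and the "degenerate range" $d_E=4$ (equivalently $\Cliff_2(C)=0$) that you set aside is exactly the hyperelliptic situation, where Re's classification gives $E\cong g^1_2\oplus g^1_2$ and the conclusion holds for free — worth one sentence to close the loop.
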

\begin{proof}
	
We have $d_E\leq 2\delta_1$. By Lemma (\ref{Newsteadlem1}) and semistability of $E$, the only possiblity for a non-trivial line subbundle $L$ of $E$ to be globally generated is $h^0(L)=2$ and $\deg(L)=\delta_1$. So $\lambda(L)=\lambda(E)$ holds for any globally generated invertible subsheaf $L\subset E$.  
	
If $F$ is a globally generated locally free subsheaf of $E$ satisfying $\rk F=2$ and $\deg(F)>0$, then the inequality $\deg(F)\geq \delta_1$ holds by $h^0(F)=3$. From $\delta_1\leq \deg(F)=\lambda(F)$ and $\lambda(E)\leq \delta_1$, we conclude that
$\lambda(F)\geq \lambda(E)$. Furthermore, the equality $\lambda(F)=\lambda(E)$ holds if and only if $F$ has a representation either as 
$0\rightarrow \mathcal{O}_C\rightarrow F\rightarrow L \rightarrow 0$ or as $ 0\rightarrow L\rightarrow F\rightarrow  \mathcal{O}_C \rightarrow 0$.

 The sequence in the first case must induce an exact sequence on global sections, since $h^0(F)=3$, and we have  $h^0(\mathcal{O}_C)+h^0(L)\leq 3$. However, the map $H^0(L)\otimes H^0(L)\rightarrow H^0(K\otimes L)$ is surjective by the base point free pencil trick. Therefore, the only possibility is $F=\mathcal{O}_C\oplus L$. In this case, again $E$ has a line subbundle as stated.

In the second case $L$ is a subsheaf of $E$ and is actually a line subbundle. Therefore, $\lambda(L)=\lambda(E)$.
\end{proof}
\begin{corollary}
Let $C$ be a non-hyperelliptic curve, and
suppose $E$ computes $\gamma_2(C)$, with $\gamma_2(C)$ as in Definition (\ref{Cliffordindices}). 
 Then, $E$ is linearly semistable.
Furthermore, the bundle $E$ fails to be linearly stable if and only if $E$ has a rank two locally free subsheaf  $T$ with $\Cliff(T)=\Cliff_2(E)$, or $E$ contains a line subbundle $L$ with $h^0(L)=\frac{h^0(E)}{2}$.	
\end{corollary}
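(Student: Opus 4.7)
The strategy is to reduce to Theorems \ref{indicecomputingthm1} and \ref{indicecomputingthm0} whenever $h^0(E)\geq 4$, and to treat the case $h^0(E)=3$ directly. From Definition \ref{Cliffordindices}, one has $\gamma_2(C)\leq \Cliff_2(C)$, since the infimum defining $\Cliff_2(C)$ is taken over a subfamily of the bundles used to define $\gamma_2(C)$. Therefore, if $E$ computes $\gamma_2(C)$ and $h^0(E)\geq 4$, then $E$ itself lies in the family defining $\Cliff_2(C)$, so $\Cliff_2(C)\leq \Cliff(E)=\gamma_2(C)\leq \Cliff_2(C)$, forcing $E$ also to compute $\Cliff_2(C)$. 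In this range the assertion of the corollary follows verbatim from Theorem \ref{indicecomputingthm1} (when $h^0(E)\geq 5$) or Theorem \ref{indicecomputingthm0} (when $h^0(E)=4$).

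To handle $h^0(E)=3$, first note that $\gamma_2(C)\leq \Cliff(C)$: indeed, for any line bundle $L$ computing $\Cliff(C)$ with $\deg L\leq g-1$, the semistable rank-two bundle $L^{\oplus 2}$ satisfies $\Cliff(L^{\oplus 2})=\Cliff(L)=\Cliff(C)$ and appears in the family defining $\gamma_2(C)$. Consequently, the numerical hypothesis of Lemma \ref{Newsteadlem1} with $V=H^0(E)$ and $n=2$ is satisfied, namely $\tfrac{d_E}{2}-h^0(E)+2=\Cliff(E)=\gamma_2(C)\leq \Cliff(C)$. By that lemma, every invertible subsheaf $L\subset E$ satisfies $h^0(L)\leq h^0(E)/2=3/2$, and hence $h^0(L)\leq 1$. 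This rules out any generated line subsystem of positive degree, since such a subsystem necessarily has $h^0\geq 2$. Moreover, for a generated rank-two subsystem $(F,W)$ with $W\subseteq H^0(E)$ one must have $\dim W\geq 3=h^0(E)$, so $W=H^0(E)$; surjectivity of the evaluation morphism then forces $F=E$. Thus $E$ admits no proper positive-degree generated subsystem, and is in fact linearly stable.

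It remains to verify that neither exceptional condition of the corollary can occur when $h^0(E)=3$: the equality $h^0(L)=h^0(E)/2=3/2$ fails integrality, and a proper semistable rank-two locally free subsheaf $T\subsetneq E$ with Clifford invariant equal to that of $E$ would satisfy $d_T<d_E$ and $h^0(T)\leq h^0(E)$, producing $\Cliff(T)<\Cliff(E)=\gamma_2(C)$ and contradicting the infimum defining $\gamma_2(C)$. The subtle point in this plan is precisely the rank-two subsystem analysis in the $h^0(E)=3$ case: although Lemma \ref{Newsteadlem1} only controls invertible subsheaves, the generation requirement forces any rank-two subsystem to exhaust $H^0(E)$ and hence collapse to $E$ itself, a step I expect to be the main obstacle when one tries to state this corollary without an implicit hypothesis of global generation (which however is automatic in the range $h^0(E)\geq 4$ by Remark \ref{rem0}).
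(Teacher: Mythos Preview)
Your reduction in the case $h^0(E)\geq 4$ is exactly the paper's argument. For $h^0(E)=3$, however, the paper takes a much shorter route: since $\rk M_E=h^0(E)-2=1$, the Lazarsfeld--Mukai bundle $M_E$ is a line bundle, hence automatically stable, and stability of $M_E$ implies linear stability of $(E,H^0(E))$. Your approach---using Lemma \ref{Newsteadlem1} to rule out invertible subsystems and a direct dimension count to force any rank-two generated subsystem to equal $E$---is also correct (granting, as you note, the implicit global generation of $E$ required even to speak of linear stability), but it is considerably longer. The paper's observation has the advantage of bypassing all subsystem analysis; your argument has the merit of being self-contained within the linear-stability framework, not invoking the $M_E$\,/\,linear-stability implication.

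One small slip in your final paragraph: from $d_T<d_E$ and $h^0(T)\leq h^0(E)$ you cannot conclude $\Cliff(T)<\Cliff(E)$ (take $d_T=d_E-2$, $h^0(T)=h^0(E)-1$), and in any case deriving $\Cliff(T)<\Cliff(E)$ from the hypothesis $\Cliff(T)=\Cliff(E)$ is circular. The correct way to exclude the rank-two exceptional condition when $h^0(E)=3$ is the one you already gave for subsystems: a non-trivially generated rank-two $T$ needs $h^0(T)\geq 3=h^0(E)$, forcing $H^0(T)=H^0(E)$ and hence $T=E$ once $E$ is generated. This is minor and does not affect your main conclusion.
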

\begin{proof}
If $h^0(E)\geq 4$, then $E$ computes $\Cliff_2(C)$ as well. Therefore, the result follows from Theorems (\ref{indicecomputingthm1}) and (\ref{indicecomputingthm0}). 

If $h^0(E)=3$, then $M_E$ is a line bundle and thus stable. Therefore, $E$ is linearly stable. 
\end{proof}

\begin{remark}
If $C$ is hyperelliptic, and $E$ is semistable with $\Cliff(E)=0$, then $E=mg^1_2\oplus mg^1_2$, for $1\leq m< g-1$, by \cite[Proposition 2]{Re}. So, $E$ is linearly semistable, but not linearly stable. 
\end{remark}

\section{Conjecture \ref{MSBconjecture} in two special cases}\label{equivalencesection}
\subsubsection{Bundles admitting a subpencil}

In this subsection, we establish 
the (MS) conjecture for rank two bundles that admit a line subbundle $L$ with $h^0(L)=2$. 

\begin{definition}
	A bundle $E$ is said to admit a subpencil if there exists a line subbundle $L\subset E$ such that $h^0(L)=2$.
\end{definition}

The following lemma is a restatement of \cite[Lemma 4.3]{MS} for higher ranks, and we omit its proof.
\begin{lemma}\label{finalthrm}
	Suppose $E$ computes $\Cliff_n(C)$ 
 and for $S\subset M_E$ the exact sequence 
 \begin{align}\label{exactsequence}
 	0\rightarrow \oplus^{r_{F_S}-1}\mathcal{O}\rightarrow F_S \rightarrow \det F_S\rightarrow 0,
 	 \end{align}
 induces an exact sequence on global sections.
  Assume $\deg(E)\leq \delta_1\cdot(h^0(E)-n)$ and $\rk(F_S)\geq 2$. Then $\mu(S)\leq \mu(M_E)$ and the equality holds if and only if 
 \begin{itemize}
\item $W=H^0(F_S)$, with $W$ as in Diagram \eqref{Butlerdiagram}, and 
 	
\item $\delta_1=\frac{\deg(F_S)}{h^0(\det F_S)-1}=\frac{\deg(E) }{h^0(E)-n}$. 	
 \end{itemize}
\end{lemma}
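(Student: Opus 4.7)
The plan is to translate the slope comparison $\mu(S)\le\mu(M_E)$ into a linear-slope inequality for $F_S$ against $E$, and then exploit the splitting of $H^0(F_S)$ provided by the hypothesis to reduce the question to the rank-one case handled by [MS, Lemma 4.3], of which the present statement is the higher-rank restatement.

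First, from the upper row $0\to S\to W\otimes\mathcal{O}_C\to F_S\to 0$ of the Butler diagram \eqref{Butlerdiagram} I would read off $\deg S=-\deg F_S$ and $\rk S=\dim W-\rk F_S$. Since $V=H^0(E)$, passing to global sections in the lower row gives $H^0(M_E)=0$, hence $H^0(S)=0$, and so $W\hookrightarrow H^0(F_S)$. Writing $r:=\rk F_S$, the hypothesis that $0\to \mathcal{O}_C^{\oplus r-1}\to F_S\to\det F_S\to 0$ is exact on global sections yields
$$h^0(F_S)=(r-1)+h^0(\det F_S),\qquad \deg F_S=\deg\det F_S.$$
Combined with $\dim W\le h^0(F_S)$, this gives $\dim W-r\le h^0(\det F_S)-1$ and hence
$$\frac{\deg F_S}{\dim W-r}\;\ge\; \frac{\deg\det F_S}{h^0(\det F_S)-1},$$
with equality if and only if $W=H^0(F_S)$. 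The claim $\mu(S)\le\mu(M_E)$ is equivalent to $\tfrac{\deg F_S}{\dim W-r}\ge\tfrac{\deg E}{h^0(E)-n}$, and since the right-hand side is at most $\delta_1$ by hypothesis, the proof reduces to the rank-one linear-slope bound
$$\frac{\deg\det F_S}{h^0(\det F_S)-1}\;\ge\;\delta_1.$$

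The final step is to apply [MS, Lemma 4.3] to the complete generated linear system $(\det F_S,H^0(\det F_S))$: this gives the displayed inequality together with its equality characterization $\lambda(\det F_S)=\delta_1$. Tracing equality through the argument, $\mu(S)=\mu(M_E)$ then simultaneously forces $W=H^0(F_S)$, $\lambda(\det F_S)=\delta_1$, and $\lambda(E)=\delta_1$, which is precisely the stated characterization. The main obstacle is to verify that $\det F_S$ lies in the range of applicability of [MS, Lemma 4.3], that is, that $\deg\det F_S-2(h^0(\det F_S)-1)\le\Cliff(C)$; this must be extracted from the Butler diagram together with the hypotheses that $E$ computes $\Cliff_n(C)$ and $\mu(E)\le g-1$, using Lemma \ref{Newsteadlem} and the assumption $r\ge 2$ to convert the Clifford information on $E$ into Clifford information for the quotient line bundle $\det F_S$.
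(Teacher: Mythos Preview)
Your reduction is exactly right up to the last step: translating $\mu(S)\le\mu(M_E)$ into $\lambda(F_S,W)\ge\lambda(E)$, then using the exactness hypothesis to get $\dim W-r\le h^0(\det F_S)-1$ and hence $\lambda(F_S,W)\ge\lambda(\det F_S)$, leaves precisely the rank-one bound $\deg(\det F_S)\ge\delta_1\,(h^0(\det F_S)-1)$. This is also how the argument in \cite[Lemma~4.3]{MS} runs; the paper omits the proof because it carries over verbatim from $n=1$ to general $n$.

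The gap is in how you propose to close that last bound. Invoking \cite[Lemma~4.3]{MS} on the line bundle $\det F_S$ is circular: that lemma is the $n=1$ case of the statement you are proving, not a stand-alone estimate on $\lambda$ of a line bundle. Moreover, the hypothesis you aim to verify, $\Cliff(\det F_S)\le\Cliff(C)$, points the wrong way: a small Clifford index means many sections, hence \emph{small} $\lambda$ (for instance $\lambda(K_C)=2$), so it could never yield $\lambda(\det F_S)\ge\delta_1$. The correct input is the Castelnuovo-type bound \cite[Proposition~4.2]{MS}: if the multiplication map $H^0(L)\otimes H^0(K)\to H^0(K\otimes L)$ is not surjective, then $\deg L\ge\delta_1(h^0(L)-1)$. (The paper itself invokes exactly this in the last line of the proof of Theorem~\ref{specialcase}.) For $L=\det F_S$ the non-surjectivity comes straight from the Butler-diagram property $H^0(F_S^*)=0$ together with $r\ge 2$: dualizing the sequence shows the $r-1$ extension classes are linearly independent in $H^1((\det F_S)^{-1})\cong H^0(K\otimes\det F_S)^*$, and exactness on global sections says each of them annihilates the image of the multiplication map, so that image has codimension at least $r-1\ge 1$. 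No Clifford comparison for $\det F_S$, and no use of Lemma~\ref{Newsteadlem}, is needed here.
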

\begin{theorem}\label{specialcase}
Let $E$ compute $\Cliff_2(C)$ and $S\subset M_E$
is of maximal slope with $\rk F_S\geq 2$.  If $S$ destabilizes $M_E$, that is $\mu(S)\geq \mu(M_E)$, 
and $\rk(I)=1$, where $I:=\im(\alpha_S)$, then the sequence (\ref{exactsequence}) induces exact sequence on global sections. Moreover, $\deg(F_S)\geq \delta_1\cdot(h^0(\det(F_S))-1)$. 
\end{theorem}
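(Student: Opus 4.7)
The plan is to exploit the rank-one hypothesis on $I := \im(\alpha_S)$ to decompose the Butler diagram into a short exact sequence
$$0 \to N_S \to F_S \to I \to 0, \qquad \rk(N_S) = r-1, \quad r := \rk(F_S),$$
and then combine the destabilizing assumption with the linear semistability of $E$ — granted by Theorem \ref{indicecomputingthm1} — to force $N_S$ to be trivial. Since $I$ is torsion-free of rank one it is a line bundle, and because $F_S$ is globally generated by $W$ (Property \ref{Butler_prop}(1)), $I$ is generated by $W_I := \im(W \to H^0(I))$, so $(I, W_I)$ is a globally generated coherent subsystem of $(E, H^0(E))$.

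I would next record the three numerical inequalities available in this setup. Linear semistability of $E$ applied to $(I, W_I)$ gives $\deg(I)/(\dim W_I - 1) \geq \lambda(E)$; the destabilizing assumption $\mu(S) \geq \mu(M_E)$, once rewritten using $\deg(S) = -\deg(F_S)$ and $\rk(S) = \dim W - r$, becomes $\deg(F_S)/(\dim W - r) \leq \lambda(E)$; and Property \ref{Butler_prop}(4), applicable when $r \geq 3$, yields $\deg(F_S)(\dim W - 1) \leq (\dim W - r)\deg(I)$. The case $r = 2$ is to be handled by a direct argument on the rank-one bundle $N_S$. Chaining these inequalities together, together with the injection $W \hookrightarrow H^0(F_S)$ coming from $H^0(S) \subseteq H^0(M_E) = 0$, the plan is to force $\dim W_I = \dim W$, $W_I = H^0(I)$ and $\deg(N_S) = 0$; then the $r-1$ missing generators in $H^0(F_S)\setminus W$ provide an inclusion $\mathcal{O}_C^{r-1} \hookrightarrow N_S$ of sheaves of the same rank, so $N_S \cong \mathcal{O}_C^{r-1}$.

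Once $N_S \cong \mathcal{O}_C^{r-1}$, the sequence $0 \to N_S \to F_S \to I \to 0$ coincides with (\ref{exactsequence}), so $\det(F_S) = I$, and its exactness on global sections reduces to the already-established surjection $W \twoheadrightarrow H^0(I) = H^0(\det F_S)$. For the degree bound, the line bundle $\det(F_S) = I$ is globally generated, and the desired $\deg(F_S) = \deg(I) \geq \delta_1\cdot(h^0(\det F_S)-1)$ would follow by applying Lemma \ref{Newsteadlem} to the saturation of $I$ inside $E$, which identifies $\Cliff(I) = \Cliff(C)$ when $h^0(I) \geq 2$, combined with the elementary bound $\deg(I) \geq \delta_1$ for any pencil subsheaf.

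The hardest part will be the rigidity step — simultaneously forcing $\dim W_I = \dim W = h^0(I)$ and $\deg(N_S) = 0$ from the three numerical inequalities — because these must conspire to pin down the extremal case. A secondary obstacle is the final comparison with $\delta_1$: since $\delta_r \geq r\delta_1$ fails in general on a general curve, the uniform inequality seems to require an intermediate step showing $h^0(I) \leq 2$, which is consistent with Lemma \ref{Newsteadlem} and the Clifford bound $\Cliff(C) \leq \delta_1 - 2$ under the present hypotheses.
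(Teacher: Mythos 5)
Your proposal rests on a structural identification that fails: you take the sequence \eqref{exactsequence} to be the kernel--image sequence $0\to N_S\to F_S\to I\to 0$ of $\alpha_S$, and plan to force $N_S\cong\mathcal{O}_C^{r-1}$ and $\det F_S\cong I$. But \eqref{exactsequence} is the standard presentation of the globally generated bundle $F_S$ obtained from $r-1$ \emph{general sections}, whose quotient is the determinant line bundle; it has nothing to do with $\ker\alpha_S$. In fact $N_S$ cannot be trivial in this situation: by the Snake Lemma $N_S\hookrightarrow Q_S=M_E/S$ (Remark \ref{remark10}), and for $S$ of maximal slope one has $H^0(Q_S)=0$ (Lemma \ref{Abel}), so $h^0(N_S)=0$. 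Your mechanism for producing the inclusion $\mathcal{O}_C^{r-1}\hookrightarrow N_S$ — the ``missing generators'' in $H^0(F_S)\setminus W$ — also does not work, since a section of $F_S$ outside $W$ need not be killed by $\alpha_S$ (and, as just noted, no nonzero section is). So the rigidity step you identify as the hardest part is not merely hard; its target statement is false, and with it the reduction of global-section exactness to the surjectivity of $W\to H^0(I)$ collapses. The paper's actual argument is purely numerical and does not touch $N_S$: it first shows $\deg F_S\leq\deg I\leq\deg(E)/2\leq g-1$ (using that $M_{I,W}$ is a proper subbundle of $M_E$, so $\mu(M_{I,W})\leq\mu(S)$), deduces that $\det F_S$ contributes to $\Cliff(C)$ and hence $h^0(E)\geq 2h^0(\det F_S)$, and then shows that failure of exactness on global sections of \eqref{exactsequence}, combined with $\mu(S)\geq\mu(M_E)$, would force $\deg(\det F_S)<2(h^0(\det F_S)-1)+\Cliff_2(C)$, contradicting the Clifford bound.

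Your treatment of the final inequality is also inadequate. The claim is $\deg(F_S)\geq\delta_1\cdot(h^0(\det F_S)-1)$, where $h^0(\det F_S)$ may well exceed $2$; proving $\deg(I)\geq\delta_1$ together with $h^0(I)\leq 2$ (even if you could) would not yield this, and Lemma \ref{Newsteadlem} applies to line subbundles of a bundle computing $\Cliff_2(C)$, not to the quotient $\det F_S$ of $F_S$. The paper obtains the bound as a consequence of the just-established exactness on global sections via the Castelnuovo-type estimate of \cite[Prop.\ 4.2]{MS}; some such external input is needed here, and your outline does not supply it.
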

\begin{proof}
The proof proceeds along the same lines as the argument in Theorem 5.1 of \cite{MS}.

	By semistability of $E$, we have $\deg(I)\leq \frac{\deg(E)}{2}$. Since $I\neq E$, it follows that $M_{I, W}$ is a proper subbundle of $M_E$. So $\mu(M_{I, W})\leq \mu(S)$, and consequently,  $\deg(F_S)\leq \deg(I)$. Therefore, 
	\begin{align}\label{inequality2}
		\deg(F_S)\leq\frac{\deg(E)}{2}\leq g-1.
	\end{align}
Hence $\det(F_S)$ contributes to $\Cliff(C)$, because it is non-trivially generated. Thus,
	\begin{align}\label{inequality3}
		\deg(\det(F_S))-2h^0(\det(F_S))+2\geq \Cliff(C)\geq \frac{\deg(E)}{2}-h^0(E)+2=\Cliff_2(C).
	\end{align}
	This, by (\ref{inequality2}) implies that $h^0(E)\geq 2\cdot h^0(\det(F_S)).$ In particular, we have
	\begin{align}\label{inequality4}
		\frac{h^0(\det(F_S))-1}{h^0(E)-2}\leq\frac{1}{2}\cdot
	\end{align}
	Now, we prove that the exact sequence 
\begin{align}\label{exactsequence3}
0\rightarrow \oplus^{r_S-1}\mathcal{O}_C\rightarrow F_S\rightarrow \det F_S\rightarrow 0
\end{align}
 induces an exact sequence on global sections. Otherwise, we would have
	$$h^0(\det F_S)-1>h^0(F_S)-\rk(F_S)\geq \rk S  \geq \deg (\det F_S))\cdot\frac{h^0(E)-2}{\deg(E)}\cdot$$
	From this, and since we have $\frac{\deg(E)}{h^0(E)-2}=2+\frac{2\Cliff_2(C)}{h^0(E)-2}$ by definition of $\Cliff_2(C)$, it follows that,
	$$\deg(\det F_S)<\frac{\deg(E)}{h^0(E)-2}\cdot(h^0(\det F_S)-1)=(h^0(\det F_S)-1)\left(2+\frac{2\Cliff_2(C)}{h^0(E)-2}\right)=$$
	$$2(h^0(\det F_S)-1)+2\cdot\left(\frac{h^0(\det F_S)-1}{h^0(E)-2}\right)\cdot \Cliff_2(C).$$
	In combination with inequality (\ref{inequality4}), this yields 
\begin{align}\label{degF_S}
\deg(\det F_S)<2(h^0(\det F_S)-1)+\Cliff_2(C).    
\end{align}
	This contradicts (\ref{inequality3}). Therefore, the exact sequence (\ref{exactsequence3}) induces an exact sequence on global sections.

By the  exactness of the sequence of global sections of (\ref{exactsequence3}), last assertion follows  from \cite[Prop 4.2]{MS}.
\end{proof}

\begin{remark}\label{remark3}
Recall that if $E$ computes $\Cliff_2(C)$ then, since $h^0(E)\geq 4$, we have
$$\deg(E)\leq 2\delta_1-4+2h^0(E)-4\leq 2(\delta_1-2)\cdot \left (\frac{h^0(E)-2}{2}\right)+2h^0(E)-4=\delta_1\cdot (h^0(E)-2),$$	
with equality holding if and only if either $C$ is hyperelliptic and $d_E=2(h^0(E)-2)$, or $h^0(E)=4$.
\end{remark}

\begin{corollary}\label{rank1cor}
Let $E$ compute $\Cliff_2(C)$ and $S\subset M_E$
is of maximal slope. If $E$ is linearly stable, then
we have $\mu(S)< \mu(M_E)$. 
\end{corollary}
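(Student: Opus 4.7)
The plan is to argue by contradiction: assume $\mu(S) \ge \mu(M_E)$ and show this forces $E$ to fail linear stability. Translating through the top row $0 \to S \to W \otimes \mathcal{O}_C \to F_S \to 0$ of the Butler diagram, the inequality is equivalent to $\lambda(F_S,W) := \deg F_S/(\dim W - \rk F_S) \le \lambda(E)$. I would split the analysis according to $\rk F_S$ and $\rk I$, with the decisive case being $\rk F_S \ge 2$ and $\rk I = 1$.

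Two easy cases can be dispatched directly. If $\rk F_S = 1$, then by Properties \ref{Butler_prop}(2) the morphism $\alpha_S$ is a nonzero map of line bundles, hence injective, so $F_S$ embeds in $E$ as a globally generated line subsheaf; the resulting subsystem $(F_S, W) \le (E, H^0(E))$ violates linear stability. If $\rk F_S = 2$ and $\rk I = 2$, then $F_S \twoheadrightarrow I$ is a surjection of rank-two locally free sheaves on a smooth curve, hence an isomorphism, and the same contradiction is obtained from $(I, W) \cong (F_S, W)$.

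The heart of the argument is the case $\rk F_S \ge 2$, $\rk I = 1$. Theorem \ref{specialcase} applies, giving that the sequence $0 \to \mathcal{O}_C^{\rk F_S - 1} \to F_S \to \det F_S \to 0$ induces an exact sequence of global sections and that $\deg F_S \ge \delta_1(h^0(\det F_S) - 1)$. Combined with the bound $\deg E \le \delta_1(h^0(E) - 2)$ of Remark \ref{remark3}, Lemma \ref{finalthrm} yields $\mu(S) \le \mu(M_E)$, and the standing assumption then forces equality; the equality conditions require $W = H^0(F_S)$ and $\lambda(E) = \delta_1$. Using $\lambda(E) = 2 + 2\Cliff_2(C)/(h^0(E)-2)$ together with $\Cliff_2(C) \le \Cliff(C) \le \delta_1 - 2$, I deduce $h^0(E) = 4$ and $\Cliff(C) = \delta_1 - 2$; the hyperelliptic alternative $\delta_1 = 2$ is incompatible with $E$ being linearly stable, by the remark following Theorem \ref{indicecomputingthm0}.

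The final contradiction comes from the line subsheaf $I := \im \alpha_S \subseteq E$. It is globally generated by $W = H^0(F_S)$, of dimension $h^0(\det F_S) + \rk F_S - 1 \ge 3$, so $h^0(I) \ge 3$; semistability of $E$ gives $\deg I \le \mu(E) = \delta_1$. If $h^1(I) \ge 2$, then $I$ contributes to $\Cliff(C) = \delta_1 - 2$, so $\deg I \ge 2h^0(I) + \Cliff(C) - 2 \ge \delta_1 + 2$, contradicting $\deg I \le \delta_1$. Otherwise $h^1(I) \le 1$, and Riemann--Roch gives $h^0(I) \le \delta_1 - g + 2$, so $h^0(I) \ge 3$ forces $\delta_1 \ge g+1$, violating the Brill--Noether bound $\delta_1 \le \lfloor (g+3)/2 \rfloor$ for any smooth curve of genus $g \ge 2$. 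The hardest step I anticipate is handling the residual configuration $\rk F_S \ge 3$, $\rk I = 2$, which would require combining Properties \ref{Butler_prop}(4) with Clifford-index bounds on the rank-two subsheaf $I$ to reduce to the same gonality estimate.
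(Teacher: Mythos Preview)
Your case-split and the reduction via Theorem \ref{specialcase}, Lemma \ref{finalthrm}, and Remark \ref{remark3} to the equality situation $\deg E = \delta_1(h^0(E)-2)$ is exactly the paper's argument. The cases $\rk F_S = 1$ and $\rk F_S = 2$ with $\alpha_S$ injective are handled the same way (the paper absorbs the second into ``follows directly from linear stability'').

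Where you diverge is in extracting the final contradiction when $h^0(E)=4$. You push $h^0(I)\ge \dim W\ge 3$ through a Clifford/gonality dichotomy on $I$. The paper's route is shorter and makes direct use of the hypothesis: it passes to the saturation $\bar I_S\subset E$, invokes Lemma \ref{Newsteadlem1} to force $h^0(\bar I_S)=2$, and then combines semistability ($\deg\bar I_S\le d_E/2$) with Corollary \ref{indicecomputingcor} ($\lambda(\bar I_S)\ge\lambda(E)=d_E/2$) to get $\lambda(\bar I_S)=\lambda(E)$, contradicting linear \emph{stability}. Note, incidentally, that your own observation $h^0(I)\ge 3$ already collides with Lemma \ref{Newsteadlem1}, which would give $h^0(\bar I_S)\le h^0(E)/2=2$; so you could shortcut your entire Clifford/gonality computation.

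The residual case you flag, $\rk F_S\ge 3$ and $\rk I=2$, is a genuine loose end---and the paper's proof shares it. The paper invokes Theorem \ref{specialcase} to feed into Lemma \ref{finalthrm}, but Theorem \ref{specialcase} carries the hypothesis $\rk I=1$; no separate argument is given for $\rk I=2$ with $\rk F_S\ge 3$. (The corollary's label \texttt{rank1cor} is suggestive, and its only later use, in Theorem \ref{MSconjecturerank=2} Case (i), is precisely under $\rk\alpha_S=1$.) So your instinct that this configuration needs additional work is correct; the paper simply does not supply it either.
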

\begin{proof}
If $\rk(F_S)=1$, then the assertion follows directly from the definition of linear stability. If $\rk(F_S)\geq 2$, and $\mu(S)\geq \mu(M_E)$, then the only possibility--by
Lemma \ref{finalthrm}, Theorem \ref{specialcase} and Remark \ref{remark3}--is that $\mu(S)=\mu(M_E)$, in which case, 
 $$\deg(E)=\delta_1\cdot (h^0(E)-2).$$ 	
Remark \ref{remark3} then implies that 
either $C$ is hyperelliptic and $d_E=2(h^0(E)-2)$, or $h^0(E)=4$.
If $h^0(E)=4$, then $\overline{I}_S$, the saturation of $I_S$,  is a line subbundle of $E$. Since $h^0(I_S)\geq 2$ we also have $h^0(\overline{I}_S)\geq 2$. Lemma \ref{Newsteadlem1} implies that $h^0(\overline{I}_S)\leq 2$, and so $h^0(\overline{I}_S)=2$. As $E$ is semistable we have $\deg(\overline{I}_S)\leq \frac{d_E}{2}$. Moreover, since $E$ is linearly semistable, results from the previous section, imply that $\deg(\overline{I}_S)=\lambda(\overline{I}_S)\geq\lambda(E)$. Summarizing we obtain $\lambda(\overline{I}_S)=\lambda(E)$, which contradicts the assumption that $E$ is linearly stable.

If $C$ is hyperelliptic and $d_E=2(h^0(E)-2)$, then by \cite[Proposition 2]{Re} we have $E=g^1_2\oplus g^1_2$, which again contradicts the  linear stability of $E$.
\end{proof}
\begin{lemma}\label{Newsteadlem3}
	Let $E$ be a rank two vector bundle computing $\Cliff_2(C)$, and let $L\subset E$ be a line subbundle with $h^0(L)\geq 2$. Then, we have 
	$$\lambda(L)\geq \lambda(E)\geq \lambda(G),$$
	 where $G:=\frac{E}{L}$.
\end{lemma}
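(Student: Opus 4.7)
The plan is to reduce everything to a transparent arithmetic identity, after extracting the right structural information from the previous lemmas. First I would invoke Lemma \ref{Newsteadlem}: since $E$ computes $\Cliff_2(C)$ and carries the pencil $L$ with $h^0(L)\geq 2$, we obtain simultaneously the Clifford-index equalities
\[
\Cliff(L)=\Cliff(G)=\Cliff(E)=\Cliff(C)
\]
and the additivity of sections $h^0(E)=h^0(L)+h^0(G)$. Next I would apply Lemma \ref{Newsteadlem1} with $V=H^0(E)$ and $n=2$; its hypotheses are satisfied because $E$ is semistable with $\mu(E)\leq g-1$ and $\Cliff(E)=\Cliff_2(C)\leq\Cliff(C)$. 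This yields the key numerical constraint
\[
h^0(L)\leq \tfrac{1}{2}h^0(E)=\tfrac{1}{2}\bigl(h^0(L)+h^0(G)\bigr),
\]
that is, $h^0(L)\leq h^0(G)$.

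The second step is a direct computation. Setting $c:=\Cliff(C)\geq 0$, the Clifford equalities give $d_L=c+2(h^0(L)-1)$ and $d_G=c+2(h^0(G)-1)$, whence
\[
\lambda(L)=2+\frac{c}{h^0(L)-1},\qquad \lambda(G)=2+\frac{c}{h^0(G)-1},\qquad \lambda(E)=2+\frac{2c}{h^0(L)+h^0(G)-2}.
\]
The desired chain $\lambda(L)\geq \lambda(E)\geq \lambda(G)$ is then equivalent (when $c>0$) to the pair of elementary inequalities
\[
\frac{1}{h^0(L)-1}\;\geq\;\frac{2}{h^0(L)+h^0(G)-2}\;\geq\;\frac{1}{h^0(G)-1},
\]
both of which reduce to $h^0(G)\geq h^0(L)$, exactly the bound established in the first paragraph. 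When $c=0$ all three slopes collapse to $2$ and the inequalities hold trivially.

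No step is really delicate here: the work is done once Lemmas \ref{Newsteadlem} and \ref{Newsteadlem1} are in hand. The only mildly subtle point is confirming that the hypotheses of Lemma \ref{Newsteadlem1} apply to the full space of sections $V=H^0(E)$ and that the line subbundle $L$, being a genuine subbundle rather than only a subsheaf, has $h^0(L)$ entirely contained in $V$, so that the conclusion $h^0(L)\leq h^0(E)/2$ can be read directly. Once that is verified, the rest is the mediant manipulation outlined above and the proof closes immediately.
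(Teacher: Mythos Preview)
Your proof is correct and follows essentially the same approach as the paper: both invoke Lemma \ref{Newsteadlem} for the Clifford equalities and section additivity, and Lemma \ref{Newsteadlem1} for the bound $2h^0(L)\leq h^0(E)$, then express each $\lambda$ as $2+\tfrac{c}{\,\cdot\,}$ and reduce the comparison to $h^0(L)\leq h^0(G)$. The only cosmetic difference is that the paper handles $\lambda(L)\geq\lambda(E)$ by citing Theorems \ref{indicecomputingthm1} and \ref{indicecomputingthm0}, whereas you derive both inequalities by the same direct computation---a slightly more self-contained presentation, but not a different idea.
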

\begin{proof}
	The inequality $\lambda(L)\geq \lambda(E)$ is a direct consequence of Theorem \ref{indicecomputingthm1} and Theorem \ref{indicecomputingthm0}.
	
	Since, by Lemma \ref{Newsteadlem}, we have $\Cliff(G)=\Cliff(E)=\Cliff(C)$, it follows that
	$$\lambda(G)=\frac{\Cliff(C)}{h^0(G)-1}+2, \quad \lambda(E)=\frac{2\Cliff(C)}{h^0(E)-2}+2.$$
	Therefore, $\lambda(G)\leq \lambda(E)$ is equivalent to 
	\begin{align}\label{thirdmainlemma}
		\frac{\Cliff(C)}{h^0(G)-1}\leq \frac{2\Cliff(C)}{h^0(E)-2}\comma
	\end{align}
	which is immediate whenever $\Cliff(C)=0$. Otherwise,  by Lemma (\ref{Newsteadlem1}), we obtain
    $$2h^0(L) \leq  h^0(E),$$ and  \eqref{thirdmainlemma} 
    follows from Lemma \eqref{Newsteadlem}.
\end{proof}
\begin{proposition}\label{diagramlem}
Let $(E, V)$ be a globally generated coherent system 
and
\begin{align}\label{propexactseq1}
    0\rightarrow (F_1, V_1)\rightarrow (E,V) \rightarrow (F_2, V_2)\rightarrow 0  \end{align}
 an exact sequence of globally generated coherent systems induced from an exact sequence 
\begin{align}\label{bundexseq}
0\rightarrow F_1\stackrel{i}{\longrightarrow} E \stackrel{\pi}{\longrightarrow} F_2 \rightarrow 0.
\end{align}
 Suppose a subbundle $S\subset M_{E, V}$ fits into a short exact sequence 
\begin{align}\label{seq1}
0\rightarrow 
	S_1\rightarrow S \rightarrow S_2\rightarrow 0,
	\end{align}
where $S_i\subset M_{F_i, V_i}$ are subbundles and $I= \im \alpha_S$,  $I_i= \im \alpha_{S_i}$, $i=1,2$ where $\alpha_{S_i}$ are the corresponding morphisms in the Butler diagram \eqref{Butlerdiagram}. Then,

\noindent (i) If $S_2=0$ then 
$F_S=F_{S_1}$ and $ I\cong I_1$.
    
\noindent (ii) If $S_1=0$ then 
$F_S\cong F_{S_2}$ and $ I\cong I_2$.

\noindent (iii) If $S_i\neq 0$ and either the morphism $H^0(S^*)\rightarrow H^0(S_1^*)$ is surjective or $\dim V_1=2$, then we obtain an exact sequence
	$$0\rightarrow F_{S_1}\rightarrow F_S \rightarrow F_{S_2}\rightarrow 0,$$
together with the commutative diagram:
{\small
	\begin{align}\label{relativeButlerdiagram}
		\xymatrix{
			0\ar[r]&F_{S_1} \ar[d]^{\alpha_{S_1}} \ar[r],&F_S\ar[d]^{\alpha_S} \ar[r] &F_{S_2} \ar[d]^{\alpha_{S_2}}\ar[r]&0\\
			0\ar[r] &F_1\ar[r]^{i}& E\ar[r]^{\pi}& F_2\ar[r]&0.
		}
	\end{align}
}
\end{proposition}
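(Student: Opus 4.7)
The strategy is to compare the three Butler diagrams associated with $(E,V,S)$, $(F_1,V_1,S_1)$, and $(F_2,V_2,S_2)$, using the snake lemma together with the functoriality of the Butler construction. First, I would apply the snake lemma to the commutative diagram whose rows are the evaluation short exact sequences of the three coherent systems and whose columns are the $V_\bullet$-exact sequence and the bundle sequence \eqref{bundexseq}; this yields the short exact sequence of Lazarsfeld--Mukai bundles
$$0 \to M_{F_1, V_1} \to M_{E, V} \to M_{F_2, V_2} \to 0,$$
with respect to which $S_1 = S \cap M_{F_1, V_1}$ and $S_2$ is the image of $S$ in $M_{F_2, V_2}$, matching the given sequence \eqref{seq1}.

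For case (i), since $S = S_1$ lies inside $V_1 \otimes \mathcal{O}_C \subset V \otimes \mathcal{O}_C$, the map $V^* \to H^0(S^*)$ factors through the projection $V^* \twoheadrightarrow V_1^*$. Its image $W^*$ therefore equals $W_1^* = \im(V_1^* \to H^0(S_1^*))$, giving $W = W_1$ and $F_S = F_{S_1}$ directly from the defining cokernel property of the Butler diagram. The composition $W \otimes \mathcal{O}_C \subset V_1 \otimes \mathcal{O}_C \to F_1 \hookrightarrow E$ identifies $\alpha_S$ with $i \circ \alpha_{S_1}$, yielding $I \cong I_1$. Case (ii) would follow by a dual argument, interchanging the roles of $V_1, V_2$ and of $F_1, F_2$ and observing that $S \cong S_2$ via the snake SES above.

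For case (iii), the objective is to construct a short exact sequence $0 \to W_1 \to W \to W_2 \to 0$; combined with \eqref{seq1}, a snake-lemma argument applied to the three Butler sequences $0 \to S_i \to W_i \otimes \mathcal{O}_C \to F_{S_i} \to 0$ (viewed as columns of a $3\times 3$ commutative diagram) then delivers $0 \to F_{S_1} \to F_S \to F_{S_2} \to 0$. To produce the SES of $W$'s, I would apply $\Hom(-,\mathcal{O}_C)$ to \eqref{seq1} and take $H^0$; the surjectivity hypothesis $H^0(S^*) \twoheadrightarrow H^0(S_1^*)$ yields the short exact sequence $0 \to H^0(S_2^*) \to H^0(S^*) \to H^0(S_1^*) \to 0$. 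Combining this with the dual sequence $0 \to V_2^* \to V^* \to V_1^* \to 0$ and tracking the restriction maps, the SES of images $W_1 \hookrightarrow W \twoheadrightarrow W_2$ emerges. The alternative hypothesis $\dim V_1 = 2$ covers a low-dimensional situation in which this comparison can be carried out by a direct computation without invoking the surjectivity. The commutativity of \eqref{relativeButlerdiagram} is then immediate from the naturality of each $\alpha_{S_i}$ with respect to $i$ and $\pi$.

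The main obstacle is the construction of the SES $0 \to W_1 \to W \to W_2 \to 0$ in case (iii): a priori the subspace $W \subset V$ could strictly exceed an extension of $W_2$ by $W_1$, so one must rule out spurious $V_1$-directions in $W$ that are not already forced by $S_1$. The surjectivity $H^0(S^*) \twoheadrightarrow H^0(S_1^*)$ is precisely the condition that makes the dual picture split cleanly and prevents such extras from appearing; the dimension condition $\dim V_1 = 2$ plays the analogous role in the degenerate case.
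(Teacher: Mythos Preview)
Your outline for case (i) and for the surjectivity hypothesis in case (iii) matches the paper's argument: both proceed via the commutative diagram relating $V_2^*\hookrightarrow V^*\twoheadrightarrow V_1^*$ to $H^0(S_2^*)\hookrightarrow H^0(S^*)\to H^0(S_1^*)$, extract the sequence $0\to W_2^*\to W^*\to W_1^*$ of images, and then run the snake lemma on the three Butler sequences.

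There is, however, a genuine gap in your treatment of case (ii). The claim that it follows ``by a dual argument, interchanging the roles of $V_1,V_2$ and of $F_1,F_2$'' is not valid: the exact sequence $0\to(F_1,V_1)\to(E,V)\to(F_2,V_2)\to 0$ is not symmetric in the sub and the quotient. In case (i) the inclusion $S=S_1\subset M_{F_1,V_1}\subset V_1\otimes\mathcal O_C$ forces $W\subset V_1$, whence $W=W_1$ immediately. In case (ii), although $S\cong S_2$ as abstract bundles, $S$ sits inside $V\otimes\mathcal O_C$ while $S_2$ sits inside the \emph{quotient} $V_2\otimes\mathcal O_C$; a priori the minimal subspace $W\subset V$ with $S\subset W\otimes\mathcal O_C$ may meet $V_1$ nontrivially, so $W\to W_2$ need not be an isomorphism without further argument. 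The paper handles this by choosing a splitting $V\cong V_1\oplus\overline V_2$ and showing, via the commutativity of the square relating $\phi_{E,V}^*$ and $\phi_{F_2,V_2}^*$, that $\gamma_{E,V}\circ\phi_{E,V}^*$ factors through $\pi^*$; this is what yields $W\cong W_2$ and, separately, $\ker\alpha_S\cong\ker\alpha_{S_2}$ (hence $I\cong I_2$). None of this is a formal duality with case (i), and the $I\cong I_2$ step in particular requires its own diagram chase.

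Your handling of the alternative hypothesis $\dim V_1=2$ in case (iii) is also too vague. The paper's point is specific: when $\dim V_1=2$ one has $\rk M_{F_1,V_1}=1$, so $S_1=M_{F_1,V_1}$ and $F_{S_1}$ is a line bundle. One always has the map $h_1\colon F_{S_1}\to F_S$ from the third row of the big diagram; the issue is its injectivity. Since $i\circ\alpha_{S_1}=\alpha_S\circ h_1$ and $\alpha_{S_1}\neq 0$, the map $h_1$ is nonzero, and any nonzero morphism from a line bundle to a torsion-free sheaf is injective. This is the mechanism replacing the surjectivity of $H^0(S^*)\to H^0(S_1^*)$, not a generic ``direct computation''.
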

\begin{proof}
  Let  
\begin{align}\label{vectexseq}
0\rightarrow V_1\stackrel{\overline{i}}{\longrightarrow} V \stackrel{\over{\pi}}{\longrightarrow} V_2 \rightarrow 0
\end{align}
be the exact sequence of vector spaces, induced by the exact sequence (\ref{propexactseq1}).
The exact sequence \eqref{seq1} then gives rise to the following commutative diagram 
{\small
	\begin{align}\label{relativeButlerdiagram0}
	\xymatrix{
		0\ar[r]&V_2^* \ar[d]^{\theta_2} \ar[r],&V^*\ar[d]^\theta \ar[r] &V_1^* \ar[d]^{\theta_1}\ar[r]&0\\
		0\ar[r]&H^0(S_2^*)\ar[r]^{\gamma_2}&H^0(S^*)\ar[r]^{\gamma_1}&H^0(S_1^*).&
	}
\end{align}
}
From this we obtain an exact sequence $0\rightarrow W_2^* \rightarrow W^* \rightarrow 
W_1^*$, where $W^*:=\im \theta$, $W_i^*:=\im \theta_i$ for $i=1, 2$. Consequently, we obtain the following commutative diagram: 
{\small
	\begin{align}\label{relativeButlerdiagram21}
	\xymatrix{& 0   \ar[d]& 0\ar[d] &0\ar[d]\\
			0\ar[r]&S_1\ar[r]\ar[d]&S\ar[r]\ar[d]&S_2\ar[d]\ar[r] &0\\
		& W_1\otimes \mathcal{O}_C\ar[r]\ar[d]^{\eta_1^*}& W\otimes \mathcal{O}_C\ar[r]\ar[d]^{\eta^*}& W_2\otimes \mathcal{O}_C\ar[d]^{\eta_2^*}\ar[r] &0\\
	& F_{S_1}\ar[r]^{h_1}\ar[d]& F_{S}\ar[r]^{h_2}\ar[d]& F_{S_2}\ar[d]\ar[r] &0 \\
		& 0   &0& 0. 
	}
\end{align}
}\\

\noindent \textit{Proof of (i):}
 If $S_2=0$, then $\gamma_1$ in  Diagram \eqref{relativeButlerdiagram0} is injective.
So its restriction to $W^*$ gives an isomorphism from $W^*$ to $W_1^*$.  As $S=S_1$, we obtain $F_S=F_{S_1}$. The isomorphism $I\cong I_1$ is immediate by the injectivity of $i$ in the following commutative square 
{\small
 	\begin{align}\label{relativeButlerdiagram211i}
 	\xymatrix{
 		& F_{S_1}\ar[r]^{=}\ar[d]^{\alpha_{S_1}}& F_{S}\ar[d]^{\alpha_S}\\
 		& F_1\ar[r]^{i}& E.
 	}
 \end{align}
}


\noindent \textit{Proof of (ii):} If $S_1=0$, then $\theta_1$ in Diagram \eqref{relativeButlerdiagram0} vanishes. In particular, $W_1^*=0$. Thus for any $v\in V^*$, we have 
\begin{align}\label{restriction}
    \theta(v)=\theta(v_1+v_2)=\theta_2(v_2),
\end{align}
where $v_1\in V_1^*$ and $v_2\in V_2^*$.
 Hence, $W\cong W_2$. From the third row of Diagram \eqref{relativeButlerdiagram21}, it follows that $F_S\cong F_{S_2}$. 

Now, in order to prove $I\cong I_2$, 
it suffices to prove that $\ker\alpha_S \cong \ker\alpha_{S_2}.$
Recall that $\alpha_S$ is defined as 
$$\alpha_S:= (\gamma_{E,V}\circ \phi_{E,V}^*)^*,$$ where $\phi_{E,V}^*$ is the dual of the evaluation morphism  \eqref{evaluemorphism}, and $\gamma_{E,V}$ is the dual of $W\otimes \mathcal{O}_C \rightarrow V \otimes \mathcal{O}_C$ making the diagram
{\small
	\begin{align}\label{relativeButlerdiagram211}
	\xymatrix{
	0\ar[r]	& F_S^*\ar[r]& W^*\otimes \mathcal{O}_C\ar[r]&S^*\ar[r] &0\\
	0\ar[r]& E^*\ar[u]\ar[r]^{\phi_{E,V}^*}& V^*\otimes\mathcal{O}_C\ar[r]\ar[u]^{\gamma_{E,V}}& M_{E, V}^*\ar[u]\ar[r] &0.
}
\end{align}
}
commutative. Here we identify $F_S^*$ with its image in  $W^*\otimes \mathcal{O}_C$.

 Similarly $\alpha_{S_2}=(\gamma_{E,V}\circ \bar{\pi}\circ \phi_{F_2,V2}^*)^*$, where $\bar{\pi}$ is the induced morphism in 
 \eqref{vectexseq}.
 Therefore, the proof will be complete whenever we show 
 $$\im (\gamma_{E,V}\circ \phi_{E,V}^*) \cong \im (\gamma_{E,V}\circ \bar{\pi}\circ \phi_{F_2,V2}^*).$$
  Likewise, by the commutative square 
{\small
 	\begin{align}\label{square2}
 	\xymatrix{
 		& E^*\ar[r]^{\phi_{E, V}^*}& V^*\otimes \mathcal{O}_C\\
 		& F_2^*\ar[r]_{\phi_{F_2, V_2}^*}\ar[u]^{\pi^*}& V^*_2\otimes\mathcal{O}_C\ar[u]^{\overline{\pi}},
 	}
 \end{align}
}
it suffices to prove 
$$\im(\gamma_{E,V}\circ \phi_{E,V}^*)\cong\im(\gamma_{E,V}\circ \phi_{E,V}^*\circ \pi^*).$$ As well, we have $V\otimes\mathcal{O}_C=(V_1\otimes\mathcal{O}_C)\oplus(\overline{V_2}\otimes\mathcal{O}_C)$ with $V_2\cong \overline{V_2}\leq V$. 
 
  Consider the following commutative diagram 
{\small
	\begin{align}\label{relativeButlerdiagram212}
	\xymatrix{&0\ar[d]  & 0\ar[d] &\\
			0\ar[r]&F_2^*\ar[r]^{\pi^*}\ar[d]^{\phi_{F_2, V_2}^*}&E^*\ar[r]^{i^*}\ar[d]^{\phi_{E, V}^*}&F_1^* \ar[r]&0\\
	0\ar[r]	& V_2^*\otimes \mathcal{O}_C\ar[r]\ar[d]^{\gamma_{F_2, V_2}}& (\overline{V}_2^*\otimes \mathcal{O}_C)\oplus (V_1^*\otimes \mathcal{O}_C)=\ar[d]^{\gamma_{E,V}} V^*\otimes \mathcal{O}_C& &\\
& W^*\otimes\mathcal{O}_C\ar[r]^=\ar[d]& W^*\otimes\mathcal{O}_C\ar[d]& &. \\
		&   0 & 0, & 
	}
\end{align}
}
and observe that 
\begin{align}\label{1}
(\phi_{E, V}^*)^{-1}(\overline{V}_2\otimes\mathcal{O}_C)=\pi^*(F_2^*).\end{align}
Recall by (\ref{restriction}) that we have 
\begin{align}\label{2}
    \gamma_{E, V}(V^*\otimes \mathcal{O}_C)=\gamma_{E, V}(\overline{V}_2^*\otimes \mathcal{O}_C).\end{align}
This together with the commutativity of the diagram (\ref{relativeButlerdiagram212}), gives 
$$\gamma_{E, V}\circ\phi_{E, V}^*(E^*)=\gamma_{F_2, V_2}\circ\phi_{F_2, V_2}^*(F_2^*),$$
as required.\\
 
\noindent \textit{Proof of (iii):} 
If $\gamma_1$  in Diagram \eqref{relativeButlerdiagram0} is surjective,  the second and third rows  in Diagram \eqref{relativeButlerdiagram21}  are exact on the left. Hence, we obtain Diagram \eqref{relativeButlerdiagram}.

When $\dim V_1=2$, the bundle $F_{S_1}$ is a line bundle. In the diagram:
{\small
 	\begin{align}\label{relativeButlerdiagram211}
 	\xymatrix{
 		& F_{S_1}\ar[r]^{h_1}\ar[d]^{\alpha_{S_1}}& F_{S}\ar[d]^{\alpha_S}\\
 	0 \ar[r]	& F_1\ar[r]^{i}& E,
 	}
 \end{align}
}
the composition morphism $i\circ \alpha_{S_1}$ is nonzero. It follows that the morphism   $h_1$ must also be nonzero. 

 Since $F_{S_1}$ is a line bundle, any nonzero morphism from it to a torsion-free sheaf is injective. Thus, $h_1$ is injective, and the desired commutative diagram is obtained.
\end{proof}

Now, we complete the proof of conjecture \ref{MSBconjecture}(ii) for bundles that compute $\Cliff_2(C)$ and admit a subpencil. 
\begin{theorem}\label{MSconjecturerank=2}
Let $E$ be a vector bundle of rank two and degree $d$, admitting a line subbundle $L\subset E$ with $h^0(L)=2$. Suppose further that $\Cliff(E)=\Cliff_2(C)$. Then 
$E$ is linearly (semi)stable if and only if the Lazarsfeld--Mukai bundle $M_E$
is (semi)stable.
Moreover, if $C$ is non-hyperelliptic, then $M_E$ is stable if and only if $h^0(E)>4$.
If $C$ is hyperelliptic, then $M_E$  is strictly semistable if and only if $E$
 is an extension of the form
$$0\rightarrow g^1_2 \rightarrow E\rightarrow tg^1_2\rightarrow 0. $$ 
\end{theorem}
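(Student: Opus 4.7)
The plan is to establish the biconditional by proving each direction separately. The forward direction $M_E$ (semi)stable $\Rightarrow (E, H^0(E))$ linearly (semi)stable is classical: for any generated subsystem $(F, W) \leq (E, V)$, the evaluation sequence $0 \to M_{F, W} \to W \otimes \mathcal{O}_C \to F \to 0$ presents $M_{F, W}$ as a subbundle of $M_E$ (via the inclusion $W \subseteq V$), and the identities $\mu(M_{F, W}) = -\lambda(F, W)$ and $\mu(M_E) = -\lambda(E)$ translate (semi)stability of $M_E$ directly into linear (semi)stability of $(E,V)$.

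For the converse, Corollary \ref{rank1cor} essentially settles the stable case: if $E$ is linearly stable, then every maximal-slope subbundle $S \subset M_E$ satisfies $\mu(S) < \mu(M_E)$, via a case split on $\rk F_S$ in the Butler diagram combined with Theorem \ref{specialcase}, Lemma \ref{finalthrm}, Remark \ref{remark3}, and the degree bound $\deg E \leq \delta_1 (h^0(E) - 2)$. To extend to the strictly semistable setting, I would parallel that argument with weak inequalities ($\leq$ in place of $<$), relying only on linear semistability of $E$. The subpencil hypothesis enters crucially through the short exact sequence $0 \to L \to E \to G \to 0$ with $G := E/L$; by Lemma \ref{Newsteadlem} it is exact on global sections and satisfies $\Cliff(L) = \Cliff(G) = \Cliff(C)$, so the Lazarsfeld--Mukai functor produces $0 \to M_L \to M_E \to M_G \to 0$. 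By \cite[Theorem 5.1]{MS} both $M_L$ (trivially, as a line bundle) and $M_G$ are stable, so for any subbundle $S \subset M_E$ the intersections $S_1 := S \cap M_L \in \{0, M_L\}$ and $S_2 := S/S_1 \hookrightarrow M_G$ are well-controlled, and Proposition \ref{diagramlem}(iii)---applicable since $\dim H^0(L) = 2$---reconstructs the Butler diagram of $(E, S)$ from those of $(L, S_1)$ and $(G, S_2)$, which together with linear semistability of $E$ will pin down $\mu(S) \leq \mu(M_E)$.

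For the moreover statements the main equivalence reduces matters to characterizing linear stability of $E$. When $C$ is non-hyperelliptic and $h^0(E) = 4$, the subpencil itself satisfies $h^0(L) = 2 = h^0(E)/2$, so Theorem \ref{indicecomputingthm0} forces $E$ strictly linearly semistable and hence $M_E$ strictly semistable. When $h^0(E) \geq 5$, a direct computation solving the equality cases in Lemma \ref{Newsteadlem3} using $\Cliff(L) = \Cliff(G) = \Cliff(C)$ shows that $\lambda(L) = \lambda(E)$ forces $d_L = 2$ and thus $\Cliff(C) = 0$, contradicting the non-hyperelliptic assumption; the parallel computation for $\lambda(E) = \lambda(G)$ forces $2h^0(L) = h^0(E)$, i.e.\ $h^0(E) = 4$. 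These computations rule out conditions (i) and (ii) of Theorem \ref{indicecomputingthm1}, yielding that $E$ is linearly stable and hence $M_E$ is stable. For hyperelliptic $C$, invoking \cite[Proposition 2]{Re}, any rank-two semistable bundle with Clifford index zero must be an extension $0 \to g^1_2 \to E \to t g^1_2 \to 0$, and for such an extension the inclusion $M_{g^1_2} \subset M_E$ provides an equal-slope subbundle, making $M_E$ strictly semistable; the converse is read off from the main equivalence.

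The main obstacle I anticipate lies in the extension of Corollary \ref{rank1cor} to the semistable setting, particularly handling subbundles $S \subset M_E$ with $\rk F_S \geq 2$ and $\rk I_S = 2$, a configuration not directly covered by Theorem \ref{specialcase} (whose hypothesis is $\rk I_S = 1$), and the careful bookkeeping of ranks, degrees, and global sections in Proposition \ref{diagramlem}(iii) when $S_1 = M_L$ and $S_2 \neq 0$ simultaneously, where the three coupled Butler diagrams must be aligned with the borderline equality cases $\lambda(L) = \lambda(E) = \lambda(G)$ that distinguish stable from strictly semistable $M_E$.
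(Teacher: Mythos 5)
Your outline matches the paper's proof in all essential respects: the forward direction via $M_{F,W}\subset M_{E}$, and the converse via the subpencil sequence $0\to L\to E\to G\to 0$, the induced filtration $S_1=S\cap M_L$, $S_2=S/S_1\hookrightarrow M_G$, Proposition \ref{diagramlem}, the stability of $M_G$ from \cite[Theorem 5.1]{MS}, and the slope comparison $\lambda(L)\geq\lambda(E)\geq\lambda(G)$ of Lemma \ref{Newsteadlem3}, which is exactly the inequality $d_1\leq\mu(M_G)\leq\mu(M_G/S_2)$ that closes the case $S_1=M_L$ in the paper. Two points deserve attention. First, Corollary \ref{rank1cor} does not by itself ``settle the stable case'': its proof runs through Theorem \ref{specialcase}, whose hypothesis is $\rk I_S=1$, so it leaves open precisely the configuration you flag. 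The paper's resolution is that the subpencil decomposition makes this a non-issue: when $S\cap M_L=0$, Proposition \ref{diagramlem}(ii) identifies $I_S$ with $I_{S_2}\subseteq G$, which is automatically of rank one, so Theorem \ref{specialcase} and Lemma \ref{finalthrm} apply; when $S\cap M_L=M_L$ the slope bookkeeping with $M_G$ takes over regardless of $\rk I_S$. The corollary is thus only ever invoked inside the case $S_1=0$, not globally.

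Second, your route to the ``moreover'' statement has a step that would fail as written: the equality-case computation in Lemma \ref{Newsteadlem3} concerns only the given subpencil $L$ and its quotient $G$, so it cannot ``rule out conditions (i) and (ii) of Theorem \ref{indicecomputingthm1}'', which quantify over \emph{all} line subbundles with $h^0=h^0(E)/2$ (e.g.\ a subbundle with three sections when $h^0(E)=6$) and over all rank-two subsheaves $T$ with $\Cliff(T)=\Cliff_2(C)$. The paper does not pass through linear stability of $E$ at all here: it extracts the ``moreover'' directly from the equality analysis in the two cases --- equality in the case $S_1=0$ forces $h^0(E)=4$ or $C$ hyperelliptic, and equality in the case $S_1=M_L$ forces $G$ to be strictly linearly semistable, which by \cite[Proposition 3.3]{MS} would require $G\cong K_C(D)$ with $\deg D=2$ (impossible since $\deg G\leq 2g-2$) or $C$ hyperelliptic, in which case \cite[Proposition 2]{Re} and the degree condition produce the stated extension by $g^1_2$'s. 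If you run your decomposition argument to completion with this equality analysis, the ``moreover'' falls out without having to classify all linear destabilizers of $E$.
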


\begin{proof}
If $M_E$ is (semi)stable, then $E$ is trivially linearly (semi)stable.	
Conversely, we suppose that $E$ is linearly (semi)stable and prove that $M_E$ is (semi)stable. Notice that if $H\subset E$ is 
a globally generated invertible subsheaf of $E$ such that $\lambda(H)=\lambda(E)$, then we have $\mu(M_H)=\mu(E)$ and $M_E$ would be strictly semistable. We now assume that $E$ is linearly stable and aim to show that $M_E$ is stable.

Let $G:= E/L$ and $S$ be a subbundle of $M_E$ of maximal slope.
	If $\rk F_S=1$, then $F_S$ is a line subbundle of $E$, and 
    the linear stability of $E$ implies $\mu(S)<\mu(M_E)$. 
	
	Assume $\rk F_S\geq 2$ and consider the exact sequence 
	$$0\rightarrow S_1 \rightarrow S \rightarrow S_2\rightarrow 0, $$
 where $S_1\subseteq M_L$, $S_2\subseteq M_G$.  Since $S_2$ is a subsheaf of the trivial bundle $H^0(G)\otimes \mathcal{O}_C$, it can not be a torsion sheaf.
Furthermore, note that $S_2\neq 0$. otherwise, given that $h^0(L)=2$, we would have $\rk(F_S)=1$. Therefore, we distinguish between two cases:\\

	\textsf{Case (i):} 
	If $S_1=0$, then 
 by Proposition \ref{diagramlem}, we have 
 $\rk \alpha_S=\rk \alpha_{S_2}=1$. 
 Corollary \ref{rank1cor} implies that 
 $\mu(S)\leq \mu(M_E)$.
 
	 If $\mu(S)=\mu(M_E)$, then by Lemma \ref{finalthrm},  $\deg(E)=\delta_1\cdot (h^0(E)-2)$.  By Remark \ref{remark3}, this implies that either $C$ is hyper elliptic and $d_E=2(h^0(E)-2)$, or $h^0(E)=4$.
	  In the first case, we have
	 $E=g^1_2\oplus g^1_2$ by \cite[Proposition 2]{Re}. Therefore, $E$ is linearly semistable. \\
If $h^0(E)=4$, then $h^0(G)=2$, and hence $\lambda(L)=\lambda(E)$ as desired.
	 \\	
       
	\textsf{Case (ii):} 
Let $\rk S_1=1$ and $\rk S_2\geq 1$. We may assume, without loss of generality, that $S_1=M_L$. Since $h^0(L)=2$, Proposition \ref{diagramlem} yields the exact sequence
$$0\rightarrow F_{S_1}\rightarrow F_S \rightarrow F_{S_2}\rightarrow 0,$$
where $F_{S_1}=L$.

Let $d_1:=\deg(M_L)$, $f_2:=\deg(S_2)$ and $r_2:=\rk (S_2)$. 
 Then the slope inequality
 $$\mu(S)=\frac{d_1+f_2}{1+r_2}\leq \mu(M_E)=\frac{d_1-d_G}{1+\rk(M_G)},$$
is equivalent to
$$d_1\cdot (\rk(M_G)-r_2)+f_2\cdot \rk(M_G)\leq  -d_G-f_2-r_2\cdot d_G.$$

According to \cite[Proposition 3.3]{MS} and \cite[Theorem 5.1]{MS}, the bundle $M_G$ is semistable, so $\mu(S_2)\leq \mu(M_G)$, that is,
 $$
 f_2\cdot \rk(M_G)\leq -r_2\cdot d_G.
 $$
Thus, it suffices to show
	$$d_1\cdot (\rk_{M_G}-r_2)\leq -d_G-f_2,$$
	which is equivalent to 
\begin{align}\label{51}
d_1\leq \mu\left(M_G/S_2\right).
\end{align}
On the other hand, by Lemma \ref{Newsteadlem3} we have
$$
- d_1 = \lambda(L)\geq \lambda(G) = -\mu(M_G), 
$$
from which it follows that $d_1\leq \mu(M_G)$.
 Finally, by semistability of $M_G$  we obtain $d_1\leq \mu(M_G)\leq \mu\left(M_G/S_2\right)$, as desired.

Furtheremore, the equality $\mu(S)=\mu(M_E)$
can occur only if $G$ is strictly linearly semistable. According to \cite[Proposition 3.3]{MS},
this happens if and only if either $G\cong K_C(D)$ for some effective divisor $D$ of degree two, or $C$ is hyperelliptic and  $\deg(G)=2(h^0(G)-1)$. However, since $d_G\leq 2g-2$, the line bundle $G$ can not be of the form $K_C(D)$ for such a divisor. Therefore, the equality $\mu(S)=\mu(M_E)$ is impossible
 when $C$ is non hyperelliptic.

If $C$ is hyperelliptic and $\deg(G)=2(h^0(G)-1)$, then $E$ is an extension of the form $$0\rightarrow g^1_2\rightarrow E \rightarrow tg^1_2\rightarrow 0,$$
where $t:=h^0(G)-1$. 
 In this case, it follows that $E$ is strictly linearly semistable.
\end{proof}

\begin{corollary}
Let $E$ be a rank two bundle computing  
$\Cliff_2(C)$ and admitting a line subbundle of $L$ with $h^0(L)=2$. Then $M_E$ is semistable.

\end{corollary}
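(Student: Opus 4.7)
The plan is to deduce this corollary almost directly from the two main results that precede it, namely the linear semistability statement for bundles computing $\Cliff_2(C)$ (Theorems \ref{indicecomputingthm1} and \ref{indicecomputingthm0}) and the equivalence between linear semistability of $E$ and semistability of $M_E$ established in Theorem \ref{MSconjecturerank=2}. So the strategy is to glue these two results together.

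First, I would observe that since $E$ computes $\Cliff_2(C)$, by definition $E$ is semistable of rank two with $h^0(E)\geq 4$ and $\mu(E)\leq g-1$. Depending on whether $h^0(E)=4$ or $h^0(E)\geq 5$, one applies Theorem \ref{indicecomputingthm0} or Theorem \ref{indicecomputingthm1} respectively (with the non-hyperelliptic case covered directly, and the hyperelliptic case handled by invoking the Remark following Theorem \ref{indicecomputingthm0} together with the classification of rank-two bundles of Clifford index zero on hyperelliptic curves via \cite[Proposition 2]{Re}). In every case, one concludes that $E$ is linearly semistable.

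Second, I would invoke Theorem \ref{MSconjecturerank=2}, whose hypotheses are exactly: $E$ is a rank two bundle admitting a line subbundle $L\subset E$ with $h^0(L)=2$, and $\Cliff(E)=\Cliff_2(C)$. The ``semi'' direction of that equivalence says that linear semistability of $E$ is equivalent to the semistability of $M_E$. Combining this with the previous step yields the semistability of $M_E$, as required.

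There is essentially no obstacle here; the content is entirely packaged in the two earlier results, and the only care needed is to make sure the linear semistability step is applied in the correct regime ($h^0(E)=4$ versus $h^0(E)\geq 5$, and hyperelliptic versus non-hyperelliptic). In other words, the corollary is a clean packaging statement: the hard work was done in proving linear semistability of bundles computing $\Cliff_2(C)$ and in proving the MS-type equivalence for rank two bundles admitting a subpencil.
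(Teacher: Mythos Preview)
Your proposal is correct and matches the paper's own proof, which simply reads: ``The assertion follows from Theorems \ref{indicecomputingthm1}, \ref{indicecomputingthm0} and \ref{MSconjecturerank=2}.'' You have in fact been slightly more careful than the paper in explicitly flagging the hyperelliptic case (handled via the Remark following Theorem \ref{indicecomputingthm0} and \cite[Proposition 2]{Re}), which the paper's one-line proof leaves implicit.
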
 \label{stability_pencils}
\begin{proof}
The assertion follows from Theorems \ref{indicecomputingthm1}, \ref{indicecomputingthm0}  and \ref{MSconjecturerank=2}.

\end{proof}

\begin{remark}
    In \cite[Theorem 5.10]{Ortetal} it was shown that in the moduli space $S_0(2,d, 4)$ 
   of $\alpha$ stable rank-2 coherent systems for small $\alpha$,  the locus parametrizing globally generated pairs $(E, V)$
    with the property that the kernel $M_{E,V}$ of the evaluation map  
    $V \otimes \mathcal{O}_C \rightarrow E $ is semistable lies
    (in some range of the degree) precisely away from the locus of pairs $(E, V)$ admitting subpencils, see Definition \ref{pencil}. 
    Although the $\alpha$-stability of 
    $(E,V)$ for small $\alpha$ implies semistability of $E$, this result does not contradict Corollary \ref{stability_pencils} because there is no necessarily a lift from the vector bundle $E$ computing $\Cliff_2(C)$ to the moduli space $S_0(2,d, 4)$.
\end{remark}

\subsubsection{$\rk E=2$, $h^0(E)\leq 6$}\ 
In this subsection, we prove Conjecture \ref{MSBconjecture}(ii) for rank two vector bundles admitting a small number of global sections.

\begin{proposition}\label{exactglobalsectionsLN}
	Let $E$ be a rank $2$ bundle computing $\Cliff_2(C)$ and $S$ is a maximal stable destabilizing subbundle of $M_E$. If $\rk F_S:=r+2\geq 3$, then every exact sequence
	\begin{align}\label{mainexactseq1}
		0\rightarrow \mathcal{O}_C^r\rightarrow F_S\rightarrow F\rightarrow 0,
	\end{align}
	induces an exact sequence on global sections.
\end{proposition}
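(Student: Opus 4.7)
My plan is to argue by contradiction, mirroring the strategy of Theorem \ref{specialcase} with the rank-two quotient $F$ playing the role that $\det F_S$ played there. Suppose the sequence is not exact on global sections; then from the long cohomology sequence $h^0(F)\geq h^0(F_S)-r+1$. Because $H^0(M_E)=0$, the inclusion $W\hookrightarrow H^0(F_S)$ coming from the Butler sequence $0\to S\to W\otimes \mathcal{O}_C\to F_S\to 0$ gives $h^0(F_S)\geq \dim W=\rk F_S+\rk S=r+2+\rk S$, whence $h^0(F)\geq \rk S+3$.

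Next I would prove the Clifford-type inequality $\Cliff(F)\geq \Cliff_2(C)$. Here $F$ is a globally generated rank-two bundle with $\deg F=\deg F_S<\deg E$ by \eqref{F_sdegreeinequality2}, and hence $\mu(F)\leq g-1$; moreover $F$ has no trivial quotient line bundle, because such a quotient would compose with $F_S\twoheadrightarrow F$ to produce a nonzero element of $H^0(F_S^*)$, contradicting Property \ref{Butler_prop}(3). If $F$ is semistable, then $h^0(F)\geq 4$ makes $F$ eligible in the very definition of $\Cliff_2(C)$, so $\Cliff(F)\geq \Cliff_2(C)$; if $F$ is not semistable, Lemma \ref{LNlem} applies and $\Cliff(F)\geq \Cliff(C)\geq \Cliff_2(C)$.

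To close the argument I would combine two degree estimates. The inequality $\Cliff(F)\geq \Cliff_2(C)$ together with $h^0(F)\geq \rk S+3$ forces
$$\deg F_S\geq 2\rk S+2+2\Cliff_2(C).$$
On the other hand the destabilizing condition $\mu(S)\geq \mu(M_E)$, via $\deg S=-\deg F_S$ and $\rk M_E=h^0(E)-2$, yields
$$\deg F_S\leq \frac{\rk S\cdot \deg E}{h^0(E)-2}.$$
Substituting $\deg E=2(h^0(E)-2)+2\Cliff_2(C)$, coming from $\Cliff(E)=\Cliff_2(C)$, the two bounds collapse to
$$(\Cliff_2(C)+1)(h^0(E)-2)\leq \rk S\cdot \Cliff_2(C),$$
which forces $h^0(E)<\rk S+2$. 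This contradicts $h^0(E)\geq \dim W=\rk S+r+2$, since $r\geq 1$.

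The subtle step I foresee is the Clifford inequality for $F$: the quotient need not be semistable, so the argument splits into two cases, and the application of Lemma \ref{LNlem} in the non-semistable case rests crucially on both Property \ref{Butler_prop}(3) (to rule out trivial quotient line bundles) and \eqref{F_sdegreeinequality2} (to obtain $\mu(F)\leq g-1$). A minor side issue is that one should either assume $F$ is locally free, or first reduce to that situation by replacing $\mathcal{O}_C^r$ with its saturation inside $F_S$.
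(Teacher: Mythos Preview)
Your argument is correct and follows essentially the same route as the paper's proof: both hinge on the Clifford-type inequality $\Cliff(F)\geq\Cliff_2(C)$ for the rank-two quotient $F$ (obtained via the definition of $\Cliff_2$ when $F$ is semistable and via Lemma~\ref{LNlem} when it is not, after ruling out trivial quotients using $H^0(F_S^*)=0$), combined with the destabilising bound $\deg F_S\leq \rk S\cdot\frac{\deg E}{h^0(E)-2}$ and the identity $\deg E=2(h^0(E)-2)+2\Cliff_2(C)$.

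The only organisational difference is that the paper treats the case $h^0(F)=3$ separately and then, for $h^0(F)\geq 4$, passes through the intermediate inequality $h^0(F)<h^0(E)$ before reaching the contradiction $\deg F<2(h^0(F)-2)+2\Cliff_2(C)$; your version is slightly more economical, since the contradiction hypothesis already forces $h^0(F)\geq \rk S+3\geq 4$, and your final numerical comparison $(\Cliff_2(C)+1)(h^0(E)-2)\leq \rk S\cdot\Cliff_2(C)$ versus $h^0(E)\geq \dim W=\rk S+r+2$ is just a rearrangement of the same inequalities. Your closing remark about local freeness of $F$ is apt: the paper, like you, implicitly treats $F$ as a vector bundle when invoking Lemma~\ref{LNlem}, so the reduction to the saturated case is tacit in both arguments.
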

\begin{proof}
	Since $H^0(F_S^*)=0$, $F$ must be non-trivial. Moreover, since  $F$ is globally generated, we have $h^0(F)\geq 3$. Additionally, $F$ does not admit a trivial quotient. Otherwise, $F_S^*$ would contain the trivial bundle as a subsheaf, which contradicts $H^0(F_S^*)=0$.
	
 Assume $h^0(F)=3$ and observe that $r+2\leq h^0(F_S) \leq r+h^0(F)=r+3$. If $h^0(F_S)=r+3$, then we have the assertion. If $h^0(F_S)=r+2$, 
     then $h^0(F_S)=\rk (F_S)$ which is possible only if $F_S$ is the trivial bundle, however, as we already mentioned, this is impossible.
     
Suppose $h^0(F)\geq 4$ and notice that
by Property \ref{Butler_prop} (4) of Butler's diagram,  $\deg F = \deg F_S < \deg E$, hence
    $$
    \mu(F) = \frac{\deg F}{2} \leq \mu(E) \leq g-1.
    $$
Since $H^0(F_S^*)=0$, the bundle $F_S$ admits no trivial quotient; therefore, $F$ can not admit a trivial quotient either.  
Then, we conclude
	\begin{align}\label{cliffordindicecomparisions}
		\frac{\deg(F)}{2}-h^0(F)+2\geq \Cliff(C)\geq \frac{d_E}{2}-h^0(E)+2,
	\end{align}
either by definition of $\Cliff_2(C)$ when $F$ is semistable, or 
by Lemma $\ref{LNlem}$ when $F$ is non semistable. 
Since $\deg F< \deg E$, inequality 
\eqref{cliffordindicecomparisions} implies that
 $h^0(F)<h^0(E)$. As in the proof of Theorem 
 \ref{specialcase}, it follows that if 
 the map $H^0(F_S)\rightarrow H^0(F)$ is not surjective, then 
$$\deg(F) < 2(h^0(F)-2)+2\Cliff_2(C).$$
	This contradicts (\ref{cliffordindicecomparisions}). 
\end{proof}
\begin{lemma}\label{bounddimF_S}
	Suppose 
	$E$ computes $\Cliff_2(C)$ and $S$ is a semistable subbundle destabilizing $M_E$ with maximal slope and $\rk F_S=r+2\geq 3$.  Then 
\begin{align}\label{sectionsinequality}
h^0(F_S)\leq h^0(E)+r-\frac{rd_E}{2(h^0(E)-2)}\cdot
\end{align}
\end{lemma}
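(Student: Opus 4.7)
The plan is to apply Proposition \ref{exactglobalsectionsLN} to a short exact sequence that splits off a trivial subsheaf of maximal rank from $F_S$, thereby reducing the count of global sections to that of a rank-$2$ quotient $F$. I will then apply a Clifford-index estimate on $F$ together with the degree bound in Property \ref{Butler_prop}(4) to control $h^0(F)$ and $\deg F$.

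First, since $F_S$ is globally generated by the subspace $W\subseteq V\subseteq H^0(E)$ with $\dim W\geq \rk F_S=r+2$, I would select an $r$-dimensional subspace $U\subseteq W$ whose evaluation gives an injective sheaf morphism $U\otimes \mathcal{O}_C\hookrightarrow F_S$ with torsion-free cokernel $F$ of rank $2$. This yields the short exact sequence
\begin{equation*}
0\to \mathcal{O}_C^r\to F_S\to F\to 0,
\end{equation*}
and Proposition \ref{exactglobalsectionsLN} gives $h^0(F_S)=r+h^0(F)$.

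Next, I would argue that $\Cliff(F)\geq \Cliff_2(C)$. The bundle $F$ is globally generated (as a quotient of globally generated $F_S$); $F$ admits no trivial quotient, since any such would lift to a trivial quotient of $F_S$, contradicting $H^0(F_S^*)=0$ from Property \ref{Butler_prop}(3); and $\mu(F)=\deg F_S/2<g-1$ by Property \ref{Butler_prop}(4). If $F$ is slope-semistable with $h^0(F)\geq 4$, then $\Cliff(F)\geq\Cliff_2(C)$ by the very definition of $\Cliff_2(C)$; otherwise, Lemma \ref{LNlem} applies and gives $\Cliff(F)\geq\Cliff(C)\geq \Cliff_2(C)$. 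Rearranging yields $h^0(F)\leq h^0(E)+\frac{\deg F-d_E}{2}$. For the degree, Property \ref{Butler_prop}(4) combined with the semistability of $E$ (which yields $\deg I_S\leq \rk(I_S)\cdot d_E/2$) gives
\begin{equation*}
\deg F=\deg F_S\leq \frac{\rk(I_S)\,(\dim W-r-2)}{2(\dim W-\rk I_S)}\,d_E\leq d_E\left(1-\frac{r}{\dim W-2}\right),
\end{equation*}
where the second inequality is a routine check for $\rk I_S\in\{1,2\}$. Combining with $\dim W\leq h^0(E)$ produces the asserted bound
\begin{equation*}
h^0(F_S)=r+h^0(F)\leq h^0(E)+r-\frac{r\,d_E}{2(h^0(E)-2)}.
\end{equation*}

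The principal obstacle is the careful construction of the trivial subsheaf $\mathcal{O}_C^r\hookrightarrow F_S$ with torsion-free rank-$2$ cokernel; one needs to verify that a generic $U\subseteq W$ of dimension $r$ gives a subsheaf-injective evaluation whose quotient is torsion-free. The edge cases $h^0(F)\in\{2,3\}$ also demand separate attention, either via a direct contradiction (since $h^0(F)=2$ would force $F$ to be trivial, against the no-trivial-quotient property) or by a direct estimate using $h^0(\det F)\geq 2$, hence $\deg F\geq \delta_1$.
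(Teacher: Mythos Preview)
Your approach is essentially the same as the paper's: both pass to a rank-two quotient $0\to\mathcal{O}_C^r\to F_S\to F\to 0$, invoke Proposition~\ref{exactglobalsectionsLN} to obtain $h^0(F_S)=r+h^0(F)$, apply a Clifford-type bound $d_F-2h^0(F)\geq d_E-2h^0(E)$ to $F$ (via the definition of $\Cliff_2(C)$ in the semistable case or Lemma~\ref{LNlem} in the non-semistable case), and combine this with the degree estimate from Property~\ref{Butler_prop}(4) together with $\dim W\leq h^0(E)$. Your treatment of the degree bound is in fact slightly more explicit than the paper's, since you separate the cases $\rk I_S=1$ and $\rk I_S=2$ before arriving at the same inequality $\deg F_S\leq d_E\bigl(1-\tfrac{r}{\dim W-2}\bigr)$; the paper simply quotes this as inequality~(\ref{F_sdegreeinequality1}) specialized via $\deg I_S\leq d_E$ and $\rk I_S\leq 2$. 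The construction of the trivial subsheaf with torsion-free quotient, which you flag as the principal obstacle, is taken for granted in the paper (it is the standard choice of $r$ generic sections of the globally generated $F_S$). Your caveat about the edge case $h^0(F)=3$ with $F$ semistable is apt: the paper's phrasing ``$F$ satisfies the conditions of Lemma~\ref{LNlem}'' does not literally cover this, and one must fall back on the argument sketched inside Proposition~\ref{exactglobalsectionsLN} or your suggested route via $h^0(\det F)\geq 2$.
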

\begin{proof}
	Since $S$ is of maximal slope and destabilizes $M_E$, and $\rk F_S=r+2\geq 3$, we have 
\begin{align}\label{inequality20}
\deg(F_S)\leq \frac{\dim W-\rk F_S}{\dim W-2}\cdot d_E,
\end{align}
 by (\ref{F_sdegreeinequality1}).
 Given a rank two quotient $F$ of $F_S$ 
 $$
 0\rightarrow \mathcal{O}^{\oplus r }\rightarrow F_S \rightarrow F \rightarrow 0,
 $$
$F$ satisfies the conditions of Lemma \ref{LNlem}, as in the proof of Proposition \ref{exactglobalsectionsLN}. Therefore,
	$$
    d_F-2h^0(F)\geq d_E-2h^0(E).
    $$
	Since, by Proposition \ref{exactglobalsectionsLN}, the sequence $0\to \oplus^r\mathcal{O}_C\to F_S\to F\to 0$ induces an exact sequence on sections, we obtain $\deg(F_S)-2(h^0(F_S)-r)\geq d_E-2h^0(E)$. Using (\ref{inequality20}), this gives 
	$$ 2[h^0(E)-h^0(F_S)+r]\geq \frac{d_E}{\dim W-2}\comma$$
as required.	
\end{proof}
\begin{remark}\label{remark10}
Observe that, by the Snake Lemma, we have an inclusion $ N_S\hookrightarrow Q_S,$ with $N_S$ as in (\ref{kernelalpha}), $Q_S:=M_E/S$. Furthermore, under the hypothesis of Lemma \ref{bounddimF_S}, we have $H^0(Q_S)=0$, by 
Lemma \ref{Abel}. So, $h^0(F_S)\leq h^0(I_S)\leq h^0(E)$. Inequality \ref{sectionsinequality} shows that except possibly for the case
$d_E=2(h^0(E)-2)$, it holds $h^0(F_S) <  h^0(E)$.   

\end{remark}
\begin{theorem}\label{h0_less_than 6}
	Suppose that $E$ computes the rank-two Clifford index with $h^0(E)\leq 6$, and that $C$ is non-hyperelliptic. Then $E$ is linearly (semi)stable if and only if $M_E$ is (semi)stable. 
\end{theorem}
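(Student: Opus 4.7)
The forward direction is standard: if $M_E$ is (semi)stable, then any globally generated coherent subsystem $(F,W)\le (E,H^0(E))$ produces an inclusion $M_{F,W}\hookrightarrow M_E$, and the (semi)stability inequality for subbundles of $M_E$ rearranges into $\lambda(F,W)\ge \lambda(E)$. So the substantive direction is the converse.

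Assume $E$ is linearly (semi)stable. If $E$ admits a subpencil, Theorem \ref{MSconjecturerank=2} finishes the argument; so we may assume that $E$ contains no line subbundle with two independent sections. Let $S\subset M_E$ be a subbundle of maximal slope, and form the Butler diagram of $(E,H^0(E),S)$, producing $F_S$, a nonzero morphism $\alpha_S\colon F_S\to E$, and the image $I_S$. By construction $S=M_{F_S,W}$. If $\rk F_S=1$, then $\alpha_S$ is automatically injective (a nonzero map from a line bundle into a torsion-free sheaf), so $(F_S,W)$ is a globally generated invertible coherent subsystem of $(E,H^0(E))$; linear (semi)stability supplies $\lambda(F_S,W)\ge \lambda(E)$, which is a rewriting of $\mu(S)\le \mu(M_E)$.

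Suppose instead that $\rk F_S=r+2\ge 2$. Combining Lemma \ref{bounddimF_S} with the lower bound $h^0(F_S)\ge r+3$, obtained from Proposition \ref{exactglobalsectionsLN} together with the fact that a non-trivial rank-two globally generated bundle has at least three sections, yields
\begin{equation*}
r\cdot\frac{h^0(E)-2+\Cliff_2(C)}{h^0(E)-2}\ \le\ h^0(E)-3.
\end{equation*}
Under $h^0(E)\le 6$ only finitely many admissible pairs $(h^0(E),r)$ remain, and the analysis splits according to $\rk I_S$. If $\rk I_S=1$, Theorem \ref{specialcase} supplies $\deg F_S\ge\delta_1(h^0(\det F_S)-1)$, which, combined with the semistability of $E$ and the absence of subpencils, contradicts $\mu(S)\ge\mu(M_E)$ except in the borderline situation of Remark \ref{remark3}, itself excluded by linear stability (as in the argument of Corollary \ref{rank1cor}). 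If $\rk I_S=2$, the map $\alpha_S$ factors through a rank-two subsheaf of $E$; applying Proposition \ref{indicecomputinglem1} to its saturation (and invoking Lemma \ref{LNlem} when that saturation is not semistable), together with Theorems \ref{indicecomputingthm1} and \ref{indicecomputingthm0}, yields $\lambda(F_S,W)\ge\lambda(E)$, hence $\mu(S)\le\mu(M_E)$.

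The main obstacle is the $\rk I_S=2$ subcase when $F_S$ itself nearly computes $\Cliff_2(C)$: the linear-slope inequalities for rank-two subbundles degenerate to equalities, and to conclude the strict inequality required by linear stability one must exploit both the absence of subpencils in $E$ and the explicit decomposition of the Butler diagram via Proposition \ref{diagramlem}, so as to rule out the remaining boundary configurations coming from strictly semistable rank-two subsheaves of $E$.
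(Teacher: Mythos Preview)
Your plan has a genuine gap in the case $\rk F_S\ge 3$ and $\rk I_S=2$. In this situation $\alpha_S\colon F_S\to E$ is \emph{not} injective, so $(F_S,W)$ is not a coherent subsystem of $(E,H^0(E))$ and linear (semi)stability of $E$ says nothing about $\lambda(F_S,W)$ directly. Your suggestion to ``apply Proposition~\ref{indicecomputinglem1} to the saturation'' of $I_S$ cannot work: since $\rk I_S=2=\rk E$, the saturation of $I_S$ inside $E$ is $E$ itself, so no nontrivial inequality results. If instead you apply linear stability to $(I_S,W)$ you obtain $\lambda(I_S,W)\ge\lambda(E)$, but Property~\ref{Butler_prop}(4) gives $\lambda(F_S,W)\le\lambda(I_S,W)$, which is the wrong direction for concluding $\mu(S)\le\mu(M_E)$. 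Finally, your proposed rescue via Proposition~\ref{diagramlem} is unavailable: that proposition needs an exact sequence of generated subsystems of $(E,H^0(E))$, typically coming from a subpencil, and you have already reduced to the case where $E$ has none.

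The paper avoids this obstruction by splitting not on $\rk I_S$ but on $\rk S=\dim W-\rk F_S$, and then arguing with the \emph{dual} $S^*$ rather than with $I_S$. From the Butler construction $W^*\hookrightarrow H^0(S^*)$, so $h^0(S^*)\ge\dim W$; combined with $\deg S^*=\deg F_S<\deg E\le 2(g-1)$ one gets $\mu(S^*)\le g-1$, so $S^*$ contributes to $\Cliff(C)$ (if $\rk S=1$) or to $\Cliff_2(C)$ (if $\rk S=2$). Using Remark~\ref{remark10} to bound $\dim W\le h^0(E)-1\le 5$ and hence $\rk S\le 2$, the Clifford inequality $\Cliff(S^*)\ge\Cliff_2(C)=\Cliff(E)$ together with $\mu(S)\ge\mu(M_E)$ forces $d_E$ to be absurdly small (at most $8$, respectively $8/3$), contradicting $h^0(E)\ge 4$ on a non-hyperelliptic curve. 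This dual-bundle Clifford argument is the missing ingredient in your outline.
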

\begin{proof}	
As in Theorem \ref{MSconjecturerank=2}, if $M_E$ is (semi)stable, then $E$ is linearly (semi)stable.	
 In order to prove the reverse statement, if $E$ is strictly linearly semistable, 
  with a globally generated invertible subsheaf $L\subset E$ satisfying $\lambda(L)=\lambda(E)$, then we have $\mu(M_L)=\mu(E)$; therefore $M_E$ would be strictly semistable. Hence, we may assume $E$ is linearly stable and prove that $M_E$ is stable. \\

   Take a subbundle $S\subset M_E$ of maximal slope. If either $\alpha_S$ is injective, or $\rk I=1$, then $\mu(S)\leq \mu(M_E)$, either by the linear stability of $E$ or by Lemma \ref{finalthrm}, respectively.  This particularly implies that we may assume $\rk(F_S)\geq 3$.
    
    If $\alpha_S$ is injective, then the equality  $\mu(S)=\mu(M_E)$ implies that $E$ is strictly linearly semistable. If $\rk I=1$, then we can apply Lemma \ref{finalthrm}, by which the equality $\mu(S)=\mu(M_E)$ implies that $\deg(E)=\delta_1\cdot(h^0(E)-2)$.  
     By Remark \ref{remark3}, if $h^0(E)=4$, then $\overline{I}$, the saturation of $I$, is a line subbundle of $E$ and satisfies $h^0(\overline{I})=2$, by Lemma \ref{Newsteadlem1}. Hence $\lambda(\overline{I})=\lambda(E)$, and so $E$ is strictly linearly semistable as desired. If 
$d_E=2(h^0(E)-2)$, then $E$ is strictly linearly semistable as in Theorem \ref{MSconjecturerank=2}. \\

In the course of our proof, we focus solely on the case $h^0(E)=6$, since the other cases, $h^0(E)\in \{4, 5\}$, are analogous and simpler.
Recall that $S$ can be assumed to be semistable, and we may also assume $d_E>2(h^0(E)-2)$. This, by Remark (\ref{remark10}) implies that $h^0(F_S)<h^0(E)$, so $\dim W\leq 5$. Since we are assuming $\rk F_S\geq 3$, we have $\dim W-\rk F_S\leq 2$.

 If $\dim W-\rk F_S=2$, then $S^*$ contributes to $\Cliff_2(C)$, and we have $\Cliff_2(S^*)\geq \Cliff_2(E)$. From this, if $\mu(S)\geq \mu(M_E)$, we obtain: 
 $$d_E-4h^0(S^*)\geq 2d_E-4h^0(E)=2d_E-24.$$
 This implies that $d_E\leq 8$, since $h^0(S^*)\geq 4$. By applying the Clifford theorem for $E$, together with \cite[Proposition 2]{Re}, we conclude that $E$ must be isomorphic to one of the following: $\mathcal{O}_C\oplus\mathcal{O}_C$, or $K\oplus K$, or $C$ is hyperelliptic and $E=g^1_2\oplus g^1_2$. However, all these possibilities are ruled out under the assumptions $\mu(E)\leq g-1$ and $d_E=8$. Hence, we conclude that $\mu(S)< \mu(M_E)$.
 
If $\dim W-\rk F_S=1$, then $S^*$ contributes to $\Cliff(C)$, and we have $\Cliff(S^*)\geq \Cliff_2(E)$. From this, the assumption $\mu(S)\geq \mu(M_E)$, would imply
	$$3d_E\leq 8,$$
	 which is clearly absurd. Hence, we conclude that $\mu(S)< \mu(M_E)$.
\end{proof}

\begin{corollary}\label{corsixsections}
Suppose $E$ computes $\Cliff_2(C)$,  $h^0(E)\leq 6$, and $C$ is non-hyperelliptic. Then $M_E$ is semistable.
\end{corollary}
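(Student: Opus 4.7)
\textbf{Proof proposal for Corollary \ref{corsixsections}.}

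The plan is to observe that this corollary is a direct assembly of the three main technical results already proved in this section. Since $E$ computes $\Cliff_2(C)$, the very definition of $\Cliff_2(C)$ forces $h^0(E)\geq 2n=4$, so the hypothesis $h^0(E)\leq 6$ means that $h^0(E)\in\{4,5,6\}$. The strategy is simply to invoke linear semistability from Section \ref{Linearstabilitysection} and then convert it to semistability of $M_E$ via Theorem \ref{h0_less_than 6}.

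First, I would establish that $E$ is linearly semistable. If $h^0(E)=4$, this is precisely the first statement of Theorem \ref{indicecomputingthm0}, which applies with no extra hypothesis on $C$. If $h^0(E)\in\{5,6\}$, then the hypotheses of Theorem \ref{indicecomputingthm1} are met ($C$ non-hyperelliptic and $h^0(E)\geq 5$), and that theorem gives linear semistability of $E$.

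Next, since $h^0(E)\leq 6$ and $C$ is non-hyperelliptic, Theorem \ref{h0_less_than 6} applies and asserts the equivalence between linear (semi)stability of $E$ and (semi)stability of the Lazarsfeld--Mukai bundle $M_E$. Applying the semistable version of this equivalence to the conclusion of the previous paragraph yields that $M_E$ is semistable, as required.

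There is no real obstacle to overcome here: the corollary is essentially a bookkeeping step that packages together Theorem \ref{indicecomputingthm1}, Theorem \ref{indicecomputingthm0}, and Theorem \ref{h0_less_than 6}. The only thing worth double-checking while writing the proof is that the numerical hypothesis $\mu(E)\leq g-1$ built into the notion of computing $\Cliff_2(C)$ is indeed assumed in each of these three theorems, so that no additional verification is needed before chaining them.
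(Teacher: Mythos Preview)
Your proposal is correct and matches the paper's own proof, which simply cites Theorems \ref{h0_less_than 6}, \ref{indicecomputingthm1}, and \ref{indicecomputingthm0} in exactly the way you describe. Your observation that $h^0(E)\geq 4$ is forced by the definition of $\Cliff_2(C)$, and that $\mu(E)\leq g-1$ is built into that definition, are both accurate and make the write-up slightly more self-contained than the paper's one-line version.
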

\begin{proof}
It follows from Theorems \ref{h0_less_than 6}, \ref{indicecomputingthm1} and \ref{indicecomputingthm0}.
\end{proof}

\begin{remark}
If $S$ is as in Theorem \ref{bounddimF_S}, we have $h^0(F_S)\leq h^0(E)$, 
which follows from the properties of the Butler diagram. However, depending on the geometry of $C$, Theorem \ref{bounddimF_S} provides sharper inequalities.

Let $C$ be a general curve of genus $g=8$, and $E$ an extension of the form
$$0\rightarrow Q \rightarrow E \rightarrow K\otimes Q^* \rightarrow 0,$$
where $Q$ is a $g^1_3$. Then $E$ computes $\Cliff_2(C)$, and we have $h^0(E)=6$ and $\frac{d_E}{h^0(E)-2}=\frac{7}{2}$.

If $C$ is a general curve of odd genus $g\geq 9$, then the semistable vector bundles of type 
$$0\rightarrow Q \rightarrow E \rightarrow Q \rightarrow 0,$$
where $Q$ is a $g^1_{\delta_1}$ on $C$, compute $\Cliff_2(C)$. We have $h^0(E)=4$ and
$$\frac{d_E}{h^0(E)-2}=\Big[  \frac{g-1}{2} \Big]$$
See \cite[Proposition 7.2(3), Theorem 7.4(3)]{LNe}.
\end{remark}

\begin{remark}
(i) Since semistability is a crucial component in defining higher rank Clifford indices, the approach by Mistretta and Stoppino is not applicable to vector bundles of rank $\geq 2$.
One of our key results, building upon the work of Lange and Newstead, is that the Clifford index of globally generated co-rank zero subbundles of bundles
computing $\Cliff_2(C)$, is nearly comparable to $\Cliff(C)$. 


\noindent (ii) 
A pivotal component in Mistretta–Stoppino's approach to Conjecture (\ref{MSconjecture}) is a Castelnuovo-type lower bound on the degree of certain line bundles. Specifically, if the multiplication map $H^0(L)\otimes H^0(K)\rightarrow H^0(K\otimes L)$ fails to be surjective, then $\deg(L)\geq \delta_1\cdot (h^0(L)-1)$.
 However, this result does not readily extend to higher-rank vector bundles.
In contrast, our methodology connects the argument to the equivalence between linear stability and slope stability for line subbundles and quotient bundles of the involved rank two bundles. 
\end{remark}

\section{Examples:}

Lange and Newstead characterized the bundles that compute $\Cliff_2(C)$ and $\gamma_2(C)$ in several cases \cite{LNe} by
producing a list of such vector bundles. Recall that, according to the results of Section \ref{Linearstabilitysection}, all these bundles are linearly (semi)stable. Therefore,  the (semi)stablity of the Lazarsfeld–Mukai bundles of the corresponding  Lange–Newstead's bundles follows, in many cases, from the results of the previous section.

\begin{enumerate}
	\item  If 
	$E$ computes $\gamma_2(C)$ with $h^0(E)=3$, then its Lazarsfeld–Mukai bundle is a line bundle, which is stable. On the other hand, if $C$ computes $\gamma_2(C)$ with $h^0(E)\geq 4$, then it also computes $\Cliff_2(C)$. Thus, it suffices to discuss only the bundles that compute $\Cliff_2(C)$. It is straightforward to see that if $L_1$ and $L_2$ are line bundles with $\deg L_1=\deg L_2$ and $h^0(L_1)=h^0(L_2)$, then the Lazarsfeld–Mukai bundle of  $E:=L_1\oplus L_2$
	is semistable. Based on this observation, we will omit further discussion of semistability of bundles of this type.

\item  If $C$ is a curve of Clifford dimension $2$, then it is a smooth plane curve of degree $\delta_2$. If $H$ is the unique hyperplane bundle on $C$, then $h^0(H)=3$ and $H\oplus H$ is the only bundle computing $\Cliff_2(C)$. 

\item  If $C$ is a Petri curve of genus $5$, then the bundles computing $\Cliff_2(C)$ admit one of the following representations: \begin{itemize} 
	\item $0\rightarrow Q \to E\to Q \to 0$ with $h^0(E)=4$ 
	 \item  $0\to Q \to E\to K\otimes Q^* \to 0$ with $h^0(E)=4$, 
	 
	   \item or as $0\to M \to E\to K\otimes M^* \to 0$ with $\deg(M)=2$ and $h^0(M)=1$ and $E$ admits no subpencil.
	   \end{itemize}
	     The Lazarsfeld–Mukai bundles of all such bundles are semistable by Corollary \ref{corsixsections}, as they all satisfy $h^0(E)=4$.

\item The Lazarsfeld–Mukai bundle of any bundle computing $\Cliff_2(C)$ over a tetragonal curve of genus $6$ or $7$ is semistable by Corollary \ref{corsixsections}, as they all satisfy $h^0\leq 6$.

\item The Lazarsfeld–Mukai bundle of a bundle computing $\Cliff_2(C)$ over a general curve of genus $8$
 is semistable by Corollary \ref{corsixsections}, as they all satisfy $h^0\leq 6$. 

\item  For a general curve of genus $g\geq 7$ with $g\neq 8$, the following types of bundles may compute 
$\Cliff_2(C)$ 
 \begin{itemize}  
 \item Bundles which are extensions of the form $0\to Q_1\to E \to Q_2\to 0$ with $h^0(E)=4$ (including 
 the trivial extension). The Lazarsfeld--Mukai bundles of these bundles are semistable by Corollary \ref{corsixsections}, as they all satisfy $h^0(E)=4$. 
	
 \item Possibly, there are bundles which are extensions $0\to Q_1\to E \to K\otimes Q^*_2\to 0$ where all sections of  $K\otimes Q^*_2$ lift. The Lazarsfeld–Mukai bundles of such bundles, would be semistable by Theorem \ref{MSconjecturerank=2}. 
\item Stable bundles  that  do not possess a line subbundle with $h^0\geq 2$. The Lazarsfeld--Mukai bundles of such bundles, if they exist, are stable whenever $h^0(E)\leq 6$. 
\end{itemize}
\end{enumerate}
\section{Rank two Butler Conjecture}
In this section, we apply the concept of linear stability, together with the preceding arguments, to establish an affirmative result for the rank-two Butler conjecture within a specific range of degrees.
\subsection{Background on rank two Butler conjecture}

\begin{definition} \label{pencil}
	Let $C$ be a smooth curve of genus $g$, following the terminology in \cite{Ortetal} and \cite{CH2024},
	
	\begin{enumerate}
		\renewcommand{\labelenumi}{(\roman{enumi})}
		\item 
 we say that a coherent system $(E, V)$ on $C$ admits a subpencil, respectively a subnet, if there exists a rank one coherent subsystem $(L, W)$ such that $W\subset V\cap H^0(L)$ 
 with $\dim W = 2$, respectively $\dim W=3$. We denote
	by $P_0(n, d, k)$, respectively by $N_0(n, d, k)$, the locus in $S_0(n, d, k)$ of coherent systems $(E, V)$ admitting a subpencil, respectively a subnet;
		
		\item we denote by $T(n, d, n+m)$ the locus in $S_0(n, d, n+m)$ where the Butler conjecture is fulfilled, i.e. 
		$$ T(n, d, n+m):=\{(E, V)\in S_0(n, d, n+m) \mid (M^*_{E, V}, V^*)\in S_0(m, d, n+m) \}.$$ 

    \end{enumerate}
\end{definition}
We recall some facts required for the computations in this section.
According to \cite[Proposition 3.2]{Bradlowetal} the dimension of the space of extensions of coherent systems of the form 
\begin{equation}\label{cs_extension}
0\rightarrow (F_1, W_1)\rightarrow (E_0, V_0)\rightarrow (L_2, W_2)\rightarrow 0,
\end{equation}
where $(F_1, W_1)$ and $(L_2, W_2)$  are 
coherent systems of types $(n_1, d_1, k_1)$ and $(n_2, d_2, k_2)$ respectively, can be computed using the invariants $C_{21}$ and $C_{12}$ defined by
\begin{align}
    C_{21}:= n_1n_2(g-1)-d_1n_2 +d_2n_1 +k_2d_1-k_2n_1(g-1)-k_1k_2, 
\end{align}
and $C_{12}$ is defined by interchanging the indices in $C_{21}$. 

In the special case $n_1=n_2=1$, $k_1=2$ and $k_2=3$ these expressions reduce to
\begin{align}\label{C1221}
C_{21}=2d_1+d_2-2g-4 \quad , \quad C_{12} 
=d_2+d_1-g-5. \end{align}
On the other hand, the Zariski tangent space to the moduli space $G(n,d,k;\alpha)$ at a point $(E,V)\in G(n,d,k;\alpha)$ is isomorphic to $\mathrm{Ext}^1((E,V), (E,V))$ whose dimension is the Brill--Nother number $\beta(n,d,k)$ plus the dimension of $\mathrm{Ext}^2$. Then one can write  $\beta(n,d,k)$ in terms of the Brill--Noether number of the coherent systems appearing in the extension \eqref{cs_extension} as follows (\cite[Corollary 3.7]{Bradlowetal})
$$
\beta(n,d,k)=\beta(n_1,d_1,k_1) + \beta(n_2,d_2,k_2)  +C_{12} + C_{21} -1. 
$$

    We briefly review the main steps of the argument of \cite{Ortetal}. 
\begin{enumerate}
    \item $T(n, d, n+m)$ is open in $S_0(n, d, n+m)$. See \cite[Lemma 3.2]{Ortetal}. 
    
\item The map
 $D: (E, V) \mapsto (M^*_{E, V}, V^*) $ induces an isomorphism 
$$T(n, d, n+m)\cong T(m, d, n+m).$$
See \cite[Proposition 3.8]{Ortetal}. 

\item $P_0(n, d, n+m)$ is closed in $S_0(n, d, n+m)$ and under certain numerical hypothesis, $\dim P_0(2, d, 4)< \beta(2, d, 4)$.
We also have  $\dim S_0(n, d, n+m)\geq \beta(n, d, n+m)$, whenever $S_0(n, d, n+m)$ is non-empty. See \cite[Lemma 4.5]{Ortetal} and \cite[Proposition 5.8]{Ortetal}. 

\item Butler’s Conjecture holds for $(n,d,n+m)$ if and only if $T(n,d,n+m)$ is
 dense in $S_0(n,d,n+m)$ and $T(m,d,n+m)$ is dense in $S_0(m,d,n+m)$. See \cite[Theorem 3.9]{Ortetal}.

 \item For any irreducible component $X\subseteq S_0(2, d, 4)$ one has $\dim X\cap T(2, d, 4)=\dim X$.
\end{enumerate}
Summarizing points (1)–(3) above, we obtain that for any irreducible component 
$X$ of $S_0(2, d, 4)$, the locus $T(2, d, 4)\cap X$ is dense in $X$. Consequently, Butler's Conjecture would hold non-trivially once the non-emptiness of $T(2, d, 4)$ is established. This approach was employed in
 \cite{Ortetal} to prove Butler's conjecture for coherent systems of type $(2, d, 4)$.

Here, we adopt a similar approach to prove Butler’s Conjecture for coherent systems of type
$(2, d, 5)$ with 
\begin{align}\label{delta2}
2\delta_2\leq d\leq \frac{3g}{2}.\end{align}
In particular, we will show that $T(2,d,5)$ is
dense in $S_0(2,d,5)$ and $T(3,d,5)$ is dense in $S_0(3,d,5)$. We also establish the non-emptiness of $T(2,d,5)$ and $T(3,d,5)$.
\begin{remark}
Since $D(D(E, V))=(E, V)$, we obtain the equivalence
\begin{align}\label{equivalence}
T(n, d, n+m)\neq \emptyset  \iff T(m, d, n+m)\neq \emptyset.
\end{align}
However, the density of $T(n, d, n+m)$ in $S_0(n, d, n+m)$ does not imply the density of $T(m, d, n+m)$ in $S_0(m, d, n+m)$.
\end{remark}
\begin{convention}

Since, by the definition of $\delta_2$, the inequality \ref{delta2}, does not hold for all genera $g$, we impose the following restrictions on $g=g(C)$, throughout this section:
\begin{itemize}
    \item If $g\equiv 0 \pmod{3}$, then $g\geq 24,$

    \item If $g\equiv 1 \pmod{3}$, then $g\geq 28,$

    \item If $g\equiv 2 \pmod{3}$, then $g\geq 32.$
\end{itemize}
\end{convention}

\subsection{Density of $T(3, d, 5)\subseteq S_0(3, d, 5)$ }
\begin{theorem}\label{Butlerconjthrm3-1}   
Let $C$ be a general curve and 
 $(E, V)$ be a generated coherent system of type $(3,d, 5)$ with $2\delta_2\leq d\leq\frac{3g}{2}$. Assume furthermore that $(E, V)$ does not admit a subnet. Then, the following statements are equivalent.
 \begin{itemize}
 	\item $(E, V)$ is linearly (semi)stable,
 	
 	\item $M_{E, V}$ is slope (semi)stable.
 \end{itemize}
\end{theorem}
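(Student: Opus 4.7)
I would first dispatch the easy direction ``$M_{E,V}$ slope-(semi)stable $\Rightarrow (E,V)$ linearly (semi)stable'', which follows from the observation that any globally generated subsystem $(F,W) \leq (E,V)$ induces a morphism $M_{F,W} \to M_{E,V}$, so slope-(semi)stability of $M_{E,V}$ forces $\mu(M_{F,W}) \leq \mu(M_{E,V})$, equivalently $\lambda(F,W) \geq \lambda(E,V)$.

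For the converse, I would assume $(E,V)$ is linearly (semi)stable without a subnet and suppose for contradiction that a subbundle $S \subset M_{E,V}$ of maximal slope satisfies $\mu(S) \geq \mu(M_{E,V})$ (strict in the stable case). Since $\rk M_{E,V} = 2$, $S$ is a line bundle. The Butler diagram \eqref{Butlerdiagram} attached to $S$ produces $F_S$, a morphism $\alpha_S : F_S \to E$ with image $I_S$, and a subspace $W \subseteq V$ with $\dim W = \rk F_S + 1 \in \{2,3,4,5\}$. Because $V$ injects into $H^0(E)$ and $H^0(S) = 0$, the space $W$ injects into $H^0(F_S)$ and then into $H^0(I_S)$, so $(I_S, W)$ is a genuine globally generated coherent subsystem of $(E,V)$. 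Using $\dim W - \rk F_S = 1$, the identity $\lambda(F_S,W) = \deg F_S = -\mu(S)$ reformulates the destabilization hypothesis as $\lambda(F_S,W) \leq d/2 = \lambda(E,V)$.

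I would then proceed by case analysis on $(\rk F_S, \rk I_S)$. Three collections of cases are straightforward: the \emph{subpencil} case $\rk F_S = 1$, in which $F_S = I_S$ is a line subsheaf and linear (semi)stability of $(I_S,W)$ immediately contradicts $\lambda(F_S,W) \leq d/2$; the \emph{rank-one image with many sections} cases $(\rk F_S, \rk I_S) \in \{(2,1),(3,1),(4,1)\}$, in which the saturation $\overline{I_S} \subset E$ inherits at least three sections from $V$ through $W$, producing a subnet and contradicting the hypothesis; and the \emph{isomorphic image} cases $\rk F_S = \rk I_S \in \{2,3\}$, in which $N_S := \ker \alpha_S$ is torsion-free of rank zero, hence zero, so $F_S \cong I_S$ and linear (semi)stability of $(I_S,W)$ closes the case. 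For $(\rk F_S, \rk I_S) = (4,2)$, Properties \ref{Butler_prop}(4) yields $\deg F_S \leq \tfrac{1}{3}\deg I_S$, and combining this with the linear-stability bound $\deg I_S \geq 3d/2$ on $(I_S,W)$ together with a Clifford-type estimate on the rank-$2$ subsheaf $I_S$ in the range $2\delta_2 \leq d \leq 3g/2$ should force a contradiction.

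The main obstacle will be the intermediate cases $(\rk F_S, \rk I_S) \in \{(3,2),(4,3)\}$. In $(3,2)$ Properties \ref{Butler_prop}(4) does not apply (as $\rk F_S = n$), and one must rule out $N_S$ being a line bundle of degree $\leq -d/2$; my plan is to use a Clifford-type estimate on $F_S$ in the spirit of Lemma \ref{LNlem}, combined with the degree range $2\delta_2 \leq d \leq 3g/2$ and the no-subnet hypothesis, to exclude such $N_S$. In $(4,3)$ one has $I_S = E$, and Properties \ref{Butler_prop}(4) yields only $\deg F_S \leq d/2$, exactly at the boundary: the semistable case is then consistent, while the stable case requires showing that the equality $\deg F_S = d/2$, corresponding to $M_{E,V}$ being strictly semistable via an extension $0 \to S \to M_{E,V} \to N_S \to 0$, would produce a proper coherent subsystem of $(E,V)$ with equal linear slope, contradicting linear stability.
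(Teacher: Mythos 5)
Your overall architecture -- reducing to line subbundles $S$ of the rank-two kernel $M_{E,V}$, the easy direction via $M_{F,W}\to M_{E,V}$, the case $\rk F_S=1$, the subnet argument when $\rk I_S=1$, and the case $\rk F_S=\rk I_S=2$ -- matches the paper's proof in substance. The genuine gap is precisely where you flag ``the main obstacle'': the cases with $\rk F_S\geq 3$, equivalently $\dim W\geq 4$. Your plan for $(3,2)$ invokes a Clifford-type estimate ``in the spirit of Lemma \ref{LNlem}'', but that lemma is stated and proved only for rank-two bundles and does not apply to the rank-three sheaf $F_S$. For $(4,2)$, the inequalities you assemble ($\deg F_S\leq\tfrac13\deg I_S$, the linear-stability bound $\deg I_S>\tfrac{3d}{2}$, and the destabilization bound $\deg F_S\leq \tfrac d2$) are mutually consistent and produce no contradiction unless one also assumes $E$ semistable, which is not a hypothesis of the theorem. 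For $(4,3)$ you concede the stable case sits exactly at the boundary. None of these three cases is closed, so the converse implication is not proved.

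The paper disposes of all of them in one line, and this is the observation you are missing: $S^*$ is a line bundle generated by $W^*=\im(V^*\to H^0(S^*))\subseteq H^0(S^*)$, so $\dim W\geq 4$ forces $h^0(S^*)\geq 4$ and hence, on a general curve, $\deg S^*\geq \delta_3=\lceil \tfrac{3g}{4}+3\rceil$. Therefore
\[
\mu(S)=-\deg S^*\leq -\Big\lceil \tfrac{3g}{4}+3\Big\rceil<-\tfrac{3g}{4}\leq -\tfrac d2=\mu(M_{E,V}),
\]
a strict inequality that settles every case with $\dim W\geq 4$ at once, with no analysis of $\rk I_S$, no appeal to Properties \ref{Butler_prop}(4), and no boundary issue in the stable case; it uses exactly the two hypotheses ($C$ general, $d\leq \tfrac{3g}{2}$) that your sketch does not exploit here. (For the remaining case $\dim W=3$ the paper argues as you do: if $h^0(F_S)\geq 4$ then $\deg F_S\leq 2g-2$ and Lemma \ref{LNlem}, or Mercat's bound in the semistable case, give $\deg F_S\geq g+1>d/2$; if $h^0(F_S)=3$ then either $\alpha_S$ is injective and linear stability applies, or $\rk I_S=1$ and one obtains a subnet.) Replacing your $(3,1),(3,2),(4,1),(4,2),(4,3)$ analysis by this gonality bound repairs the proof.
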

\begin{proof}	
If $M_{E,V}$ is (semi)stable, then $(E, V)$ is trivially linearly (semi)stable. Conversely, suppose that $(E, V)$
is linearly stable and consider an invertible sub-sheaf $S\subsetneq M_E$.

We consider the Diagram of Butler for $(E, V, S)$. If $\dim W=2$, where $W$ is as in Diagram \ref{Butlerdiagram}, then $F_S$ is a locally free sub-sheaf of $E$. The linear stability property of $E$ implies that $\mu(S)<\mu(M_{E,V})$.\\
If $\dim W=3$, then $\rk F_S=2$ and one can prove as in Lemma \ref{Butlerconjthrm3} that $\deg(F_S)\leq 2g-2$.
  Hence, we conclude by Lemma \ref{LNlem},
$$\Cliff_2(F_S)
\geq \Cliff(C)=\left[\frac{g-1}{2}\right],$$
whenever $h^0(F_S)\geq 4$. Therefore, $\deg(F_S)
\geq g+1, $
by which we obtain 
$$\mu(S)\leq -(g+1)<-\frac{3g}{4}\leq \mu(M_E).$$
If in the case $\dim W=3$, one had $h^0(F_S)=3$, then $W=H^0(F_S)$ and either $(F_S, H^0(F_S))$ 
is a rank two generated subsystem of $(E, V)$ or $I_S$ is an invertible subsheaf of $E$. In the former case we have $\mu(S)<\mu(M_E)$, by linear stability of $(E, V)$. While in the latter case we have $W\leq V\cap H^0(I_S)$, and so $(I_S, W)$ is a subnet of $(E, V)$, which contradicts the assumption. Summarizing, we have $\mu(S)<\mu(M_E)$ in the case $\dim W=3$. 

If $\dim W\geq 4$, then $\deg S^*\geq \delta_3= \lceil\frac{3g}{4}+3\rceil$.
Therefore, 
$$\mu(S)\leq -\left\lceil\frac{3g}{4}+3\right\rceil<-\frac{3g}{4}\leq \mu(M_E).$$		
\end{proof}
\begin{theorem}\label{Butlerconjthrm4-1}
Let $C$ be a general curve and 
 $(E, V)$ be a generated coherent system  of type $(3, d, 5)$ with $2\delta_2\leq d\leq\frac{3g}{2}$. If $E$ is semistable and $(E, V)$ does not admit generated rank two subsystems $(T, W)$ with $\dim W=3$, then, $(E, V)$ is linearly stable.
\end{theorem}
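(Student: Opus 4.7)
The plan is to show directly that no globally generated coherent subsystem $(F,W)\subseteq (E,V)$ with $\deg F>0$ can satisfy $\lambda(F,W)\leq \lambda(E,V)=d/2$. I would organize the argument by $\rk F\in\{1,2\}$, and the numerical range $2\delta_2\leq d\leq 3g/2$ combined with the semistability of $E$ should make each case either impossible or covered by the standing hypothesis.

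First I would dispose of the rank one case. For any globally generated rank one subsystem $(L,W)$ we have $\dim W\geq 2$, and passing to the saturation $L^s\subseteq E$ yields $h^0(L^s)\geq h^0(L)\geq \dim W\geq 2$, hence $\deg L^s\geq \delta_1$ by definition of the gonality. Semistability of $E$ forces $\deg L^s\leq d/3\leq g/2$, while on a general curve $\delta_1>g/2$; this is contradictory, so no such $(L,W)$ exists. For rank two subsystems $(T,W)$, the possibilities $\dim W=2$ (which forces $T\cong \mathcal{O}_C^{\oplus 2}$ and hence $\deg T=0$), $\dim W=3$ (excluded by hypothesis), and $\dim W=5$ (which would give $V\subseteq H^0(T)$ and hence $T=E$, incompatible with $\rk T=2$) are all immediate, leaving only the case $\dim W=4$.

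The main obstacle, and the only nontrivial part of the argument, is the case $\dim W=4$. Here the plan is to pass to the saturation $T^s\subseteq E$ and consider the quotient line bundle $L:=E/T^s$. Since $W\subseteq V\cap H^0(T^s)$ has dimension four and $V$ generates $E$ (so $V\not\subseteq H^0(T^s)$, otherwise $T^s=E$), the induced map $V\to H^0(L)$ has image exactly one-dimensional. Pushing the surjection $V\otimes \mathcal{O}_C\twoheadrightarrow E$ down to $L$ then produces a surjection from a one-dimensional space of sections onto a line bundle, forcing $L\cong \mathcal{O}_C$. But then $\deg T^s=d$ and $\mu(T^s)=d/2>d/3=\mu(E)$, contradicting the semistability of $E$. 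Assembling the cases, no linearly destabilizing subsystem of $(E,V)$ exists, so $(E,V)$ is linearly stable.
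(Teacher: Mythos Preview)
Your case analysis for ranks $1$ and $2$ is correct, and your treatment of the rank $2$, $\dim W=4$ case via the quotient $L=E/T^s$ is a clean alternative to the paper's route (which instead observes that any rank $2$ bundle generated by a $4$-dimensional $W\subset V$ is already generated by a general $3$-dimensional subspace $W'\subset W$, reducing to the excluded case $\dim W=3$).

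However, there is a genuine gap: you never consider subsystems $(F,W)$ with $\rk F=3$. Linear stability, as defined in the paper, is tested against \emph{all} generated coherent subsystems with $\deg F>0$, not only those of strictly smaller rank, and the paper's own proof of this theorem treats the rank $3$ case explicitly. Concretely, one must rule out $(F,W)$ with $\rk F=3$ and $\dim W=4$ (the case $\dim W=5$ forces $W=V$ and then $F=E$, while $\dim W=3$ forces $F$ trivial). If $F=E$ and $\dim W=4$ then $\lambda(E,W)=d>d/2=\lambda(E,V)$, which is harmless; but for $F\subsetneq E$ (so $E/F$ is nonzero torsion and $\deg F<d$) one has $\lambda(F,W)=\deg F$, and nothing you have written bounds $\deg F$ from below. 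Your saturation trick does not help here, since $F^s=E$ and $E/F^s=0$.

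The paper closes this case by observing that for such an $(F,W)$ the dual Lazarsfeld--Mukai bundle $M_{F,W}^*$ is a line bundle with $h^0(M_{F,W}^*)\ge 4$, so $\deg F=\deg M_{F,W}^*\ge \delta_3>\tfrac{3g}{4}\ge \tfrac{d}{2}=\lambda(E,V)$. You need to supply this (or an equivalent) argument.
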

\begin{proof}
If $L$ is an invertible subsheaf of $E$, then $\deg(L)\leq \frac{d}{3}\leq \frac{g}{2}$, because $E$ is semistable. As $C$ is general $h^0(L)\leq 1$, so $E$ does not admit any non-trivial generated invertible subsheaf. 

Since $(E, V)$ admits no rank two subsystem $(T, W)$ with $\dim W=3$, so $E$ does not contain any rank two non-trivial generated subsheaf.

If $\rk F=3$, then $h^0(F)=4$ and $M^*_F$ is a line bundle with $h^0(M^*_F)\geq 4$. So $\deg(F)=\deg(M^*_F)\geq \delta_3$ and we obtain $\lambda(F)=\deg(F)\geq \delta_3>\frac{3g}{4}\geq \lambda(E, V)$. 
\end{proof}
We denote by $N^2_0(3, d. 5)$ the locus of generated coherent systems of type $(3, d, 5)$ admitting a generated subsystem of type $(2, d, 3)$.

\begin{corollary}\label{cor(3,d,5)}
$S_0(3, d, 5)\setminus N^2_0(3, d, 5)\subseteq T(3, d, 5).$
\end{corollary}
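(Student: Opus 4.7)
The plan is to chain Theorems \ref{Butlerconjthrm4-1} and \ref{Butlerconjthrm3-1}, bridging their hypotheses by an arithmetic observation that rules out subnets automatically. Given $(E,V) \in S_0(3,d,5) \setminus N^2_0(3,d,5)$, the goal is to show $(M^*_{E,V}, V^*) \in S_0(2,d,5)$. First, since $(E,V)$ is $\alpha$-stable for small $\alpha$, the bundle $E$ is semistable; moreover the hypothesis $(E,V) \notin N^2_0(3,d,5)$ says exactly that $(E,V)$ admits no generated rank-two subsystem with three sections. Theorem \ref{Butlerconjthrm4-1} then yields the linear stability of $(E,V)$.

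Next I would verify that $(E,V)$ admits no subnet, which is the hypothesis of Theorem \ref{Butlerconjthrm3-1}. Any subnet $(L,W)$ with $\dim W = 3$ forces $h^0(L) \geq 3$ and hence $\deg L \geq \delta_2$. On a general curve $\delta_2 = \lceil 2g/3 + 2 \rceil > g/2 \geq d/3$ under the hypothesis $d \leq 3g/2$, whereas $\alpha$-stability of $(E,V)$ for small $\alpha$ forces $\deg L < d/3$, a contradiction. Thus Theorem \ref{Butlerconjthrm3-1} applies and converts the linear stability of $(E,V)$ into slope stability of $M_{E,V}$, equivalently of $M^*_{E,V}$.

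To conclude, slope stability of $M^*_{E,V}$ implies $\alpha$-stability of $(M^*_{E,V}, V^*)$ for $\alpha$ close to $0$: proper subbundles of $M^*_{E,V}$ have strictly smaller slope, while restricting to a proper subspace of $V^*$ on the full bundle strictly decreases the $\alpha$-contribution, so the strict $\alpha$-slope inequality holds for $\alpha$ small enough. The pair $(M^*_{E,V}, V^*)$ is generated by construction, since dualizing the defining sequence $0 \to M_{E,V} \to V \otimes \mathcal{O}_C \to E \to 0$ yields a surjection $V^* \otimes \mathcal{O}_C \twoheadrightarrow M^*_{E,V}$. Therefore $(M^*_{E,V}, V^*) \in S_0(2,d,5)$, i.e., $(E,V) \in T(3,d,5)$. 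The principal subtlety is that Theorems \ref{Butlerconjthrm4-1} and \ref{Butlerconjthrm3-1} rule out different kinds of destabilizing subsystems — generated rank-two subsystems with three sections versus rank-one subsystems with three sections — so the exclusion of $N^2_0(3,d,5)$ alone does not suffice, and the degree bound $d \leq 3g/2$ together with the generality of $C$ is precisely what makes subnets impossible and bridges the two hypotheses.
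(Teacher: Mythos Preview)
Your proposal is correct and follows essentially the same route as the paper: semistability of $E$ from small-$\alpha$ stability, linear stability from Theorem \ref{Butlerconjthrm4-1}, exclusion of subnets via the degree bound $\deg L \leq d/3 \leq g/2 < \delta_2$, then stability of $M_{E,V}$ from Theorem \ref{Butlerconjthrm3-1}. Your concluding step is slightly more explicit than the paper's (you spell out why stability of $M^*_{E,V}$ yields $\alpha$-stability of $(M^*_{E,V},V^*)$ for small $\alpha$ and why the dual system is generated), but the argument is the same.
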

\begin{proof}
    If $(F, U)\in S_0(3, d, 5)\setminus N^2_0(3, d, 5)$, then $(F, U)$ is linearly stable by Theorem \ref{Butlerconjthrm4-1}.   
As $(F, U)$ is $\alpha$-stable for small values of $\alpha$, so $F$ is semistable, and any line bundle $L\subset F$ satisfies $\deg(L)\leq \frac{g}{2}< \delta_2$. Therefore, $(F, U)$ does not admit sub-nets. We conclude by Theorem \ref{Butlerconjthrm3-1}, that $(F, U)$ is stable. Therefore, $D(F, U)=(M^*_{F, U}, U^*)$ is $\alpha$-stable for all $\alpha$, in particular $(F, U)\in T(3, d, 5)$ by definition of $T(3, d, 5)$.
\end{proof}
\begin{proposition}\label{prop(3,d,5)}
Let $C$ be a general curve and $X\subseteq \overline{N^2_0(3, d, 5)}$ be an irreducible component. Then, either a general element of $X$ is linearly stable or we have 
$$\dim X <\beta(3, d, 5).$$
\end{proposition}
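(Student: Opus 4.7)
The plan is to bound $\dim X$ using the extension structure guaranteed by the hypothesis $X\subseteq\overline{N^2_0(3,d,5)}$. For a general $(E,V)\in X$ there is a generated coherent subsystem $(F_1,W_1)\hookrightarrow (E,V)$ of type $(2,d_1,3)$ for some fixed $d_1$; I first check that the induced quotient $(L_2,W_2)=(E,V)/(F_1,W_1)$ is generically of type $(1,d-d_1,2)$, i.e.\ that $W_1=V\cap H^0(F_1)$ and the sequence on sections is exact on the right.

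Stratifying $X$ by the invariant $d_1$, each stratum is dominated by the family of extension data $\bigl((F_1,W_1),(L_2,W_2),[e]\bigr)$ with $[e]\in\mathbb{P}\Ext^1\bigl((L_2,W_2),(F_1,W_1)\bigr)$, so that
\[
\dim X \leq \dim \cM(2,d_1,3)+\dim \cM(1,d-d_1,2)+\dim\Ext^1-1.
\]
On a general curve the moduli of coherent systems attain their expected dimensions $\beta(n_i,d_i,k_i)$, and generically $\dim\Ext^1=C_{21}$ via the vanishing of $\Hom$ and $\Ext^2$ in the category of coherent systems. Combined with the identity
\[
\beta(3,d,5)=\beta(2,d_1,3)+\beta(1,d-d_1,2)+C_{12}+C_{21}-1,
\]
this collapses to $\dim X\leq\beta(3,d,5)-C_{12}$. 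By \eqref{C1221}, $C_{12}=d-g-5>0$ throughout the range $d\geq 2\delta_2$ under the section's genus conventions (since $2\delta_2\geq\tfrac{4g}{3}+4>g+5$), yielding the second alternative of the dichotomy. The first alternative---that the general element is linearly stable---accommodates those components where the defining subsystem satisfies $d_1>d/2$, so that it does not linearly destabilize $(E,V)$, and where no other subsystem destabilizes either.

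The main obstacle is the generic vanishing $\Ext^2=0$ required to identify $\dim\Ext^1=C_{21}$, together with care at boundary strata where $\beta(2,d_1,3)$ or $\beta(1,d-d_1,2)$ is small or negative; in those cases the corresponding moduli space is either empty or has dimension smaller than the naive $\beta$-bound, so the inequality only improves. The generic vanishing itself is a standard consequence of deformation theory for coherent systems on a general curve.
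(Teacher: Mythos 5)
Your overall strategy---stratify $X$ by the destabilizing rank-two subsystem and bound the stratum by (parameters for the sub) $+$ (parameters for the quotient) $+$ (extension classes) $-1$---is the same as the paper's, but the step carrying all the weight is asserted rather than proved. To identify $\dim\Ext^1\bigl((L_2,W_2),(F_1,W_1)\bigr)$ with $C_{21}$ you need $\Hom=0$ and, crucially, $\Ext^2\bigl((L_2,W_2),(F_1,W_1)\bigr)=0$. By \cite[Proposition 3.2]{Bradlowetal} this $\Ext^2$ is dual to the kernel of the multiplication map $W_2\otimes H^0(K\otimes F_1^*)\to H^0(K\otimes F_1^*\otimes L_2)$, which by the base point free pencil trick is $H^0(K\otimes F_1^*\otimes L_2^{-1})\cong H^1(F_1\otimes L_2)^*$. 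This vanishing is not a ``standard consequence of deformation theory'': $F_1$ is Brill--Noether special by construction ($h^0(F_1)=3$ with $d_1$ close to $\delta_2$), the slope of $F_1\otimes L_2$ is well below $2g-2$ throughout the range $d\le\frac{3g}{2}$, and even if $h^1(F_1\otimes L_2)=0$ held for a generic pair, the component $X$ could be supported over the jumping locus, where $\dim\Ext^1$ exceeds $C_{21}$ by exactly the quantity you would need to absorb. The fact that your count would give $\dim X\le\beta(3,d,5)-C_{12}$ with $C_{12}=d-g-5>0$ \emph{unconditionally} --- making the first alternative of the dichotomy superfluous --- is itself a warning sign: the paper needs the linear-stability alternative precisely in the cases where the count does not close (when every generated rank-two subsheaf has $h^0(F)\ge 4$, or when $h^0(M_F^*)\ge 4$, where instead gonality/Clifford bounds from Lemma \ref{LNlem} give $\deg F\ge g+2$ or $\deg M_F^*\ge\delta_3$ and hence linear stability directly).

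The paper's actual route replaces the missing vanishing by a quantitative bound: it dualizes to $0\to L\to M^*_{E,V}\to M^*_F\to 0$, observes that the lifting of sections forces the extension class into $\Coker(P_\eta)$ for an explicit multiplication map $P_\eta$, and bounds $\dim\Coker(P_\eta)$ by combining a rank estimate for $P_\eta$ (base point free pencil trick after twisting by a general point) with the degree estimate $h^0(L\otimes M_F)\le 1$, which follows from $d_F\ge\delta_2$. That is where the real work lies, and it is absent from your sketch. Two smaller points: the criterion you give for the linearly stable alternative is not the right one, since for a rank-two subsystem $(F,W)$ with $\dim W=3$ of a system of type $(3,d,5)$ the comparison is $\lambda(F,W)=d_F$ against $\lambda(E,V)=d/3$ (not $d_1>d/2$), and one must also exclude all other destabilizing subsystems; and the appeal to ``moduli of coherent systems attain their expected dimensions'' for the rank-two family of subsystems needs the reduction (used in the paper) to the rank-one Brill--Noether locus $W^2_{d_F}$ via $F\mapsto\det F$, since the subsystems in question need not be $\alpha$-stable.
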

\begin{proof} 
If $(E, V)\in X$ is a general element, then $(E, V)$ sits in an exact sequence 
\begin{align}\label{secondboundseq}
\gamma: 0\rightarrow (F, W)\rightarrow (E, V)\rightarrow (L, \overline{W})\rightarrow 0,\end{align}
where $(F, W)$ and $(L, \overline{W})$ are generated coherent systems of types $(2, d_F, 3)$ and $(1, d_L, 2)$, respectively. 

Since $F$ is generated, we have
\begin{align}\label{Claim}d_F\ge \delta_2.\end{align}
Indeed,
since $E$ is semistable, we have 
$d_F\leq g,$
which implies that $\det(F)\neq 2g^1_{\delta_1}$. Therefore, we may apply \cite[Proposition 2.2]{T07}, from which it follows that in the sequence $0\rightarrow \mathcal{O}_C \rightarrow F \rightarrow \det(F) \rightarrow 0$, not all sections of $\det(F)$ lift to $F$. In particular, we have $h^0(\det(F))\geq 3$, and hence 
$\deg(\det(F))\geq \delta_2$. This establishes \ref{Claim}.

Suppose first that every non-trivial and generated rank two locally free sub-sheaf $F\subset E$ 
satisfies $h^0(F)\geq 4$.
Then, $(E,V)$ is linearly stable. In order to see this, since $E$ is semistable and $\mu(E)<\delta_1$, $E$ does not contain any non-trivial invertible subsheaf. If $F\subset E$ is any rank two globally generated subsheaf, then $F$ does not admit a  trivial quotient bundle, otherwise $F$ would have a subpencil, which is not 
possible since $\mu(E)<\delta_1$.   According to Lemma \ref{LNlem}  we have 
$d_F\geq g+2$ . So, 
$$\lambda(F, W)\geq  g+2  >\frac{3g}{4}\geq \lambda(E, V),$$
as required.

Assume secondly, that $(E, V)$ contains a subsystem $(F, W)$
as in \ref{secondboundseq} and $W=H^0(F)$. We consider the sequence 
 $0\rightarrow M^*_{L, \overline{W}}\rightarrow M^*_{E,V}\rightarrow M^*_F\rightarrow 0,$
 which by $M_{L, \bar{W}}=L^*$, is actually the sequence 
\begin{align}\label{exactseq32}
\eta: 0\rightarrow L\rightarrow M^*_{E,V}\rightarrow M^*_F\rightarrow 0.
\end{align}
Notice that any element of $H^0(F)^*\subseteq H^0(M^*_F)$ lifts to a section of $M^*_{E, V}$, because the elements of $H^0(F)^*=W^*$ lift to $V^*$. Therefore, $\eta\in \Coker(P_\eta)$, with $P_\eta$ the multiplication map associated to the extension $\eta$ in \ref{exactseq32}.
We distinguish two cases: 
\begin{itemize}
    \item For any $F$ appearing in the sequence $\gamma$ with $h^0(F)=3$, we have $h^0(M^*_F)\geq 4$,
     \item There exists $F$ appearing in the sequence $\gamma$ with $h^0(F)=3$ such that $H^0(F)^*= H^0(M^*_F)$.
\end{itemize}
If we are in the first case, then $\deg(M^*_F)\geq \delta_3$. So $\lambda(F)\geq \delta_3> \frac{3g}{4} \geq \lambda(E, V)$ and the coherent system $(E, V)$ is linearly stable, as previously. 

Denoting by $X_0$ the locus of coherent systems $(E, V)$ satisfying the property for the second case, we set 
$$t:=\min\{h^0(L): (L, \overline{W})\quad \!\!\!\!\mbox{appears in the sequence \ref{secondboundseq} with}\quad \!\!\!\! (E,V)\in X_0\}.$$
Then $t\geq 2$. We shall prove 
\begin{align}\label{theineq3}
\dim X_0< \beta(3, d, 5)=\beta(2, d, 5).\end{align}
As \ref{exactseq32} is obtained from \ref{secondboundseq} uniquely, we have 
\begin{align}\label{111}
    \dim X_0\leq \beta(1, d_F, 3)+\beta(1, d_L, t)+\dim \Coker(P_\eta)-1.
\end{align}
\textit{First subcase:} $t\geq 3$.  
Take a coherent system $(E,V)\in X_0$ such that $(E, V)$ admits a sequence as \ref{secondboundseq} with $h^0(L)=t$.
For general $q\in C$, the multiplication map 
$$P^q_\eta: H^0(M^*_F(-q))\otimes H^0(K\otimes L^*) \rightarrow  H^0(K\otimes M^*_F\otimes L^*(-q)),$$
satisfies 
$$\dim \ker P^q_\eta=h^0(K\otimes L^*\otimes M_F(q))\leq 1.$$ 
Hence $\dim \im P_{\eta}^q \geq 2h^0(K\otimes L^*) - 1$, and
 since $\dim \im P_\eta \geq  \dim \im P_{\eta}^q \geq 2h^0(K\otimes L^*) - 1$, we have 
\begin{align}\label{Coker(P)}
\dim \Coker P_\eta \leq  h^0(M_F^*\otimes K \otimes L^*) -2h^0(K\otimes L^*) +1.
\end{align}

Assume first $t=3$. Then, $(M_{E,V}^*, V^*) \in N_0(2,d,6)$ and we have 
$$\beta(2,d, 5)=(\beta(2,d,5)-\beta(2,d,6))+\beta(2,d,6)=(2g-d+9)+[\beta(1,d_F,3)+\beta(1,d_L,3)+\bar{C}_{21}+\bar{C}_{12}-1],$$
with $\bar{C}_{21}$ and $\bar{C}_{12}$ the invariants associated to the types $(1, d_F, 3)$ and $(1,d_L, 3)$. 
From this, by \ref{111}, the inequality \ref{theineq3} holds if
$$\dim \Coker(P_\eta)< \bar{C}_{21}+\bar{C}_{12}+(2g-d+9).$$
On the other hand, a direct computation implies 
$$\bar{C}_{21}+\bar{C}_{12}=3d-4(g-1)-18.$$
Therefore, by \ref{Coker(P)}, we have to prove 
\begin{align}\label{-11}
     h^0(M_F^*\otimes K \otimes L^*)< 2h^0(K\otimes L^*)+2d-2g-6.\end{align}
By Riemann--Roch, $h^0(K\otimes L^*)=g-d_L+2$, therefore the inequality \ref{-11} turns to be
$$ h^0(M_F^*\otimes K \otimes L^*) <2(g-d_L+2)+2d-2g-6=2d-2d_L-2.$$
This, by a simple computation, is equivalent to 
\begin{align}\label{-12}
 h^0(L\otimes M_F)<d_F+d_L-g-1=d-g-1.\end{align}
Since, by \ref{Claim}, we have $\deg(L\otimes M_F)\leq \frac{g}{6}$, so 
\begin{align}\label{Claimapplication}
h^0(L\otimes M_F)\leq 1.\end{align}
Now, \ref{-12} holds as follows  
$$d-g-1\geq 2\delta_2-g-1\geq 2\left(\frac{2g}{3}+1\right)-g-1=\frac{g}{3}+1>1\geq h^0(L\otimes M_F),$$
for $g\geq 4$.

Still within the first subcase, suppose that
$t\geq 4$. According to \cite[Corollary 3.7]{Bradlowetal},  
$$\beta(2, d, 5)=\beta(1, d_F, 3)+\beta(1, d_L, 2)+C_{21}+C_{12}-1,$$ with $C_{21}$ and $C_{12}$ the invariants associated to the types $(1, d_F, 3)$ and $(1, d_L, 2)$. So, \ref{theineq3} will hold if 
$$ \dim \Coker(P_\eta)< [\beta(1, d_L, 2)-\beta(1, d_L, t)]+C_{21}+C_{12}.$$
Taking into account the equality $$C_{21}+C_{12}=-3(g-1) +3d_L +2d_F -12,$$
obtained from \ref{C1221}, a straightforward calculation shows that
$$[\beta(1, d_L, 2)-\beta(1, d_L, t)]+C_{21}+C_{12}=t h^0(K\otimes L^*) -5g +5d_L +2d_F -11.$$
Therefore, it remains to prove
\begin{align}\label{cokerdim}
\dim \Coker(P_\eta)< t\cdot h^0(K\otimes L^*) -5g +5d_L +2d_F -11.
\end{align}
Hence, in view of \ref{Coker(P)}, it suffices to show that 
\begin{align}\label{-5}
h^0(M_F^*\otimes K \otimes L^*) < (t+2)h^0( K \otimes L^*) -5g+5d_L +2d_F-12.
\end{align}
 By \ref{Claimapplication},
 we have
\begin{align}\label{-6}
h^0(M_F^*\otimes K \otimes L^*) = h^0(M_F\otimes L)+(g-1)-d_L+d_F \leq g+d_F-d_L.
\end{align}
On the other hand, we have  
$h^0( K \otimes L^*)=t+g-d_L-1.$ Hence  
by \eqref{-6}, inequality \eqref{-5} will follow once we show that
$$
g+d_F-d_L< (t+2)(t+g-1-d_L) - 5g +5d_L +2d_F -12, 
$$
which is equivalent to 
$$
0< (t+2)(t+g-1-d_L) - 6g +6d_L +d_F -12=:B.
$$
Since $d_L\leq g$, the invariant $B$ satisfies
$$B=(t-4)g-(t-4)d_L+(t+2)(t-1)+d_F-12\geq d_F+18-12>0,$$
as needed. 

\textit{Second subcase:} If $t=2$, i.e., for some rank one coherent system $(L, \overline{W})$ appearing in the sequence \ref{secondboundseq}, we have $\overline{W}=H^0(L)$, then $V=H^0(E)$ and the sequence \ref{secondboundseq} is a sequence of complete coherent systems. Furthermore, all sections of $H^0(L)$ lift to $E$. Therefore, the extension $\eta$ in \ref{secondboundseq} belongs to $\Coker P_\gamma$, where $P_\gamma$ is the multiplication map associated with this sequence. 
If $\mathcal{F}$ denotes the locus over which $F$ varies, then 
$$\dim X_0\leq \dim \mathcal{F}+\beta(1,d_L,2)+\dim \Ext^1(L, F)-1-\dim \im (P_\gamma).$$
The association $F\mapsto D(F, H^0(F))\in S_0(1, d_F, 3)$ is injective, implying that 
$$\dim X_0\leq \beta(1, d_F, 3)+\beta(1,d_L,2)+\dim \Ext^1(L, F)-1-\dim \im (P_\gamma).$$
Consider that for any $E\in \Ext^1(L, F)$ we have $M_E^*\in \Coker(P_\eta)$, therefore the injective association $(E,H^0(E))\mapsto D(E, H^0(E))$ implies 
\begin{align}
\dim X_0\leq \beta(1,d_F,3)+\beta(1,d_L,2)+\dim \Coker(P_\eta)-1-\dim \im (P_\gamma).
\end{align}
Therefore, \ref{theineq3} will hold if  
\begin{align}\label{-13}
    \dim \Coker(P_\eta)-\dim \im (P_\gamma)<C_{21}+C_{12}=-3(g-1) +3d_L +2d_F -12.\end{align}
By \ref{Coker(P)} and since $h^0(K\otimes L^*)=g-d_L+1$ we have 
$$\dim \Coker P_\eta\leq  h^0(M_F^*\otimes K \otimes L^*) -2h^0(K\otimes L^*)+1=h^0(M_F^*\otimes K \otimes L^*) -2\cdot(g-d_L+1)+1.$$
Since $h^0(L\otimes M_F)\leq 1$ by \ref{Claimapplication}, we obtain
$$h^0(K\otimes L^*\otimes M_F^*)=h^0(L\otimes M_F)-d_L+d_F+g-1\leq g+d_F-d_L.$$
Thus
$$\dim \Coker P_\eta\leq g+d_F-d_L-2\cdot(g-d_L+1)+1=
d_F+d_L-g-1=d-g-1.$$
On the other hand,
$\dim \im(P_\gamma)=2.(2g-d_F+1)-h^0(K\otimes L^*\otimes F^*),$ and since $F$ is generated,  
$$h^0(L\otimes F)\leq h^0(L)+h^0(L\otimes \det(F))\leq 2+d-g+3\leq \frac{g}{2}+5.$$ 
Hence,
$$h^0(K\otimes L^*\otimes F^*)=h^0(L\otimes F)+2(g-1)-d_F-2d_L\leq 
\frac{5g}{2}+3-d_F-2d_L.$$
Therefore,
$$\dim \im(P_\gamma)\geq (4g-2d_F+2)-\left(\frac{5g}{2}+3-d_F-2d_L\right)=\frac{3g}{2}-d_F+2d_L-1.$$
Summarizing, we conclude
$$\dim \Coker(P_\eta)-\dim \im (P_\gamma)\leq (d-g-1)-\left(\frac{3g}{2}-d_F+2d_L-1\right)=d+d_F-2d_L-\frac{5g}{2}.$$
Hence, inequality \ref{-13} will hold if 
$$d+d_F-2d_L-\frac{5g}{2}<-3(g-1)+3d_L+2d_F-12,$$
which follows immediately from $d_L>\delta_1$.
\end{proof}

\begin{corollary}
If non-empty, then $T_0(3,d,5)\subseteq S_0(3, d, 5)$ is dense. 
\end{corollary}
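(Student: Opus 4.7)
The plan is to exploit the openness of $T(3,d,5)$ inside $S_0(3,d,5)$ (item (1) of the background discussion) and reduce density to showing that $T(3,d,5)$ meets every irreducible component of $S_0(3,d,5)$ non-trivially. So fix any irreducible component $Y$ of $S_0(3,d,5)$; it satisfies $\dim Y \geq \beta(3,d,5)$.

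The first case to handle is $Y \not\subseteq \overline{N^2_0(3,d,5)}$. Then $Y \setminus \overline{N^2_0(3,d,5)}$ is a non-empty open subset of $Y$, and Corollary \ref{cor(3,d,5)} places this open set inside $T(3,d,5)$; combined with the openness of $T(3,d,5)$, this yields density of $Y \cap T(3,d,5)$ in $Y$.

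The second case, $Y \subseteq \overline{N^2_0(3,d,5)}$, is where Proposition \ref{prop(3,d,5)} enters. I would pick an irreducible component $X$ of $\overline{N^2_0(3,d,5)}$ containing $Y$. Since $X$ is an irreducible closed subset of $S_0(3,d,5)$ containing the irreducible component $Y$, one must have $X = Y$. Proposition \ref{prop(3,d,5)} applied to this $X = Y$ then produces two alternatives: either $\dim X < \beta(3,d,5)$, which contradicts $\dim Y \geq \beta(3,d,5)$, or a general element of $Y$ is linearly stable. Hence the latter holds. For such a general $(E,V) \in Y$, $\alpha$-stability for small $\alpha$ forces slope semistability of $E$, so every line subbundle $L \subset E$ satisfies $\deg L \leq d/3 \leq g/2 < \delta_1$. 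On the general curve $C$ this forces $h^0(L) \leq 1$, so $(E,V)$ admits neither a subpencil nor a subnet. Theorem \ref{Butlerconjthrm3-1} then promotes linear stability of $(E,V)$ to slope stability of $M_{E,V}$, placing $(E,V)$ in $T(3,d,5)$. Thus $Y \cap T(3,d,5)$ is non-empty and, by openness, dense in $Y$.

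The main subtlety is the identification $Y = X$ in the second case, which is what allows Proposition \ref{prop(3,d,5)} to be read as a statement about components of $S_0(3,d,5)$ rather than just of $\overline{N^2_0(3,d,5)}$. Once this is secured, the no-subpencil/no-subnet verification for the general element of $Y$ follows immediately from the numerical bound $d \leq 3g/2$ together with the gonality of the general curve, and Theorem \ref{Butlerconjthrm3-1} closes the argument.
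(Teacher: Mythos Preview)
Your argument is correct and follows essentially the same route as the paper: the one-line proof in the paper cites Theorem \ref{Butlerconjthrm3-1}, Theorem \ref{Butlerconjthrm4-1} and Proposition \ref{prop(3,d,5)}, and your two-case analysis via Corollary \ref{cor(3,d,5)} (which packages Theorems \ref{Butlerconjthrm3-1} and \ref{Butlerconjthrm4-1}) together with Proposition \ref{prop(3,d,5)} is precisely an unpacking of that citation, including the key maximality step $X=Y$ that lets you read Proposition \ref{prop(3,d,5)} as a statement about components of $S_0(3,d,5)$.
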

\begin{proof}
This follows from Theorem \ref{Butlerconjthrm3-1}, Theorem \ref{Butlerconjthrm4-1} and Proposition \ref{prop(3,d,5)}.  
\end{proof}

\subsection{Density of $T(2, d, 5)\subseteq S_0(2, d, 5)$}

\begin{theorem}\label{Butlerconjthrm3}
Let $C$ be a general curve and 
 $(E, V)$ is a generated coherent system of type $(2,d, 5)$ with $2\delta_2\leq d\leq\frac{3g}{2}$. Then, the following statements are equivalent.
 \begin{itemize}
 	\item $(E, V)$ is linearly (semi)stable,
 	
 	\item $M_{E, V}$ is slope (semi)stable.
 \end{itemize}
\end{theorem}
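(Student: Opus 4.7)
The plan is to mirror the case analysis of Theorem \ref{Butlerconjthrm3-1}, now adapted to the fact that $M_{E,V}$ has rank $3$ (so destabilizing subbundles $S\subsetneq M_{E,V}$ may have rank $1$ or $2$) and to the adjusted slope $\lambda(E,V)=d/3=-\mu(M_{E,V})$. The direction $M_{E,V}$ (semi)stable $\Rightarrow (E,V)$ linearly (semi)stable is standard; I focus on the converse. Assume $(E,V)$ is linearly stable, pick a subbundle $S\subsetneq M_{E,V}$ of maximal slope, and form its Butler diagram. Setting $r=\rk S$ and $w=\dim W$, one has $\rk F_S=w-r\geq 1$, and I would split the analysis by the pair $(r,w)$.

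When $\rk F_S=1$, i.e.\ $(r,w)\in\{(1,2),(2,3)\}$, $F_S$ is a line subbundle of $E$ generated by $W$, so $(F_S,W)$ is a generated coherent subsystem of $(E,V)$; linear stability yields $\deg F_S/(w-1)>d/3$, whence $\mu(S)=-\deg F_S/r<-d/3$. When $\rk F_S\geq 3>\rk E$, i.e.\ $(r,w)\in\{(1,4),(1,5),(2,5)\}$, Property \ref{Butler_prop}(4) provides a sharp upper bound on $\deg F_S$, and the gonality estimate $\deg(S^*)\geq\delta_{h^0(S^*)-r}$ combined with the hypothesis $d\leq 3g/2$ and the asymptotic $\delta_k\sim kg/(k+1)$ yields $\mu(S)<-d/3$, exactly along the lines of Theorem \ref{Butlerconjthrm3-1}.

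The main obstacle is the intermediate case $\rk F_S=2=\rk E$, occurring at $(r,w)\in\{(1,3),(2,4)\}$. Here $F_S$ is a globally generated rank-two bundle with $h^0(F_S)\geq w$, and $\alpha_S \colon F_S\to E$ may fail to be injective. If $\alpha_S$ is injective, then $(F_S,W)$ is a rank-two generated coherent subsystem of $(E,V)$ and linear stability directly yields $\lambda(F_S,W)>d/3$. Otherwise $\rk I_S=1$, and the saturation $\bar I_S$ is a line subbundle of $E$; with $W_{\bar I_S}:=\im(W\to H^0(\bar I_S))$ of dimension $\geq 2$, the pair $(\bar I_S,W_{\bar I_S})$ is a subpencil or subnet of $(E,V)$. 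I would combine linear stability of this subsystem with Lemma \ref{LNlem} applied to $F_S$, which admits no trivial quotient by Property \ref{Butler_prop}(3) and satisfies $\mu(F_S)\leq g-1$ via $\deg F_S<\deg E\leq 3g/2$, to conclude $\Cliff(F_S)\geq \Cliff(C)$, forcing $\deg F_S\geq 2(h^0(F_S)-2)+2\Cliff(C)$ and exceeding the destabilizing threshold in the range $d\in[2\delta_2,3g/2]$.

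The subtlest sub-subcase is $w=h^0(F_S)$, in which the descended subsystem $(\bar I_S,W_{\bar I_S})$ may only marginally fail to linearly destabilize $(E,V)$; here the sharp lower bound $d\geq 2\delta_2$ on a general curve, together with the explicit value $\delta_2=\lceil 2g/3\rceil+2$, is what excludes the borderline equality case. I expect this final tightening, together with careful bookkeeping of when $H^0(F_S)\supsetneq W$, to constitute the bulk of the technical work.
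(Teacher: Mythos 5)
Your case skeleton (splitting by $\rk S$ and $\dim W$) is close to the paper's, and the cases $\rk S=1$ and $\rk F_S=1$ (where $\alpha_S$ is automatically injective and linear stability applies directly) are handled correctly. But there is a genuine numerical gap in the case $\rk S=2$, $\dim W=5$, i.e.\ $\rk F_S=3$. Your proposed bound $\deg(S^*)\geq \delta_{h^0(S^*)-r}=\delta_3$ gives only
$$\mu(S)=-\tfrac{1}{2}\deg(S^*)\leq -\tfrac{1}{2}\delta_3\approx -\tfrac{3g}{8}\comma$$
whereas you must show $\mu(S)<\mu(M_{E,V})=-d/3$, and $d/3$ can be as large as $g/2$. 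Since $3g/8<g/2$ for all $g$, the estimate fails precisely in the upper part of the degree range $d\leq 3g/2$; Property \ref{Butler_prop}(4) does not rescue this, since bounding $\deg F_S$ from above is the wrong direction. The paper closes all rank-two cases with $\dim W\geq 4$ at once by a different mechanism: $S^*$ is a semistable rank-two bundle generated by $W^*$ with $h^0(S^*)\geq 4$ and $\mu(S^*)\leq g-1$ (the latter because $M_{E,V}/S$ embeds in a trivial bundle, so $\deg S^*\leq d\leq 3g/2\leq 2g-2$), hence $S^*$ contributes to $\Cliff_2(C)$; the assumption $\mu(S)\geq\mu(M_{E,V})$ then forces $\Cliff_2(C)\leq d/3-2\leq g/2-2$, contradicting the Bakker--Farkas theorem $\Cliff_2(C)=\Cliff(C)=\lfloor(g-1)/2\rfloor$ on a general curve. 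This Mercat-type input is the essential external ingredient, and your proposal never invokes it.

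Two smaller issues. First, in your $\rk F_S=2$, $\rk I_S=1$ subcase you invoke Lemma \ref{LNlem}, which applies only when $F_S$ is \emph{not} semistable; when $F_S$ is semistable with $h^0(F_S)\geq 4$ you again need the $\Cliff_2(C)=\Cliff(C)$ argument, so the dichotomy must be stated. Second, the elaborate analysis of $\bar I_S$, subpencils/subnets, and the ``marginal'' subcase $w=h^0(F_S)$ that you flag as the technical heart is unnecessary here: once the rank-two destabilizing subbundles with $\dim W\geq 4$ are dispatched via the Clifford index of $S^*$, the only remaining case is $\dim W=3$, where $F_S$ is a line subbundle of $E$ and plain linear stability finishes the proof. (The subnet analysis you describe is what the paper needs for the rank-three analogue, Theorem \ref{Butlerconjthrm3-1}, not here.)
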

\begin{proof}	
If $M_{E,V}$ is (semi)stable, then $(E, V)$ is trivially linearly (semi)stable. Conversely, suppose $(E, V)$
is linearly stable, and consider a proper semistable subbundle $S\subsetneq M_E$. 
If $\rk S=1$ and $S$ destabilizes $M_E$ then 
$\deg(S^*)\leq\frac{d_E}{3}\le \frac{g}{2}$.  This is impossible on a general curve because $h^0(S^*)\geq 2$.

Now assume $\rk S=2$, so $3\leq \dim W\leq 5$. Note that $Q=M_E/S$ is a subsheaf of the trivial bundle 
$V/W\otimes \mathcal{O}_C$. Therefore, $\deg Q\leq 0$, implying $\deg(M_E)\leq \deg S$, and consequently 
$$\deg S^*\leq \deg M^*_E =\deg E\leq \frac{3g}{2}\leq 2g-2.$$
\noindent Hence, if $4\leq \dim W\leq 5$, then $S^*$ contributes in $\Cliff_2(C)$.
The inequality $\mu(S)\geq \mu(M_E)$ would imply 
$$\Cliff_2(C)\leq \frac{-\deg(S)}{2}-2\leq \frac{d_E}{3}-2\leq \frac{g}{2}-2,$$
which contradicts the equality $\Cliff_2(C) = \Cliff(C)$. This, by \cite{BF}, is absurd on a general curve.

If $\dim W=3$, then $\alpha_S$ is injective and the result follows from the definition of linear (semi)stability for $E$. 		
\end{proof}

\begin{theorem}\label{Butlerconjthrm4}
Let $C$ be a general curve and 
 $(E, V)$   
 a generated coherent system of type $(2, d, 5)$ with $2\delta_2\leq d\leq\frac{3g}{2}$. If $(E, V)$ does not  admit a rank one subsystem $(L, W)$ with $\dim W=3$, i.e.,  $(E, V)\notin N_0(2, d, 5)$, then $(E,V)$ is linearly stable.
\end{theorem}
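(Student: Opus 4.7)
My plan is to verify linear stability by checking that every globally generated proper subsystem $(F, W) \leq (E, V)$ satisfies $\lambda(F, W) > \lambda(E, V) = d/3$. Setting $r := \rk F$ and $k := \dim W$, the standing hypothesis $d \leq 3g/2$ together with the gonality bounds $\delta_1, \delta_2 > g/2$ on a general curve will do most of the heavy lifting once $\deg F$ is suitably constrained.

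If $r = 1$, then $F = L$ is a line subbundle of $E$ generated by $W \subseteq V \cap H^0(L)$. The hypothesis $(E, V) \notin N_0(2, d, 5)$ forces $\dim(V \cap H^0(L)) \leq 2$, so $k = 2$ and $\lambda(L, W) = \deg L$. Since $L$ is non-trivially generated, $h^0(L) \geq 2$, so on a general curve $\deg L \geq \delta_1 > g/2 \geq d/3$, which handles the rank-one case.

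For $r = 2$ only $k \in \{3, 4\}$ can give a proper subsystem, and I would split the argument according to whether $F$ admits a trivial line bundle quotient. In the trivial-quotient case, \cite[Lemma 1.1]{Ba} (as used in Lemma \ref{LNlem}) provides a splitting $F = L \oplus \mathcal{O}_C$; the projection $W \hookrightarrow H^0(F) = H^0(L) \oplus \mathbb{C} \twoheadrightarrow \mathbb{C}$ then has kernel of dimension at least $k - 1$ contained in $W \cap H^0(L) \subseteq V \cap H^0(L)$. For $k = 4$ this exhibits a subnet, contradicting the hypothesis; for $k = 3$ it forces $h^0(L) \geq 2$, whence $\deg L \geq \delta_1$ and $\lambda(F, W) = \deg F = \deg L > d/3$. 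If $F$ has no trivial quotient, the case $\mu(F) > g - 1$ is immediate from $\deg F > 2g - 2$; when $\mu(F) \leq g - 1$ I would use the defining inequality for $\Cliff_2(C)$ (if $F$ is semistable with $h^0(F) \geq 4$) or Lemma \ref{LNlem} (if $F$ is non-semistable) to conclude $\Cliff(F) \geq \Cliff(C)$, invoking $\Cliff_2(C) = \Cliff(C)$ on a general curve via \cite{BF}. This yields $\deg F \geq 2\Cliff(C) + 2 h^0(F) - 4$, giving $\deg F \geq g + 2$ when $k = 4$ and $\deg F \geq g$ when $k = 3$; both suffice to conclude $\lambda(F, W) = \deg F/(k - 2) > d/3$. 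The residual semistable subcase $k = 3$, $h^0(F) = 3$ I would treat directly by dualising $0 \to M \to \mathcal{O}_C^{\oplus 3} \to F \to 0$: semistability of $F$ together with $\mu(F) > 0$ (forced by $h^0(F) = 3$ via Clifford for rank two bundles) kills $H^0(F^*)$, so the dual sequence gives $h^0(\det F) \geq 3$, whence $\deg F = \deg \det F \geq \delta_2 > d/3$.

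The most delicate step will be the $r = 2$, $k = 4$ trivial-quotient sub-case, where one must check that the linear projection $W \to \mathbb{C}$ genuinely produces a $3$-dimensional subspace of $V \cap H^0(L)$, promoting $(L, W \cap H^0(L))$ to an honest subnet of $(E, V)$; this requires compatibility of the direct-sum decomposition $H^0(F) = H^0(L) \oplus \mathbb{C}$ with both $H^0(F) \hookrightarrow H^0(E)$ and $W \subseteq V$. Apart from this bookkeeping, the remainder of the proof is routine application of the Clifford-index and gonality bounds on a general curve.
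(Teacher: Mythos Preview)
Your proof is correct and follows essentially the same strategy as the paper's: rank-one subsystems are handled via the gonality bound $\delta_1 > g/2$, rank-two subsystems without trivial quotient via the Clifford-type bound (the definition of $\Cliff_2$ in the semistable case, Lemma~\ref{LNlem} in the non-semistable case), and the trivial-quotient case is ruled out using the no-subnet hypothesis. Your case organization by $k=\dim W$ is in fact slightly cleaner than the paper's reduction to ``$W=H^0(F)$'', and your explicit use of the splitting $F\cong L\oplus\mathcal{O}_C$ with the projection $W\to H^0(\mathcal{O}_C)$ makes the inclusion $W\cap H^0(L)\subseteq V$ transparent---so the step you flag as delicate is routine once the direct-sum decomposition is in hand.
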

\begin{proof}
If $(L, W)$ is a globally generated subpencil of $(E, V)$, and $\dim W=2$, then $\lambda(L, W)=d_L\geq \delta_1> \frac{g}{2}\geq \frac{d_E}{3}= \lambda(E, V)$. 
	
Assume $F$ is a  globally generated and rank two locally free subsheaf of $E$. since $\lambda(F, W)\geq \lambda(F)$, to prove $\lambda(F, W)\geq \lambda(E, V)$ it suffices to consider the case $W=H^0(F)$. Thus, we assume $h^0(F)=4$. 
Note that $F$ can not be an extension of the form $0\rightarrow L\rightarrow F\rightarrow \mathcal{O}_C \rightarrow 0$, because otherwise ($L, H^0(L)$) would be a rank one subsystem of $(E, V)$ with $h^0(L)\geq 3$.

\noindent Suppose $F$ is semistable. Since, as in Theorem \ref{Butlerconjthrm3}, we have $\mu(F)\leq 2g-2$. Hence $F$ contributes to $\Cliff_2(C)$, and by Mercat's Theorem for rank two bundles, \cite{BF}, we have
\begin{align}\label{cliffF1}
    \Cliff(F)\geq \Cliff_2(C)=\Cliff(C)\geq \frac{g-1}{2}-1.
    \end{align}
If $F$ is non-semistable and admits no trivial quotient, then the inequality (\ref{cliffF1}) holds by Lemma \ref{LNlem}.
In either cases we have $g+1\leq\deg(F)$.
Therefore, for each subspace $U \subseteq H^0(F)$ generating $F$ with $U\subset V\cap H^0(F)$, we have 
$$\lambda(E, V)\leq\frac{g}{2}<\frac{g+1}{2}\leq\frac{\deg(F)}{2}=\lambda(F) \leq\lambda(F, U).$$

	Assume finally that
	 $h^0(F)=3$, then we  have
	$\lambda(F)=d_F\geq \delta_1> \frac{g}{2}\geq \lambda(E, V)$, as required.	
\end{proof}

\begin{corollary}\label{Butlerconjthrm2}
	Suppose $C$ is a general curve and $d$ is an integer with $2\delta_2\leq d\leq\frac{3g}{2}$. Then, 
 we have 
	$$S_0(2, d, 5)\setminus N_0(2, d, 5)\subseteq T(2, d, 5).$$
\end{corollary}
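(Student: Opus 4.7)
The plan is to assemble Theorem \ref{Butlerconjthrm4} and Theorem \ref{Butlerconjthrm3} with a standard duality argument. Given $(E,V)\in S_0(2,d,5)\setminus N_0(2,d,5)$, the hypothesis $(E,V)\notin N_0(2,d,5)$ places us in the setting of Theorem \ref{Butlerconjthrm4}, which yields that $(E,V)$ is linearly stable. Theorem \ref{Butlerconjthrm3} then upgrades linear stability to slope stability of the Lazarsfeld--Mukai bundle $M_{E,V}$, and hence of its dual $M^*_{E,V}$.

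To conclude that $(E,V)\in T(2,d,5)$, one must verify that $(M^*_{E,V},V^*)$ lies in $S_0(3,d,5)$, i.e.\ that it is a generated coherent system which is $\alpha$-stable for small $\alpha>0$. For this I would dualize the defining sequence
$$0\to M_{E,V}\to V\otimes\mathcal{O}_C\to E\to 0$$
to obtain
$$0\to E^*\to V^*\otimes\mathcal{O}_C\to M^*_{E,V}\to 0,$$
which simultaneously displays $M^*_{E,V}$ as generated by $V^*$ and identifies $V^*$ with a subspace of $H^0(M^*_{E,V})$ via the vanishing $H^0(E^*)=0$. This vanishing follows from $\mu(E)=d/2>0$ together with the semistability of $E$ (inherited from its $\alpha$-stability for small $\alpha$), since on a general curve no line subbundle of $E^*$ can carry a section.

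For the $\alpha$-stability step, any proper subsystem $(F,W)$ of $(M^*_{E,V},V^*)$ either satisfies $\mu(F)<\mu(M^*_{E,V})$, in which case the strict slope inequality absorbs the bounded $\alpha$-correction once $\alpha$ is small enough, or else has $F$ of full rank and full degree forcing $F=M^*_{E,V}$ with $W\subsetneq V^*$, which makes the $\alpha$-inequality automatic. Hence $(M^*_{E,V},V^*)\in S_0(3,d,5)$, and by the definition of $T(2,d,5)$ we obtain $(E,V)\in T(2,d,5)$. The entire substance of the argument is carried by the two preceding theorems, so I anticipate no serious obstacle; this corollary is the bookkeeping that packages linear stability, slope stability of the Lazarsfeld--Mukai bundle, and the definition of $T(2,d,5)$ into a single inclusion.
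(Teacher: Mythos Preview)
Your proposal is correct and follows essentially the same approach as the paper, which proves the corollary in one line as ``a direct consequence of Theorem~\ref{Butlerconjthrm3} and Theorem~\ref{Butlerconjthrm4}.'' You have simply unpacked that consequence: Theorem~\ref{Butlerconjthrm4} gives linear stability, Theorem~\ref{Butlerconjthrm3} upgrades it to stability of $M_{E,V}$, and the verification that $(M^*_{E,V},V^*)\in S_0(3,d,5)$ (generation and $\alpha$-stability for small $\alpha$) is the routine bookkeeping the paper leaves implicit; note incidentally that $H^0(E^*)=0$ already follows from semistability and $\mu(E)>0$ alone, without invoking generality of $C$.
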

\begin{proof}
   This is a direct consequence of Theorem \ref{Butlerconjthrm3} and Theorem \ref{Butlerconjthrm4}.
\end{proof}

\begin{lemma}\label{closedness}
The locus $N_0(n, d,n+m)$ is closed in $S_0(n, d, n+m)$ and $N_0(2, d,5)$ does not fill any irreducible component of $S_0(2, d, 5)$.
\end{lemma}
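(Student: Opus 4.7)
The statement contains two assertions, which I would treat in turn.

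\textbf{Closedness.} My plan is to use a relative Grassmannian argument. Over $S_0(n,d,n+m)$, let $(\mathcal{E},\mathcal{V})$ denote the universal family and form the relative Grassmannian $\pi\colon\Gr(3,\mathcal{V})\to S_0(n,d,n+m)$ parametrizing $3$-dimensional subspaces of $\mathcal{V}$. On $\Gr(3,\mathcal{V})\times C$ the universal evaluation map $\mathcal{W}\otimes\mathcal{O}\to\mathcal{E}$ is defined, and the locus $Z\subseteq\Gr(3,\mathcal{V})$ where this map has fibrewise rank at most $1$ is closed, being cut out by the vanishing of the $2\times 2$ minors of the evaluation. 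Because $\pi$ is proper, $N_0(n,d,n+m)=\pi(Z)$ is closed in $S_0(n,d,n+m)$.

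\textbf{Non-filling.} I would adapt the dimension strategy used in Proposition \ref{prop(3,d,5)}. Any $(E,V)\in N_0(2,d,5)$ sits in an exact sequence of generated coherent systems
$$0\to(L,W)\to(E,V)\to(G,\bar W)\to 0,$$
with $(L,W)$ of type $(1,d_L,3)$ and $(G,\bar W)$ of type $(1,d-d_L,2)$. Since $(E,V)$ is $\alpha$-stable for small $\alpha$, $E$ is semistable; combined with the generality of $C$ this gives $\delta_2\leq d_L\leq d/2\leq 3g/4$, the left inequality coming from $h^0(L)\geq 3$ and the right from semistability together with the hypothesis $d\leq 3g/2$.

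On a general curve the Brill--Noether loci $G(1,d_L,3)$ and $G(1,d-d_L,2)$ attain their expected dimensions, so a parameter count yields
$$\dim N_0(2,d,5)\leq\beta(1,d_L,3)+\beta(1,d-d_L,2)+\dim\Ext^1_{\mathrm{coh}}\bigl((G,\bar W),(L,W)\bigr)-1.$$
I would bound $\dim\Ext^1_{\mathrm{coh}}$ via the standard long exact sequence relating coherent-system Ext to vector-bundle Ext, together with the coboundary $\Ext^1(G,L)\to\Hom(\bar W,H^1(L))$, $\eta\mapsto\delta_\eta|_{\bar W}$, whose kernel detects extensions in which $\bar W$ lifts to $V$. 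Riemann--Roch gives $\dim\Ext^1(G,L)=h^1(L\otimes G^*)=g-1+d-2d_L$ (using $h^0(L\otimes G^*)=0$ generically, since $\deg(L\otimes G^*)=2d_L-d\leq 0$). Combined with the identity
$$\beta(2,d,5)=\beta(1,d_L,3)+\beta(1,d-d_L,2)+C_{12}+C_{21}-1$$
recorded in the paper, the target becomes $\dim\Ext^1_{\mathrm{coh}}<C_{12}+C_{21}$ for every admissible $d_L$, which forces $\dim N_0(2,d,5)<\beta(2,d,5)$. Since every irreducible component of $S_0(2,d,5)$ has dimension at least $\beta(2,d,5)$, no such component can be contained in $N_0(2,d,5)$.

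The hard part will be verifying this strict Ext inequality uniformly in $d_L\in[\delta_2,3g/4]$. It rests on showing that the lifting condition $\delta_\eta|_{\bar W}=0$ really does impose the expected codimension in $\Ext^1(G,L)$, which I would deduce from the genericity of $L$ and $G$ on $C$; the range $2\delta_2\leq d\leq 3g/2$ is precisely what is needed to keep the pair $(L,G)$ in the Brill--Noether regime where these cohomological estimates are sharp.
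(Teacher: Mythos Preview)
Your closedness argument via the relative Grassmannian is correct and is essentially what the paper invokes (it simply cites the identical argument for subpencils in \cite{Ortetal}).

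For the non-filling assertion, your overall architecture---parametrise $N_0(2,d,5)$ by extensions of coherent systems and compare the resulting bound to $\beta(2,d,5)$---agrees with the paper's. But the argument is genuinely incomplete, and one step is not just unfinished but incorrect as stated. The whole content of the lemma lies in the strict inequality $\dim\Ext^1_{\mathrm{coh}}((G,\bar W),(L,W))<C_{12}+C_{21}$, which you leave as a claim ``resting on showing that the lifting condition $\delta_\eta|_{\bar W}=0$ really does impose the expected codimension.'' That last phrase is the problem: the coboundary $\Ext^1(G,L)\to\Hom(\bar W,H^1(L))$ is in general \emph{not} surjective---its cokernel is $\mathbb{H}^2_{21}$, which by Serre duality and the base-point-free pencil trick applied to the generating pencil $\bar W$ equals $H^0(K\otimes L^{-1}\otimes G^{-1})$, of dimension up to $2$ in the relevant range. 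So ``expected codimension'' fails, and you need a sharper count. You are also missing two structural observations: first, $d_L\le d/2\le 3g/4<\delta_3$ on a general curve forces $h^0(L)=3$, so the subnet is automatically complete; second, the boundary case $d=2\delta_2$ is numerically degenerate and must be handled separately.

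The paper's execution differs from your sketch. It splits according to $t:=h^0(G)$. For $t=2$ all sections of $G$ lift, so the extension class lies in the cokernel of the multiplication map $P_\gamma\colon H^0(G)\otimes H^0(K\otimes L^*)\to H^0(K\otimes G\otimes L^*)$; bounding $\dim\Coker P_\gamma$ directly (using $h^0(K\otimes L^{-1}\otimes G^{-1})\le 2$) yields the inequality for $d>2\delta_2$, while $d=2\delta_2$ is dispatched by citing \cite{CH2024} to show $N_0(2,2\delta_2,5)=\emptyset$. For $t\ge 3$ the direct bound on $\gamma$ is unavailable since $\bar W\subsetneq H^0(G)$, and the paper instead passes to the dual Lazarsfeld--Mukai sequence $0\to G\to M_{E,V}^*\to M_L^*\to 0$: here the sections of $W^*\subset V^*$ lift automatically, the class lies in $\Coker P_\eta$, and this cokernel is bounded by a diagram chase comparing $P_\eta$ to the multiplication map for a generating pencil in $H^0(M_L^*)$. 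Your direct approach through the long exact sequence of \cite{Bradlowetal} could be completed along similar lines, but not under the ``expected codimension'' hypothesis you propose.
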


\begin{proof}
A verbatim repetition of the proof of \cite[Lemma 4.5]{Ortetal} shows that $N_0(n, d, n+m)$ is closed in $S_0(n, d, n+m)$.

In order to prove the second statement, likewise in Proposition \ref{prop(3,d,5)}, we  shall prove 
\begin{align}\label{theineq3-1}
\dim X< \beta(2, d, 5),\end{align}
for any irreducible component $X\subset N_0(2, d, 5)$. If $(E, V)\in X$ is a general element, then $(E, V)$ sits in an exact sequence 
\begin{align}\label{secondboundseq2}
\gamma_2: 0\rightarrow (F, W)\rightarrow (E, V)\rightarrow (L, \overline{W})\rightarrow 0,\end{align}
where $(F, W)$ and $(L, \overline{W})$ are generated coherent systems of types $(1, d_F, 3)$ and $(1, d_L, 2)$, respectively. 

Since $h^0(F)\geq 3$, we have $d_F\ge \delta_2$ and as $E$ is semistable, we have $d_F\leq \frac{3g}{4}<\delta_3$, so $h^0(F)=3$. Then any subsystem $(F, W)$ appearing in the sequence \ref{secondboundseq2} is complete. Additionally, since $F\neq 2g^1_{\delta_1}$, we have $h^0(M_F^*)=h^0(F)=3$ by \cite[Theorem 2.4]{T07}. 

As in Proposition \ref{prop(3,d,5)}, we consider the sequence 
\begin{align}\label{exactseq32-1}
\eta_2: 0\rightarrow L\rightarrow M^*_{E,V}\rightarrow M^*_F\rightarrow 0,
\end{align}
and we have $\eta_2\in \Coker(P_{\eta_2})$, with $P_{\eta_2}$ the multiplication map associated to the extension $\eta_2$ in \ref{exactseq32-1}.
We also set 
$$t:=\min\{h^0(L): (L, \overline{W})\quad \!\!\!\!\mbox{appears in the sequence \ref{secondboundseq2} with}\quad \!\!\!\! (E,V)\in X\}.$$
Again, we have 
\begin{align}\label{111-2}
    \dim X\leq \beta(1, d_F, 3)+\beta(1, d_L, t)+\dim \Coker(P_{\eta_2})-1.
\end{align}
\textit{First subcase:} If $t=2$, i.e., for some rank one coherent system $(L, \overline{W})$ appearing in the sequence \ref{secondboundseq2}, we have $\overline{W}=H^0(L)$, then $V=H^0(E)$ and the sequence \ref{secondboundseq2} is a sequence of complete coherent systems. Furthermore, all sections of $H^0(L)$ lift to $E$. Therefore, the extension $\gamma_2$ in \ref{secondboundseq2} belongs to $\Coker P_{\gamma_2}$, where $P_{\gamma_2}$ is the multiplication map associated with this sequence and we have
\begin{align}
\dim X\leq \beta(1,d_F,3)+\beta(1,d_L,2)+\dim \Coker (P_{\gamma_2})-1.
\end{align}
Therefore, \ref{theineq3-1} will hold if  
\begin{align}\label{-131}
\dim \Coker (P_{\gamma_2})<C_{21}+C_{12},\end{align}
which in this case $C_{21}+C_{12}=3d_L+2d_F-3g-9$.
Since $\deg (K\otimes F^*\otimes L^*) < \delta_2$,
$$\dim \Ker (P_{\gamma_2})=h^0(K\otimes L^*\otimes F^*)\leq 2.$$
Hence,
$$\dim \Coker P_{\gamma_2}\leq  h^0(K\otimes L \otimes F^*) -2h^0(K\otimes F^*)+2=h^0(K\otimes L \otimes F^*)-2\cdot(g-d_F+2)+2.$$
Since $\deg(F\otimes L^*)\leq 0$ and $L\neq F$, so $h^0(F\otimes L^*)=0$. 
 Thus,
$$\dim \Coker P_{\gamma_2}\leq (g+d_L-d_F-1)-2\cdot(g-d_F+2)+2=
d_F+d_L-g-4=d-g-3.$$
So we have to prove  $d-g-3< 3d_L+2d_F-3g-9$
which is equivalent to 
\begin{align}\label{-7}
2g+6< d_L+d,\end{align}
If $d=2\delta_2$, then it is shown in the proof of \cite[Theorem D]{CH2024} that a coherent system $(E, V)\in S_0(2,d, 5)$ does not admit any subnet. Hence $N_0(2, 2\delta_2, 5)=\varnothing$, therefore we assume $d\geq 2\delta_2+1$. Now, the inequality \ref{-7} follows from  $d+d_L\geq 3\delta_2+1 >2g+6 $. 

\textit{Second subcase:} $t\geq 3$.  
Take a coherent system $(E,V)\in X$, admitting a sequence as \ref{secondboundseq2} with $h^0(L)=t$. Consider the multiplication map
$$P_{\eta_2}: H^0(M^*_F)\otimes H^0(K\otimes L^*) \rightarrow  H^0(K\otimes M^*_F\otimes L^*),$$
associated to the exact sequence \eqref{exactseq32-1}.
Since $\det(M_F^*)=F$, we have the following exact sequence 
\begin{align}\label{-4}
0\rightarrow \mathcal{O}_C\rightarrow M_F^* \stackrel{\theta}{\rightarrow} F\rightarrow 0.    
\end{align}
 Likewise in \cite{Baj2020},  we derive the following diagram 
{\small
	\begin{align}\label{N0(2,d,5)diagram}
		\xymatrix{
			0\ar[r]&H^0(K\otimes L^*) \ar[d]^{=} \ar[r],&H^0(K\otimes L^*)\otimes H^0(M_F^*)\ar[d]^{P_{\eta_2}} \ar[r] &H^0(K\otimes L^*)\otimes H  \ar[d]^{P}\ar[r]&0\\
			0\ar[r] &H^0(K\otimes L^*)\ar[r]^{f_1}& H^0(K\otimes L^*\otimes M_F^*)\ar[r]^{f_2}& H^0(K\otimes F\otimes L^*),&
		}
	\end{align}
}
in which $H:=\im(H^0(\theta))$. See also \cite{ABH}.
The Snake Lemma applied to the Diagram \ref{N0(2,d,5)diagram} gives 
$$\dim \Ker(P_{\eta_2})\leq \dim \Ker(P).$$
Consider now that $\dim H=2$ and $F$ is generated by $H$. So, by base point free pencil trick, we obtain 
$$\Ker(P)=H^0(K\otimes L^*\otimes F^*).$$
Therefore, $\dim \Ker(P)\leq 2$, so $\dim \Ker(P_{\eta_2})\leq 2$
 and 
$\dim \im (P_{\eta_2})\geq 3\cdot(h^0(L)+g-d_L -1)-2.$ 
Hence,
\begin{align}\label{-3}
\dim \Coker P_{\eta_2} \leq  
h^0(K\otimes M_F^* \otimes L^*) -3\cdot (g-d_L +2)+2.
\end{align}

Now, by \ref{111-2}, we shall  prove 
$$ 
\dim \Coker(P_{\eta_2})<[\beta(1,d_L,2)-\beta(1,d_L,t)]+(3d_L+2d_F-3g-9),$$
for $t\geq 3$, and by \ref{-3}, this will hold if 
$$ h^0(K\otimes M_F^* \otimes L^*) -3\cdot (g-d_L +2)+2< [\beta(1,d_L,2)-\beta(1,d_L,t)]+(3d_L+2d_F-3g-9)
$$
which is equivalent to 
\begin{align}\label{-5-1}
  h^0(K\otimes M_F^* \otimes L^*)< [\beta(1,d_L,2)-\beta(1,d_L,t)]+2d_F -5=(t-2)(g-d_L+1)+t(t-2)+2d_F -5.
  \end{align}
Now, in the exact sequence 
$$0\rightarrow K\otimes L^*\rightarrow K\otimes L^*\otimes M_F^* \rightarrow  K\otimes L^*\otimes F\rightarrow 0,$$
obtained from \ref{-4}, we have $h^0(K\otimes L^*\otimes F)\leq g$,  because $\deg(F\otimes L^*)\leq 0$.
Therefore, 
\begin{align}\label{-6-1}
h^0(K\otimes L^*\otimes M_F^*)\leq g+(t+g-d_L -1).
\end{align}
Summarizing, by \ref{-5-1} and \ref{-6-1} we have to prove  
$$g<(t-3)(g-d_L+1)+(t-1)(t-2)+2d_F-5.$$ 
Since $t\geq 3$ and $g-d_L+1\geq 0$, it suffices to prove $$(t-1)(t-2)+2d_F-5>g,$$ which by $d_F\geq \delta_2$ is immediate.

\end{proof}

\begin{corollary}
If non-empty, then $T_0(2,d,5)\subseteq S_0(2, d, 5)$ is dense. 
\end{corollary}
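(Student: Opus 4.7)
The plan is to assemble the corollary from the immediately preceding results, combining the closedness/non-fillingness statement of Lemma \ref{closedness} with the containment of Corollary \ref{Butlerconjthrm2}. The main idea is that linear stability (which is what ultimately certifies membership in $T(2,d,5)$ via Theorem \ref{Butlerconjthrm3}) holds generically because the only obstruction — admitting a subnet — occurs on a proper closed locus.

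First I would invoke Lemma \ref{closedness} to observe that $N_0(2,d,5)$ is a closed subscheme of $S_0(2,d,5)$ and, moreover, does not fill up any irreducible component of $S_0(2,d,5)$. Consequently, its complement $U := S_0(2,d,5)\setminus N_0(2,d,5)$ is open in $S_0(2,d,5)$ and, assuming $S_0(2,d,5)$ is non-empty, meets every irreducible component in a dense open subset; hence $U$ is itself dense in $S_0(2,d,5)$.

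Next, I would apply Corollary \ref{Butlerconjthrm2}, which yields the inclusion $U \subseteq T(2,d,5)$. Combined with the previous step, this shows that $T(2,d,5)$ contains a dense open subset of $S_0(2,d,5)$, and therefore $T(2,d,5)$ is itself dense in $S_0(2,d,5)$. The hypothesis that $T_0(2,d,5)$ be non-empty is only needed to rule out the vacuous situation where $S_0(2,d,5)$ itself is empty; once we know there is at least one coherent system in $T(2,d,5)$, the containment $U \subseteq T(2,d,5)$ together with the density of $U$ delivers the conclusion.

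There is essentially no hard step remaining at this point, since all the substantive content — linear stability outside the subnet locus (Theorem \ref{Butlerconjthrm4}), equivalence of linear stability and semistability of the Lazarsfeld--Mukai bundle (Theorem \ref{Butlerconjthrm3}), and the dimension estimate on $N_0(2,d,5)$ (Lemma \ref{closedness}) — has already been done. The only potential subtlety is to make sure one interprets ``dense'' component-by-component: since $N_0(2,d,5)$ does not contain any irreducible component of $S_0(2,d,5)$, its complement is dense in each component, so no component of $S_0(2,d,5)$ escapes the argument. This is the cleanest formulation and essentially mirrors the way \cite{Ortetal} concludes Butler's conjecture for type $(2,d,4)$ from the analogous closed subpencil locus.
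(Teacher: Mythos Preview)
Your proof is correct and follows essentially the same route as the paper: the paper's proof cites Theorem \ref{Butlerconjthrm4}, Theorem \ref{Butlerconjthrm3}, and Lemma \ref{closedness}, while you cite Lemma \ref{closedness} together with Corollary \ref{Butlerconjthrm2}, which is itself just the packaging of those two theorems. The component-by-component density argument you spell out is exactly the intended reading.
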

\begin{proof}
This follows from Theorem \ref{Butlerconjthrm4}, Theorem  \ref{Butlerconjthrm3} and Lemma \ref{closedness}.  
\end{proof}

\subsection{Non-Emptiness and Butler's Conjecture}

We recall the following Lemma from \cite{CH2024}, which is a key tool in producing rank two vector bundles with prescribed number of sections.
\begin{lemma}[{\normalfont\cite[Lemma 4.1]{CH2024}}]\label{GeorgeAbel}
 Let $L_1$ and $L_2$ be generated line bundles over $C$. For $i\in \{1,2\}$, write
 $\deg L_i =: l_i$ and $h^0(L_i) =: k_i$. Suppose that
\begin{align}\label{GeorgeAbel1} 
 l_2 > k_1k_2 +(k_2-1)(g-1-l_1).\end{align}
 Then, there exists a nontrivial extension $0 \rightarrow L_1 \rightarrow E \rightarrow L_2 \rightarrow 0$ in which all sections of
 $L_2$ lift to $E$. In particular, the coherent system $(E,H^0(E))$ is of type $(2,l_1 + l_2, k_1 + k_2)$ and generated.
\end{lemma}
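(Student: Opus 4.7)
\textbf{Proof plan for Lemma \ref{GeorgeAbel}.}

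The plan is to produce a nonzero extension class $e \in \mathrm{Ext}^1(L_2, L_1) \cong H^1(L_1 \otimes L_2^*)$ that lies in the kernel of the natural connecting/multiplication map, so that the coboundary $H^0(L_2) \to H^1(L_1)$ of the resulting extension vanishes. Recall that for an extension
$$0 \to L_1 \to E \to L_2 \to 0$$
with class $e$, a section $s \in H^0(L_2)$ lifts to $H^0(E)$ if and only if $\delta_e(s) = e \cdot s = 0$ in $H^1(L_1)$, where the cup product gives a bilinear pairing
$$\mu : H^1(L_1 \otimes L_2^*) \otimes H^0(L_2) \longrightarrow H^1(L_1).$$
Thus all sections of $L_2$ lift precisely when $e$ lies in the kernel of the adjoint map
$$\widetilde{\mu} : H^1(L_1 \otimes L_2^*) \longrightarrow \mathrm{Hom}\bigl(H^0(L_2), H^1(L_1)\bigr).$$

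The first step is to show that $\widetilde{\mu}$ fails to be injective under hypothesis \eqref{GeorgeAbel1}, via a straightforward dimension count. Using Riemann--Roch together with $h^0(L_1 \otimes L_2^*) = 0$ (which follows from $l_2 > l_1$, itself a consequence of \eqref{GeorgeAbel1}), one computes
$$h^1(L_1 \otimes L_2^*) = l_2 - l_1 + g - 1, \qquad h^1(L_1) = k_1 - l_1 + g - 1,$$
so $\dim \mathrm{Hom}(H^0(L_2), H^1(L_1)) = k_2(k_1 - l_1 + g - 1)$. Hypothesis \eqref{GeorgeAbel1} is then precisely the inequality
$$h^1(L_1 \otimes L_2^*) \;>\; \dim \mathrm{Hom}\bigl(H^0(L_2), H^1(L_1)\bigr),$$
which forces $\ker \widetilde{\mu} \neq 0$. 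Any nonzero $e \in \ker \widetilde{\mu}$ gives a nontrivial extension $0 \to L_1 \to E \to L_2 \to 0$ in which the coboundary $H^0(L_2) \to H^1(L_1)$ vanishes identically, so the long exact sequence in cohomology breaks into a short exact sequence $0 \to H^0(L_1) \to H^0(E) \to H^0(L_2) \to 0$, yielding $h^0(E) = k_1 + k_2$.

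The final step is to verify that $E$ is globally generated. At any point $p \in C$, the fiber sequence is $0 \to L_1|_p \to E|_p \to L_2|_p \to 0$. Sections of $L_1$, viewed inside $H^0(E)$, generate $L_1|_p$ because $L_1$ is generated; sections of $L_2$, which all lift by construction, map onto $L_2|_p$ because $L_2$ is generated. Combining these gives generation of $E|_p$ by $H^0(E)$, so $(E, H^0(E))$ is a generated coherent system of the stated type $(2, l_1 + l_2, k_1 + k_2)$.

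No step is particularly delicate: the whole argument is a dimension count on the obstruction map plus a standard fiber-wise generation check. The only point where one must be careful is verifying $h^0(L_1 \otimes L_2^*) = 0$ so that the Riemann--Roch computation of $h^1(L_1 \otimes L_2^*)$ is valid, and noting that the inequality \eqref{GeorgeAbel1} is \emph{strict}, which is what is needed to guarantee a \emph{nonzero} element in the kernel (and hence a nontrivial extension).
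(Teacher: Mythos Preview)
The paper does not give its own proof of this lemma; it is quoted verbatim from \cite[Lemma 4.1]{CH2024}. Your argument is the standard one and is correct. One minor point: your parenthetical claim that $l_2 > l_1$ follows from \eqref{GeorgeAbel1} is not justified in general (it can fail, for instance, when $h^1(L_1)=0$), but it is also not needed --- Riemann--Roch gives $h^1(L_1\otimes L_2^*) \geq l_2 - l_1 + g - 1$ regardless of the value of $h^0(L_1\otimes L_2^*)$, and this lower bound is exactly what the dimension comparison requires.
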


\begin{lemma}\label{nonemptiness}
\label{Butlerconjthrm5}
(i)	The locus $S_0(2, d, 5)$ is nonempty for $d\geq 2\delta_2$.

\noindent (ii)  The locus $T(2, d, 5)$ is nonempty whenever $ 2\delta_2\leq d \leq \frac{3g}{2}$.
\end{lemma}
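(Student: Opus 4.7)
Part (ii) is a direct consequence of part (i) and the earlier results of this section. Once part (i) yields non-emptiness of $S_0(2,d,5)$, Lemma \ref{closedness} implies that $N_0(2,d,5)$ is a proper closed subset of every irreducible component of $S_0(2,d,5)$; hence $S_0(2,d,5)\setminus N_0(2,d,5)$ is a non-empty open subset. By Corollary \ref{Butlerconjthrm2}, this complement is contained in $T(2,d,5)$, proving (ii).

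The core task is therefore (i): exhibiting, for every $d\ge 2\delta_2$ and small $\alpha>0$, a generated $\alpha$-stable coherent system of type $(2,d,5)$. The boundary case $d=2\delta_2$ is already covered by the construction in the proof of \cite[Theorem D]{CH2024}. For $d>2\delta_2$, the plan is to apply Lemma \ref{GeorgeAbel} with $(k_1,k_2)=(2,3)$. Concretely, take $L_1$ to be a generic element of $W^1_{l_1}$ with $l_1\in[\delta_1,\delta_2-1]$ (so that $h^0(L_1)=2$ and $L_1$ is base-point free on the general curve $C$ by a Brill--Noether dimension count), and take $L_2$ a generic element of $W^2_{l_2}$ with $l_2=d-l_1$, which is again generated with $h^0(L_2)\ge 3$. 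Choose $l_1$ close to $\delta_2-1$; then the inequality $l_2>2g+4-2l_1$ of Lemma \ref{GeorgeAbel} becomes $d+l_1>2g+4$, which holds because $d\ge 2\delta_2+1$ and $\delta_2\ge 2g/3+2$ in our convention range.

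Lemma \ref{GeorgeAbel} now produces a non-trivial extension
$$0\to L_1\to E\to L_2\to 0$$
in which every section of $L_2$ lifts to $E$. Set $V:=H^0(L_1)\oplus V_2'$, where $V_2'$ is a generic three-dimensional subspace of the lifted image of $H^0(L_2)$ generating $L_2$; then $(E,V)$ is a generated coherent system of type $(2,d,5)$. For a generic extension class the Segre invariant of $E$ equals $s_1(E)=l_2-l_1>0$, so $L_1$ is the maximal line subbundle and $\mu(L_1)=l_1<d/2=\mu(E)$, which forces $E$ to be stable. Consequently every proper coherent subsystem $(F,W)\le(E,V)$ has $\mu(F)<\mu(E)$, and for $\alpha>0$ sufficiently small the slope term dominates the perturbation $\alpha\dim W$, yielding $\mu_\alpha(F,W)<\mu_\alpha(E,V)$; hence $(E,V)\in S_0(2,d,5)$.

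The main obstacle is to make this construction go through uniformly in $d$. For large $d$ one may have $l_2\ge\delta_3$ and hence $h^0(L_2)>3$, in which case $V_2'$ must be chosen carefully so as to generate $L_2$ and so that no rank-one subsystem of $(E,V)$ obtained from a sub-line-bundle of $L_2$ destabilises $(E,V)$. One also has to verify that the prescribed generic choices of $L_1$, $L_2$, the extension class and $V_2'$ are all simultaneously achievable; this uses Brill--Noether theory on the general $C$ to control the loci $W^r_l$ and their base-point strata. The most delicate case remains $d=2\delta_2$, where $l_1=l_2$ forces $s_1(E)=0$ and $E$ is only strictly semistable, so that $\alpha$-stability of $(E,V)$ must be argued via the finer analysis in \cite{CH2024} rather than from stability of $E$ alone.
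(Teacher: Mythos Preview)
Your argument for (ii) is correct and takes a slightly different route from the paper's. Rather than verifying directly that the bundle constructed in (i) lies in $T(2,d,5)$, you deduce $T(2,d,5)\neq\emptyset$ abstractly from $S_0(2,d,5)\neq\emptyset$ via Lemma \ref{closedness} and Corollary \ref{Butlerconjthrm2}. The paper instead observes that its constructed $(E,H^0(E))$ has no line subbundle with three or more sections, so it lies outside $N_0(2,d,5)$ and hence in $T(2,d,5)$ by Theorems \ref{Butlerconjthrm4} and \ref{Butlerconjthrm3}.

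For (i) there is a genuine gap. You choose $l_1\in[\delta_1,\delta_2-1]$ and $l_2=d-l_1$, so that $l_2-l_1$ may be large, and then assert that for a \emph{generic} extension class $E$ is stable with $s_1(E)=l_2-l_1$. But Lemma \ref{GeorgeAbel} does not hand you a generic extension; it only guarantees \emph{some} non-trivial class inside the linear subspace of $\Ext^1(L_2,L_1)$ on which all sections of $L_2$ lift. You give no argument that this subspace meets the stable locus, and for unbalanced $(l_1,l_2)$ there is no structural reason it should. The complication you flag when $h^0(L_2)>3$ (forcing $V\subsetneq H^0(E)$ and requiring separate control of rank-one subsystems arising from $L_2$) only compounds this.

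The paper's construction avoids both difficulties by a single device: it takes $l_1=\delta_2+\beta$ and $l_2=\delta_2+\beta+\epsilon$ with $\epsilon\in\{0,1\}$, so $|l_2-l_1|\leq 1$. Then for \emph{any} non-trivial extension, every line subbundle $M\subset E$ satisfies $\deg M\leq l_1$ (either $M\subseteq L_1$, or $M$ maps to $L_2$ non-isomorphically and $\deg M\leq l_2-1\leq l_1$) and $h^0(M)\leq 2$ (since $L_2$ is generated with $h^0(L_2)=3$, proper subsheaves have at most two sections). This gives $\mu_\alpha(M,W)\leq l_1+2\alpha<\mu_\alpha(E,H^0(E))$ for all $\alpha>0$, with no appeal to genericity or Segre invariants. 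The balanced choice $l_1\approx l_2$ is the idea your plan is missing.
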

\begin{proof}
(i) We apply an argument analogous to that used in the proof of \cite[Theorem D]{CH2024}.

\noindent Let $\beta\geq 0$ be an integer. 
Since $C$ is a general, there are globally generated line bundles 
$L_1\in W^1_{\delta_2+\beta}$ and $L_2\in W^2_{\delta_2+\beta+\epsilon}$ with $\epsilon\in \{0, 1\}$. 

 As the locus $S_0(2, d, 5)$ is proved to be non-empty for $d=2\delta_2$ in \cite[Theorem D]{CH2024}, we prove non-emptiness of $S_0(2, d, 5)$ for $d\geq 2\delta_2+1$.
If $\beta=0$, then we take $\epsilon=1$ and observe that
$$\delta_2+1 > 6+2\cdot (g-1-\delta_2).$$
Therefore, by Lemma \ref{GeorgeAbel},  
there exists a nontrivial extension $e: 0\rightarrow L_1\to E\to L_2\to 0$, with $L_1\in W^1_{\delta_2}$ and $L_2\in W^2_{\delta_2+1}$ such that all sections of $L_2$ lift to $E$.

If $\beta\geq 1$, then the inequality 
$$\delta_2+\beta+\epsilon > 6+2\cdot (g-1-\delta_2-\beta),$$
holds for all $g$.  
 So, again by Lemma \ref{GeorgeAbel}, 
there exists a nontrivial extension $e: 0\rightarrow L_1\to E\to L_2\to 0$ 
such that all sections of $L_2$ lift to $E$. Since $L_1$ and $L_2$ are globally generated, so is the coherent system $(E, H^0(E))$. 
Observe that for an arbitrary invertible subsheaf $M$ of $E$, either $L_1\cap M=0$ or $L_1\cap M$ is a subsheaf of $L_1$ and $M$ is actually a subsheaf of $L_1$, so $h^0(M)\leq 2$. In the former case there is a 
non-zero map $M \rightarrow L_2$.
Since the extension $(e)$ is non-trivial, 
$\deg M \leq \deg L_2-1 = \delta_2 + \beta +\epsilon - 1 $ (otherwise $M\simeq L_2$). 
Hence $\deg M \leq  \delta_2 + \beta$. 
Since $L_2$ is globally generated, its non-trivial subsheaves have at most two sections.  So $h^0(M) \leq 2 $, for any  non-trivial subsheaf $M$ of $E$. Therefore, 
$$\mu_\alpha(M, W)\leq \delta_2+\beta+2\alpha< \delta_2+\beta+\frac{5\alpha}{2}=\mu_\alpha(E, H^0(E)),$$
for $W\leq H^0(M)$, implying that $(E, H^0(E))$ is $\alpha$-stable for any $\alpha >0$.  
Consequently $S_0(2, d, 5)$ would be non-empty for $d$ in the given range.

(ii) The bundles $E$ constructed in part (i) do not admit any invertible sub-sheaf with at least $3$ sections. Hence, they are linearly stable  by Lemma \ref{Butlerconjthrm4}. Therefore, $M^*_E$ is stable by Lemma \ref{Butlerconjthrm3}, so $M^*_E$ is $\alpha$-stable for all $\alpha>0$. 
We conclude $(E, H^0(E))\in T(2, d, 5)$, by definition. 
\end{proof}

\begin{lemma}\label{nonemptiness2}
 The locus $T(3, d, 5)$ is non-empty for $ 2\delta_2\leq d\leq \frac{3g}{2}$. 
\end{lemma}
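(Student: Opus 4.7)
The plan is to deduce this non-emptiness directly from Lemma \ref{nonemptiness}(ii) using the duality involution
$$D : (E,V)\longmapsto (M_{E,V}^{*},V^{*}),$$
which satisfies $D\circ D = \mathrm{id}$, as recorded immediately before the equivalence \eqref{equivalence}. By the very definitions given in subsection \ref{butlersection},
$$T(2,d,5)=\{(E,V)\in S_0(2,d,5) : D(E,V)\in S_0(3,d,5)\},$$
$$T(3,d,5)=\{(F,U)\in S_0(3,d,5) : D(F,U)\in S_0(2,d,5)\},$$
so the two loci are exchanged by $D$, and in particular one is non-empty if and only if the other is. Hence the claim should follow with no extra geometric input once $T(2,d,5)$ is known to be non-empty.

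More concretely, first I would invoke Lemma \ref{nonemptiness}(ii), which provides, in the range $2\delta_2\leq d\leq \tfrac{3g}{2}$, a generated coherent system $(E,V)\in T(2,d,5)$; by construction this $(E,V)$ is $\alpha$-stable for small $\alpha$, lies in $S_0(2,d,5)$, and its image $D(E,V)=(M_{E,V}^{*},V^{*})$ lies in $S_0(3,d,5)$. Next I would set $(F,U):=D(E,V)$, which is automatically of type $(3,d,5)$ and generated, and apply $D$ once more: using $D\circ D=\mathrm{id}$ one has $D(F,U)=(E,V)\in S_0(2,d,5)$. By the defining property of $T(3,d,5)$ this shows $(F,U)\in T(3,d,5)$, establishing non-emptiness.

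Since the argument is purely formal, the only step that could conceivably cause trouble is the verification that $D$ is a genuine involution on the relevant loci (that no auxiliary identification is needed beyond the standard one). This is precisely the content of the statement preceding \eqref{equivalence}, so no obstacle is expected; in particular I do not need to revisit the dimension estimates or any of the Lazarsfeld--Mukai machinery from the previous subsections.
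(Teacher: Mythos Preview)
Your proposal is correct and follows essentially the same approach as the paper: the paper's proof simply cites the equivalence \eqref{equivalence} together with Lemma \ref{nonemptiness}(ii), and your argument is precisely the unpacking of that equivalence via the involution $D$.
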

\begin{proof}
 This is immediate by \ref{equivalence} and Lemma \ref{nonemptiness}(ii).
\end{proof}
\begin{theorem}\label{Butlerfor2d5}
Suppose $C$ is a general curve and $2\delta_2\leq d\leq \frac{3g}{2}$. Then, the Butler conjecture holds non-trivially for coherent systems of type $(2, d, 5)$.
\end{theorem}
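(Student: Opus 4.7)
The plan is to assemble the ingredients already established in the preceding subsections into the general framework recalled in Section \ref{butlersection}. Concretely, the characterization of Butler's conjecture (point (4) in the background discussion) says that the conjecture holds non-trivially for type $(2,d,5)$ if and only if $S_0(2,d,5)\neq\emptyset$, the locus $T(2,d,5)$ is dense in $S_0(2,d,5)$, and the locus $T(3,d,5)$ is dense in $S_0(3,d,5)$. I would verify each of these three items in turn.

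First, non-emptiness: Lemma \ref{nonemptiness}(i) gives $S_0(2,d,5)\neq\emptyset$ in the full range $d\geq 2\delta_2$, and in particular in our range $2\delta_2\leq d\leq 3g/2$. Lemma \ref{nonemptiness}(ii) (respectively Lemma \ref{nonemptiness2}) gives $T(2,d,5)\neq\emptyset$ (respectively $T(3,d,5)\neq\emptyset$) in this same range, so both candidate dense subsets actually contain points.

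Second, density of $T(2,d,5)$ in $S_0(2,d,5)$: by Lemma \ref{closedness}, the locus $N_0(2,d,5)$ is closed in $S_0(2,d,5)$ and does not fill any irreducible component, so the open complement $S_0(2,d,5)\setminus N_0(2,d,5)$ is dense in every component. By Corollary \ref{Butlerconjthrm2}, this open complement lies inside $T(2,d,5)$, so $T(2,d,5)$ is dense in $S_0(2,d,5)$.

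Third, density of $T(3,d,5)$ in $S_0(3,d,5)$: the analogous argument applies on the $m=3$ side. Corollary \ref{cor(3,d,5)} gives $S_0(3,d,5)\setminus N^2_0(3,d,5)\subseteq T(3,d,5)$; $N^2_0(3,d,5)$ is closed in $S_0(3,d,5)$ (by the same verbatim argument as in Lemma \ref{closedness}, which I would simply invoke), and Proposition \ref{prop(3,d,5)} shows that on every irreducible component $X\subseteq \overline{N^2_0(3,d,5)}$ either a general element is linearly stable (hence already in $T(3,d,5)$ by Theorems \ref{Butlerconjthrm3-1} and \ref{Butlerconjthrm4-1}) or $\dim X<\beta(3,d,5)\leq \dim S_0(3,d,5)$, so $N^2_0(3,d,5)$ does not fill any component. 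Hence $T(3,d,5)$ is dense in $S_0(3,d,5)$. Combining the three items with the characterization of Conjecture \ref{Butconjecture} yields the statement. There is no genuine obstacle here, since the delicate estimates were already carried out in Proposition \ref{prop(3,d,5)} and Lemma \ref{closedness}; the only care needed is to check that the hypotheses $2\delta_2\leq d\leq 3g/2$ and the genus conventions made at the start of Section 5 are simultaneously satisfied by all invoked results, which they are by construction.
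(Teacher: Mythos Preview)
Your proposal is correct and follows essentially the same assembly as the paper's own proof: you invoke Lemma \ref{nonemptiness} and Corollary \ref{Butlerconjthrm2} together with Lemma \ref{closedness} for the $(2,d,5)$ side, and Corollary \ref{cor(3,d,5)}, Proposition \ref{prop(3,d,5)} (plus Theorems \ref{Butlerconjthrm3-1}, \ref{Butlerconjthrm4-1}) together with Lemma \ref{nonemptiness2} for the $(3,d,5)$ side, exactly as the paper does. Your write-up is in fact more explicit than the paper's terse version about how Proposition \ref{prop(3,d,5)} handles components of $S_0(3,d,5)$ that might lie inside $\overline{N^2_0(3,d,5)}$, which is a helpful clarification rather than a different route.
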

\begin{proof}
It follows from Lemma \ref{closedness}, Corollary \ref{Butlerconjthrm2} and Lemma \ref{nonemptiness} that $T(2, d, 5)$ is non-empty and dense in $S_0(2, d, 5)$. 

 According to Lemma \ref{Butlerconjthrm3-1} and Proposition \ref{Butlerconjthrm4-1}, a general element of any irreducible component $X\subseteq S_0(3, d, 5)$ belongs to $T(3, d, 5)$. This together with Lemma \ref{nonemptiness2} implies that $T(3, d, 5)$ is non-empty and dense in $S_0(3, d, 5)$. 
\end{proof}

\begin{remark}
Our result on the Butler's
conjecture extends \cite[Theorem D]{CH2024} to a large range of degrees. The approach is also different, we use the linear equivalance of the coherent systems of type $(2,d,5)$ in that range and analyze the components of $S_0(2,d,5)$ and $S_0(3,d,5)$ from linear stability point of view, whereas in \cite{CH2024} the authors establish that, in certain range of the degree, the coherent systems of type $(2,d,5)$ is $\alpha$-stable for a large $\alpha$ in order to prove that the kernel bundle is semistable.
\end{remark}

\textit{Acknowledgements.} 
During this project, the first-named author was supported by a George Forster Fellowship at Humboldt-Universität zu Berlin. He is deeply grateful to Gavril Farkas and the Alexander von Humboldt Foundation for their generous support. 
He also visited George Harry Hitching at OsloMet University, and wishes to thank him for his warm hospitality and for the highly enjoyable and insightful conversations on this topic, as well as Abel Castorena for helpful discussions.

\end{document}